\documentclass[11pt]{article}
\usepackage[singlespacing]{setspace}
\usepackage{amsmath, amssymb, amscd, amsthm, amsfonts}
\usepackage{mathtools}
\usepackage[mathscr]{euscript}
\usepackage{mathrsfs}
\usepackage{enumitem}
\usepackage{graphicx}
\usepackage{float}
\usepackage{tikz}
\setcounter{secnumdepth}{5} 
\setcounter{tocdepth}{5}
\usepackage{tocloft}
\usepackage{psfrag}
\usepackage{caption}
\usepackage{subcaption}
\RequirePackage{doi}
\usepackage{hyperref}
\hypersetup{
colorlinks = true,
urlcolor   = blue,
citecolor  = blue,
linkcolor  = blue,
}
\usepackage{xcolor}
\usepackage{scalerel,stackengine}
\usepackage[colorinlistoftodos]{todonotes}
\usepackage{authblk}
\usepackage{fancyhdr}

\pagestyle{fancy}
\fancyhead[L]{\textit{Nonuniqueness of $L^p$ solutions of 2D Euler} }
\fancyhead[R]{E. Bru\`e, M. Colombo and A. Kumar}

\newlength\FHoffset
\setlength\FHoffset{0cm}

\addtolength\headwidth{2\FHoffset}

\fancyheadoffset{\FHoffset}

\newlength\FHleft
\newlength\FHright

\setlength\FHleft{0cm}
\setlength\FHright{1cm}

\setlength{\headsep}{5in}
\renewcommand{\headrulewidth}{1.0pt} 
\newbox\FHline
\setbox\FHline=\hbox{\hsize=\paperwidth%
  \hspace*{\FHleft}%
  \rule{\dimexpr\headwidth-\FHleft-\FHright\relax}{\headrulewidth}\hspace*{\FHright}%
}

\oddsidemargin 0pt
\evensidemargin 0pt
\marginparwidth 40pt
\marginparsep 10pt
\topmargin -20pt
\headsep 25pt
\textheight 8.7in
\textwidth 6.65in
\linespread{1.2}

\title{\textbf{Flexibility of Two-Dimensional Euler Flows with Integrable Vorticity}}



\author[]{\large \textbf{Elia Bru\`e}\footnote{Department of Decision Sciences, Universit\`a Bocconi, Milano, Italy. \textit{Email:}  \href{mailto:elia.brue@unibocconi.it}{elia.brue@unibocconi.it}. } \;
\textbf{\&} \;
\textbf{Maria Colombo}\footnote{EPFL, Station 8,
CH-1015 Lausanne,
Switzerland. \textit{Email:}  \href{mailto:maria.colombo@epfl.ch}{maria.colombo@epfl.ch}. } \; 
\textbf{\&} \; 
\textbf{Anuj Kumar}\footnote{Department of Mathematics, University of California Berkeley, CA 94720, USA. \textit{Email:}  \href{mailto:anujkumar@berkeley.edu}{anujkumar@berkeley.edu}. }}
\date{}

\newtheoremstyle{mystyle}
  {}
  {}
  {\itshape}
  {}
  {\bfseries}
  {.}
  { }
  {\thmname{#1}\thmnumber{ #2}\thmnote{ (#3)}}

\theoremstyle{mystyle}
\newtheorem{theorem}{Theorem}[section]
\newtheorem{proposition}[theorem]{Proposition}
\newtheorem{lemma}{Lemma}[section]

\newtheorem{conjecture}[theorem]{Conjecture}

\newtheorem{definition}{Definition}[section]

\theoremstyle{definition}
\newtheorem{remark}{Remark}[section]

\newcommand\norm[1]{\left\lVert#1\right\rVert}

\newcommand{\wt}[1]{\widetilde{#1}}
\newcommand{\ol}[1]{\overline{#1}}
\newcommand{\R}{\mathbb{R}}
\newcommand{\N}{\mathbb{N}}
\newcommand{\Z}{\mathbb{Z}}

\DeclareRobustCommand{\rchi}{{\mathpalette\irchi\relax}}
\newcommand{\irchi}[2]{\raisebox{\depth}{$#1\chi$}} 

\stackMath
\newcommand\reallywidecheck[1]{%
\savestack{\tmpbox}{\stretchto{%
  \scaleto{%
    \scalerel*[\widthof{\ensuremath{#1}}]{\kern-.6pt\bigwedge\kern-.6pt}%
    {\rule[-\textheight/2]{1ex}{\textheight}}
  }{\textheight}%
}{0.5ex}}%
\stackon[1pt]{#1}{\scalebox{-1}{\tmpbox}}%
}

\makeatletter
\newcommand\RedeclareMathOperator{%
  \@ifstar{\def\rmo@s{m}\rmo@redeclare}{\def\rmo@s{o}\rmo@redeclare}%
}
\newcommand\rmo@redeclare[2]{%
  \begingroup \escapechar\m@ne\xdef\@gtempa{{\string#1}}\endgroup
  \expandafter\@ifundefined\@gtempa
     {\@latex@error{\noexpand#1undefined}\@ehc}%
     \relax
  \expandafter\rmo@declmathop\rmo@s{#1}{#2}}
\newcommand\rmo@declmathop[3]{%
  \DeclareRobustCommand{#2}{\qopname\newmcodes@#1{#3}}%
}
\@onlypreamble\RedeclareMathOperator
\makeatother

 \RedeclareMathOperator{\div}{div}
 
\DeclareMathOperator\supp{supp}

\def\Xint#1{\mathchoice
{\XXint\displaystyle\textstyle{#1}}%
{\XXint\textstyle\scriptstyle{#1}}%
{\XXint\scriptstyle\scriptscriptstyle{#1}}%
{\XXint\scriptscriptstyle\scriptscriptstyle{#1}}%
\!\int}
\def\XXint#1#2#3{{\setbox0=\hbox{$#1{#2#3}{\int}$ }
\vcenter{\hbox{$#2#3$ }}\kern-.6\wd0}}

\def\dashint{\Xint-}

\numberwithin{equation}{section}




\newcommand{\eps}{{\varepsilon}}


\begingroup

\endgroup

\newtheorem{prob}[theorem]{Problem}

\setcounter{tocdepth}{1}
\makeatletter

\definecolor{darkbrown}{rgb}{0.35, 0.22, 0.11}

\begin{document}

\maketitle

\begin{abstract}
We propose a new convex integration scheme in fluid mechanics, and we provide an application to the two-dimensional Euler equations. We prove the flexibility and nonuniqueness of $L^\infty L^2$ weak solutions with vorticity in $L^\infty L^p$ for some $p>1$, surpassing for the first time the critical scaling of the standard convex integration technique.

To achieve this, we introduce several new ideas, including:
\begin{itemize}
    \item[(i)] A new family of building blocks built from the Lamb-Chaplygin dipole.
    
    \item[(ii)] A new method to cancel the error based on time averages and non-periodic, spatially-anisotropic perturbations.
\end{itemize}
\end{abstract}

\section{Introduction}



We investigate the homogeneous incompressible Euler equations:
\begin{equation}\label{EU}
    \begin{cases}
        \partial_t u + \div (u \otimes u) + \nabla p = 0,
        \\
        \div u = 0 \, ,
    \end{cases}\tag{EU}
\end{equation}
where $u$ represents the unknown velocity field and $p$ denotes the scalar pressure field. These equations are posed on the two-dimensional domain $\mathbb{T}^2 = [-\pi, \pi]^2$ with periodic boundary conditions.

Our focus lies on weak solutions $u \in C_t L^2_x$ with vorticity $\omega := \text{curl} \, u$ that is uniformly integrable, $\omega \in C_t L^p_x$. The main result of this paper asserts that the system \eqref{EU} is {\it flexible} within this class, for small values of $p>1$. A first example of flexibility is:

\begin{theorem}[Flexibility]
\label{thm:main}
    There exists $p>1$ such that the following holds. For any divergence-free velocity fields $u_{\rm star}, u_{\rm end}\in L^2(\mathbb{T}^2)$ with zero mean, and any $\eps>0$, there exists a weak solution $u\in C([0,1];L^2(\mathbb{T}^2))$ to \eqref{EU} with $\omega\in C([0,1];L^p(\mathbb{T}^2))$ such that $\|u(\cdot,0) - u_{\rm star}\|_{L^2} \le \eps$ and $\|u(\cdot, 1) - u_{\rm end}\|_{L^2} \le \eps$.
\end{theorem}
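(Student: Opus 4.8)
The plan is to run a convex integration scheme for the Euler--Reynolds system
$$\partial_t u + \div(u\otimes u) + \nabla p = \div \mathring R, \qquad \div u = 0,$$
with $\mathring R$ a symmetric traceless stress, and to establish an iteration proposition with \emph{two} quantitative parameters beyond the usual amplitude $\delta_q$ and frequency $\lambda_q$: a dipole radius $r_q$ and a tube lifespan $\tau_q$. The proposition asserts: given a smooth solution $(u_q, p_q, \mathring R_q)$ with $\|\mathring R_q\|_{C_t L^1}\lesssim \delta_{q+1}$ and suitable derivative bounds, there is a new one with $\|\mathring R_{q+1}\|_{C_tL^1}\lesssim\delta_{q+2}$, $\|u_{q+1}-u_q\|_{C_tL^2}\lesssim\delta_{q+1}^{1/2}$, and---the new point---$\|\omega_{q+1}-\omega_q\|_{C_tL^p}\lesssim\delta_{q+1}^{1/2-\beta}$ for some fixed $\beta>0$, provided $p>1$ is close enough to $1$; moreover the increment is supported in time in $\{\mathring R_q\neq 0\}$. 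Taking $\delta_q$ a small power of a super-exponential $\lambda_q$, the $u_q$ converge in $C_tL^2$ and the $\omega_q$ in $C_tL^p$ to a weak solution of \eqref{EU}.

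The perturbation $w_{q+1}=u_{q+1}-u_q$ (up to a small divergence-free and time corrector) is built from rescaled, rotated and amplitude-modulated copies of the Lamb--Chaplygin dipole $V$, an exact, compactly supported, rigidly translating weak solution of 2D Euler. Decomposing $-\mathring R_q = \sum_\xi \gamma_\xi(t,x)^2\,\xi\otimes\xi$ over a fixed finite set of directions, for each $\xi$ we insert a generation of dipoles of radius $r_{q+1}\approx\lambda_{q+1}^{-1}$ translating in direction $\xi$, which over time $\tau_{q+1}$ sweep thin tubes covering $\mathbb{T}^2$, with amplitude $\gamma_\xi$ along each tube. The key bookkeeping is that $\int_0^{\tau_{q+1}}(w\otimes w)\,dt$ concentrates on each tube with the correct tensor direction and size $\sim (U_{q+1}r_{q+1}/\tau_{q+1})\,\mathbf 1_{\rm tube}$, so summing over the $\sim 1/r_{q+1}$ parallel tubes and over $\xi$ yields $\langle w_{q+1}\otimes w_{q+1}\rangle_{\tau_{q+1}}=-\mathring R_q+(\text{l.o.t.})$; this forces $U_{q+1}^2 r_{q+1}\approx\delta_{q+1}$ and $U_{q+1}\tau_{q+1}\approx 1$, which also give $\|w_{q+1}\|_{C_tL^2}^2\approx\delta_{q+1}$, while a single dipole has vorticity $\sim U_{q+1}/r_{q+1}$ on a disk of radius $r_{q+1}$, so the choice of $r_{q+1}$ controls $\|\omega_{q+1}\|_{C_tL^p}$. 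The new stress $\mathring R_{q+1}$ gathers: (i) the zero-time-average oscillation part $w_{q+1}\otimes w_{q+1}-\langle w_{q+1}\otimes w_{q+1}\rangle_{\tau_{q+1}}$, absorbed by a time antidivergence; (ii) the flow/transport defect caused by modulating $\gamma_\xi$, steering the tubes and localising them, so that the dipoles fail to be exact solutions; (iii) the inter-tube interaction error, small because distinct tubes overlap little; (iv) the linear error $\mathcal R\big(\partial_t w_{q+1}+\div(u_q\otimes w_{q+1}+w_{q+1}\otimes u_q)\big)$, controlled by $\xi$-oscillation and the separation $\lambda_q\ll\lambda_{q+1}$; and (v) the error from mollifying and freezing $\mathring R_q$ on the tube scale. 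One then picks $r_{q+1}$ (hence $U_{q+1},\tau_{q+1}$) a small power of $\delta_{q+1}$ and $\lambda_{q+1}$ a large power of $\lambda_q$ so that (i)--(v) are $\lesssim\delta_{q+2}$ and the vorticity increment is summable.

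For the flexibility statement, build a base subsolution as follows. Since $u_{\rm star}*\varphi_\eta$ and $u_{\rm end}*\varphi_\eta$ are smooth and 2D Euler is globally well posed for smooth (mean-zero) data, let $v_{\rm star},v_{\rm end}\in C^\infty([0,1]\times\mathbb{T}^2)$ be the corresponding exact solutions and set $u_0:=\chi_0(t)v_{\rm star}+\chi_1(t)v_{\rm end}$, where $\chi_0\equiv 1$ near $t=0$, $\chi_1\equiv 1$ near $t=1$, both supported in disjoint neighbourhoods of the endpoints. Then $u_0$ solves \eqref{EU} exactly near $t=0$ and near $t=1$, so the associated stress $\mathring R_0$ (produced by the standard symmetric antidivergence $\mathcal R$, absorbing the pressure) is supported in a compact subinterval of $(0,1)$, while $\|u_0(\cdot,0)-u_{\rm star}\|_{L^2}+\|u_0(\cdot,1)-u_{\rm end}\|_{L^2}\le\eps$ for $\eta$ small. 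Running the iteration from $(u_0,p_0,\mathring R_0)$ with $\lambda_0$ large enough to absorb the finite size of $\mathring R_0$, and keeping every increment supported in time where $\mathring R_q\neq 0$ (hence in that subinterval), yields in the limit a weak solution $u\in C([0,1];L^2)$ of \eqref{EU} with $\omega\in C([0,1];L^p)$ that coincides with $u_0$ near the endpoints, so $\|u(\cdot,0)-u_{\rm star}\|_{L^2}\le\eps$ and $\|u(\cdot,1)-u_{\rm end}\|_{L^2}\le\eps$.

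The main obstacle is items (ii)--(v) of the second paragraph: making the time-average reconstruction of the stress rigorous with non-periodic, steered, amplitude-modulated dipole tubes---in particular estimating the transport/flow defect, the inter-tube interaction, and the commutator between the time average and the time-dependent stress $\mathring R_q$---while simultaneously propagating a \emph{summable} $C_tL^p$ bound on the vorticity. It is precisely the quantitative competition between these requirements (the error terms favour fat, slowly modulated dipoles; the $L^2$ energy constraint and the high-frequency placement needed to beat the linear error pull the other way) that both pushes the scheme past the critical scaling of Beltrami/Mikado constructions and restricts the result to $p$ sufficiently close to $1$.
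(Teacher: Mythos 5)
Your plan reuses the paper's ingredients (Lamb--Chaplygin dipoles, sweeping tubes, time averaging, a time corrector, and the endpoint gluing, which is a fine and essentially standard way to handle the $u_{\rm star},u_{\rm end}$ approximation), but the core mechanism you propose --- reconstructing the stress from the time-averaged quadratic self-interaction, $\langle w_{q+1}\otimes w_{q+1}\rangle_{\tau_{q+1}}=-R_q$, with the bookkeeping $U_{q+1}^2 r_{q+1}\approx\delta_{q+1}$, $U_{q+1}\tau_{q+1}\approx 1$, $r_{q+1}\approx\lambda_{q+1}^{-1}$ --- is precisely the mechanism the paper abandons, and it is incompatible with the vorticity bound you claim. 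With a full generation of dipoles active (one per tube, $\sim r_{q+1}^{-1}$ tubes, which is what your $\|w_{q+1}\|_{C_tL^2}^2\approx\delta_{q+1}$ requires), each dipole carries vorticity $\sim U_{q+1}/r_{q+1}$ on a disk of radius $r_{q+1}$, so
$\|\nabla w_{q+1}\|_{L^1}\sim r_{q+1}^{-1}\cdot (U_{q+1}/r_{q+1})\cdot r_{q+1}^{2}=U_{q+1}=(\delta_{q+1}/r_{q+1})^{1/2}\approx(\delta_{q+1}\lambda_{q+1})^{1/2}$,
which diverges along the iteration, and the $L^p$ norm for $p>1$ is worse by a further positive power of $\lambda_{q+1}$; shrinking $r_{q+1}$ (to help the other errors) makes this worse, and taking $p$ close to $1$ does not help since the divergence is already present at $p=1$. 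This is exactly the obstruction (quadratic cancellation plus the 2D Sobolev embedding) that the paper cites as the reason earlier schemes stop at $L^{1,\infty}$ vorticity or Hardy spaces $H^p$, $p<1$, so the claimed inductive bound $\|\omega_{q+1}-\omega_q\|_{C_tL^p}\lesssim\delta_{q+1}^{1/2-\beta}$ cannot be propagated within your scheme as stated.

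There is also a structural problem if you instead try to save the vorticity by making the blocks (nearly) exact translating solutions with one dipole active at a time: averaging the Euler equation satisfied by a rigidly translating, time-periodic dipole over one period gives $\div\langle w\otimes w\rangle_{\tau}=-\nabla\langle P\rangle_{\tau}$ up to the small defects, i.e.\ the time-averaged quadratic term is a pure gradient and cannot match $\div(\gamma_\xi^2\,\xi\otimes\xi)$, which is generically not a gradient; any non-gradient contribution would have to come from the very modulation/steering defects you relegate to error items (ii)--(iii), so the cancellation either produces nothing or promotes those ``errors'' to leading order. The paper's resolution is different in kind: the dipole is kept an almost-exact Euler solution (so nothing quadratic needs cancelling), its core size and speed are modulated along a $\lambda_{q+1}^{-1}$-dense trajectory via $r(x)=r_{q+1}a_i^k(x)$ and $x'=\eta\,\xi/r(x)$, and the old error is reconstructed from the time average of the resulting momentum source/sink $S\,\tfrac{d}{dt}(\eta r)$, a term \emph{linear} in the building block (Section 1.2.3, Propositions \ref{prop:build-block1} and \ref{prop:auxiliary}); the mean-free remainder is handled by the auxiliary block $\widetilde U_i^k$ fattened to scale $\lambda_{q+1}^{-1}$ and the corrector $Q_{q+1}$, precisely to avoid paying $r^{-1}$ in the vorticity of the corrector. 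Because the cancellation is linear, the amplitude is only $\delta_{q+1}^{1/2}$ while the concentration scale $r_{q+1}=\lambda_{q+1}^{-\mu}$ remains a free parameter, which is what makes $\|DV_i^k\|_{L^\infty_tL^{\bar p}_x}\lesssim\delta_{q+1}^{1/2}(r_{q+1}\delta_{q+1})^{2/\bar p-2}$ summable for $\bar p>1$ close to $1$. Without this (or an equivalent) replacement of the quadratic mechanism, your outline does not prove the theorem.
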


An immediate consequence of Theorem \ref{thm:main} is that there exists a dense set of initial conditions $u_{\text{start}}\in L^2\cap W^{1,p}$ with zero mean, such that the Cauchy problem associated with \eqref{EU} admits \textit{non-conservative} weak solutions $u\in C_tL^2_x$ with vorticity $\omega \in C_tL^p_x$. To see this, it is enough to pick $u_{\text{start}}$ with much higher kinetic energy than $u_{\text{end}}$.

Our construction is also able to produce non-uniqueness for the same set of wild initial conditions $u_{\text{start}}\in L^2\cap W^{1,p}$.

\begin{theorem}[Nonuniqueness]\label{thm:nonuniqueness}
    There exists $p>1$ and a dense subset of  divergence-free velocity fields 
    $u_{\rm {start}}\in L^2(\mathbb{T}^2)\cap W^{1,p}(\mathbb{T}^2)$ with zero mean, such that \eqref{EU} admits infinitely many non-conservative weak solutions $u\in C([0,1];L^2(\mathbb{T}^2))$ with $\omega\in C([0,1];L^p(\mathbb{T}^2))$ satisfying $u(\cdot, 0) = u_{\rm start}$.
\end{theorem}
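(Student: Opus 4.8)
\textbf{Proof proposal for Theorem \ref{thm:nonuniqueness}.}

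The plan is to bootstrap the nonuniqueness statement from the flexibility Theorem \ref{thm:main} by a now-standard ``gluing + compactness'' argument, adapted to the $C_t L^2_x \cap C_t W^{1,p}_x$ setting. First I would fix any smooth, divergence-free, zero-mean field $\bar u$ and observe that the set of such $\bar u$ is dense in $L^2 \cap W^{1,p}$; the goal is to perturb each $\bar u$ slightly to an element $u_{\rm start}$ of the claimed dense set that is a genuine initial datum for infinitely many solutions. Since $\bar u$ is smooth, the Cauchy problem for \eqref{EU} with datum $\bar u$ has a unique local-in-time smooth solution $u_{\rm smooth}$ on some interval $[0,T]$, which in particular is conservative. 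The idea is then to run this smooth solution on a short initial time interval $[0,\tau]$, and on the complementary interval $[\tau, 1]$ splice in a flexibility solution produced by Theorem \ref{thm:main} (rescaled from $[0,1]$ to $[\tau,1]$) that starts $\eps$-close to $u_{\rm smooth}(\cdot, \tau)$ and ends $\eps$-close to some target $u_{\rm end}$ with strictly smaller energy than $\bar u$. To make the concatenation an honest weak solution, the near-matching at time $\tau$ must be turned into an exact matching: I would absorb the mismatch either by choosing, inside the proof of Theorem \ref{thm:main}, the starting field exactly (the theorem as stated only gives $\eps$-closeness, but its convex-integration proof typically allows prescribing $u(\cdot,0)$ exactly up to a controlled error that can be fed into the first perturbation step), or by an additional short transition layer; I will comment below on which.

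The key steps, in order, are: \emph{(1)} Density reduction: reduce to producing, for each smooth zero-mean divergence-free $\bar u$ and each $\delta>0$, some $u_{\rm start}$ with $\|u_{\rm start}-\bar u\|_{L^2\cap W^{1,p}} \le \delta$ admitting infinitely many solutions; the union over a countable dense family of $\bar u$'s and $\delta \to 0$ then yields the dense set. \emph{(2)} Short-time smooth solve: solve \eqref{EU} with datum $\bar u$ to get $u_{\rm smooth} \in C([0,\tau];C^\infty)$, conservative, with $u_{\rm smooth}(\cdot,0)=\bar u$. \emph{(3)} Flexibility on the tail: apply Theorem \ref{thm:main} on the rescaled interval $[\tau,1]$ with $u_{\rm star} := u_{\rm smooth}(\cdot,\tau)$ and with $u_{\rm end}$ ranging over a one-parameter family (e.g.\ $\lambda u_{\rm end}^0$, $\lambda$ in an interval) of zero-mean divergence-free fields having pairwise distinct kinetic energies, all with energy strictly below $\|\bar u\|_{L^2}^2/2$; this produces, for each $\lambda$, a weak solution $u^\lambda$ on $[\tau,1]$ with $\omega^\lambda \in C_t L^p_x$, $u^\lambda(\cdot,\tau)$ exactly equal to $u_{\rm smooth}(\cdot,\tau)$, and with terminal energies filling an interval — hence infinitely many \emph{distinct} solutions, all non-conservative since their terminal energy is below the (conserved) energy of the smooth piece. \emph{(4)} Concatenation and regularity: define $u$ to equal $u_{\rm smooth}$ on $[0,\tau]$ and $u^\lambda$ on $[\tau,1]$; check that the matching at $t=\tau$ makes $u$ a weak solution on $[0,1]$ (the weak formulation only sees the trace, which agrees), that $u \in C([0,1];L^2)$ and $\omega \in C([0,1];L^p)$ (continuity at $\tau$ follows from the exact spatial match and continuity of each piece, using that the smooth vorticity is in particular in $L^p$), and that $u(\cdot,0)=\bar u$. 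Then set $u_{\rm start} := \bar u$ — no perturbation is even needed if the exact-matching version of Theorem \ref{thm:main} is available — so the ``dense subset'' is literally the smooth zero-mean divergence-free fields, which is dense in $L^2 \cap W^{1,p}$; if only the $\eps$-version is available, one instead sets $u_{\rm start}$ to be the (slightly perturbed) starting field that the convex integration scheme actually realizes exactly, and notes it is within $\delta$ of $\bar u$ in both norms by choosing $\eps$ small and controlling the $W^{1,p}$ norm of the first perturbation.

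The main obstacle I anticipate is step \emph{(3)}–\emph{(4)}: making the junction at $t=\tau$ an \emph{exact} spatial identity while keeping $\omega \in C_t L^p_x$ across the junction. Theorem \ref{thm:main} as stated only gives $u(\cdot,0)$ within $\eps$ of the prescribed field in $L^2$, with no $W^{1,p}$ or exactness claim at the endpoints; so one must either (a) open up its proof and verify that the convex-integration iteration can start from the prescribed subsolution whose velocity equals $u_{\rm smooth}(\cdot,\tau)$ exactly (the endpoint constraint being compatible with the Reynolds-stress bookkeeping since $u_{\rm smooth}(\cdot,\tau)$ is smooth, hence an exact solution of the relaxed system with a smooth stress there), or (b) insert a short glued transition on $[\tau-\sigma,\tau]$ interpolating in a divergence-free way between $u_{\rm smooth}$ and the flexibility datum, at the cost of a smooth — hence harmless — forcing that one then re-absorbs, or simply accept a compactly-supported-in-time external issue and instead run the flexibility scheme itself to produce the transition. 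Option (a) is cleanest and is the standard move in this literature (cf.\ the ``patching'' lemmas used to prove nonuniqueness from $h$-principle statements); I would phrase the proof around it, citing the structure of the proof of Theorem \ref{thm:main} for the claim that the starting velocity can be prescribed exactly. The remaining points — distinctness of the $u^\lambda$ (immediate from distinct terminal energies), non-conservativity (immediate from the energy drop relative to the smooth piece), and density (immediate from smoothness of $\bar u$) — are routine.
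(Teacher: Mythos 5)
There is a genuine gap, and it sits exactly where you flagged it: the exact matching at the junction (your option (a)). The scheme of this paper cannot prescribe the starting velocity exactly, and nothing in the proof of Theorem \ref{thm:main} can be ``opened up'' to do so: the mollification step replaces $u_q$ by $u_\ell$ near $t=0$ at every stage, and Proposition \ref{prop:iteration}(iii) only gives $\|u_{q+1}(\cdot,0)-u_q(\cdot,0)\|_{L^2}\le \lambda_q^{-1}$, so the limit solution's initial value is itself a wild limit object, merely $\eps$-close to the target; it is never equal to a prescribed smooth field, nor does the solution coincide with a smooth Euler solution on an initial time interval. This is not a technicality: if your splicing worked, you would obtain infinitely many $C_tL^2$ solutions with $C_tL^p$ vorticity emanating from \emph{every} smooth zero-mean divergence-free datum, a statement strictly stronger than Theorem \ref{thm:nonuniqueness} (which deliberately asserts nonuniqueness only for a dense set of \emph{wild} data produced by the construction) and beyond anything the paper's scheme delivers. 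Your fallback (b) introduces an external force, so it does not prove the theorem as stated either. Consequently your claim that ``the dense subset is literally the smooth zero-mean divergence-free fields'' is not what can be established here.

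The paper's actual route avoids the junction problem altogether and is worth contrasting with yours. Fix $u_{\rm start}$; for every choice of $u_{\rm end}$ build the starting triple $(u_0,p_0,R_0)$ as in \eqref{eq:u0p0R0}, with a time cutoff so that all these triples coincide on $[0,1/4]$ regardless of $u_{\rm end}$. The key additional ingredient is the locality-in-time property (Remark \ref{rmk:locality in time}): if two Euler--Reynolds triples coincide on $[0,t]$, the iterates can be built to coincide on $[0,t-\lambda_q^{-1}]$. Running the iteration simultaneously for all choices of $u_{\rm end}$, the limits therefore agree on a fixed neighborhood of $t=0$, hence share the \emph{same} initial datum $v$ with $\|v-u_{\rm start}\|_{L^2}\le\eps$; they are pairwise distinct and non-conservative because at $t=1$ they are $\eps$-close to different $u_{\rm end}$'s of lower energy (via Proposition \ref{prop:iteration}(iii)). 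In short, the missing idea in your proposal is that ``same initial datum'' is achieved not by exact prescription of a smooth datum plus gluing, but by making the whole family of subsolutions identical near $t=0$ and propagating that identity through the iteration; the dense set of initial data is then the set of wild limits $v$, one for each $u_{\rm start}$ and $\eps$.
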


The weak solutions to \eqref{EU} constructed in this paper are built using a {\it new convex integration scheme}. It is not surprising that a small modification of our approach is able to establish the following variant of flexibility for \eqref{EU} in the class of $C_tL^2_x$ with solutions having $C_tL_x^p$ vorticity.

\begin{theorem}[Time-Wise Compact Support]\label{thm:main2}
    There exists a non-trivial weak solution $u\in C(\R;L^2(\mathbb{T}^2))$ to \eqref{EU} with compact support in time and $\omega \in L^\infty(\R; L^p(\mathbb{T}^2))$ for some $p>1$.
\end{theorem}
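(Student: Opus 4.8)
The plan is to deduce Theorem \ref{thm:main2} from the flexibility scheme underlying Theorem \ref{thm:main}, by iterating the convex integration construction on a sequence of shrinking time intervals that accumulate at the endpoints of a fixed interval, say $[-1,1]$. The guiding principle is standard in this circle of ideas (cf. the ``h-principle with compact support in time''): one wants a solution which is zero near $t=-1$ and $t=1$, nontrivial in between, and which has the desired $L^\infty_t L^p_x$ vorticity regularity throughout. Since the trivial solution $u\equiv 0$ is an exact solution, the natural strategy is to build $u$ as the limit of a convex integration scheme started from the zero subsolution (zero Reynolds stress), but with the energy profile prescribed to be a fixed bump function $e(t)$ supported in $(-1,1)$ and positive somewhere.

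First I would isolate from the proof of Theorem \ref{thm:main} the inductive proposition it relies on: a statement that given a smooth ``subsolution'' (an approximate solution $u_q$ solving Euler up to a stress error $R_q$) with suitable estimates, one produces $u_{q+1}$ with a much smaller error, with $\|u_{q+1}-u_q\|_{C_tL^2_x}$ controlled by (a power of) $\|R_q\|$, with $\omega_{q+1}$ bounded in $L^p$ uniformly in $q$, and — crucially — with the perturbation $u_{q+1}-u_q$ supported in time inside the region where $R_q\ne 0$ (a localization property that these building-block constructions respect, since the perturbation is assembled from cutoffs of $R_q$). Then I would run this iteration starting from $u_0\equiv 0$, $R_0$ chosen as a smooth, time-localized stress supported in $(-1,1)$ whose presence forces the energy to follow a prescribed nontrivial profile $e(t)$; the limit $u=\lim_q u_q$ then solves \eqref{EU} exactly, lies in $C_tL^2_x$ with $\omega\in L^\infty_tL^p_x$, vanishes identically outside $(-1,1)$ by the time-support propagation, and is nontrivial because its energy at some interior time is close to $e(t)>0$. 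Extending by zero to all of $\R$ gives the claimed solution on $C(\R;L^2(\mathbb{T}^2))$.

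The key steps, in order, are: (1) extract the one-step inductive proposition from the body of the paper, paying attention to which norm of $R_q$ drives the size of the correction and to the $L^p$-vorticity bound, and verify it carries the time-localization of the perturbation to $\supp_t R_q$ with a slight enlargement that can be summed; (2) design the initial error $R_0$ and the associated target energy $e(t)$, supported in $(-1,1)$, so that the first correction is nontrivial — here one wants $\partial_t e$ to be small enough, relative to the available gain, that the scheme closes; (3) choose the parameters (oscillation frequencies $\lambda_q$, amplitudes $\delta_q$) so that $\sum_q \|u_{q+1}-u_q\|_{C_tL^2_x}<\infty$, $\sup_q\|\omega_q\|_{L^\infty_tL^p_x}<\infty$, and $\|R_q\|\to 0$, simultaneously — this is where the ``surpassing the critical scaling'' gain in the new scheme is used; (4) pass to the limit, check $u$ solves Euler weakly and $\omega\in L^\infty_tL^p_x$ by lower semicontinuity / weak-$*$ compactness, and confirm $u\not\equiv 0$ and $\supp_t u\subset[-1,1]$.

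The main obstacle I anticipate is step (1) combined with step (2): ensuring that the time-localization of the perturbation is genuinely preserved under the new, \emph{non-periodic and spatially anisotropic} building blocks advertised in the introduction — these are less rigid than classical Beltrami-type flows, so one must check the spatial cutoffs and the time averaging used to cancel the error do not spread the temporal support, and that the gluing/averaging step is compatible with a nontrivial prescribed energy $e(t)$ rather than just with driving the energy to zero. A secondary difficulty is that the scheme is tuned to the flexibility statement (matching prescribed initial/final data), and one must verify it still closes when instead we fix $u_0\equiv0$ and merely demand a nontrivial interior energy; but since any nonzero target at an interior time suffices, and $u\equiv0$ is itself an exact solution providing a clean starting point, this should be a matter of choosing $e$ with sufficiently small $\|\partial_t e\|$, not a structural change to the construction.
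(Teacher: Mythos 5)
Your plan for obtaining the \emph{compact time support} is essentially the paper's: the perturbation at each stage is localized near the temporal support of the previous data, with a per-step loss of order $\lambda_q^{-1}$ that is summable; this is exactly Remark~\ref{rmk:locality in time}, and your worry that the new non-periodic building blocks might spread the support is resolved there (the cutoffs $\zeta_i^k$ and the corrector $Q_{q+1}$ vanish at the times $k\tau_{q+1}$, so the construction is local in time up to $\lambda_q^{-1}$).

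The genuine gap is the nontriviality mechanism. You propose to start from $u_0\equiv 0$ and force a nonzero limit by prescribing an energy profile $e(t)$, treating this as a minor adjustment. But the iterative Proposition~\ref{prop:iteration} gives only \emph{upper} bounds, $\|u_{q+1}-u_q\|_{L^\infty_tL^2_x}\le M\delta_{q+1}^{1/2}$ and \eqref{eq:iterate1}; nothing in the scheme forces the perturbations to add up rather than cancel, so starting from zero you cannot conclude $u\not\equiv 0$. Prescribing the kinetic energy is precisely the feature the authors state they do \emph{not} implement: it would require modifying the iteration (space--time cutoffs à la \cite{BDSV}), i.e.\ a structural change, not a choice of $e$ with small $\|\partial_t e\|$. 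Moreover, with $u_0\equiv 0$ your initial stress must satisfy $\div R_0=\nabla p_0$, so it can be absorbed into the pressure and carries no quantitative lower-bound information. The paper avoids all of this by a different, elementary device: it starts from an explicit large-amplitude shear flow $u_0=\lambda_0^{\frac34\beta\sigma}\chi(t)\sin(\lambda_0x_2)e_1$ cut off in time, whose only Euler--Reynolds error comes from $\partial_t u_0$ and whose anti-divergence gains a factor $\lambda_0^{-1}$ from the spatial oscillation, so $\|R_0\|_{L^\infty_tL^1_x}\lesssim\lambda_0^{-1+\frac34\beta\sigma}$ as in \eqref{z30}; then the total correction accumulated along the iteration is of size $\sim M\lambda_1^{\beta/2}=M\lambda_0^{\beta\sigma/2}$, which is strictly smaller than $\|u_0\|_{L^\infty_tL^2_x}\sim\lambda_0^{\frac34\beta\sigma}$ for $\lambda_0$ large, whence $u\neq 0$. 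To repair your argument you would either have to adopt this ``start from a large approximate solution and beat it by the corrections'' comparison, or genuinely extend Proposition~\ref{prop:iteration} to include energy prescription, which is beyond what the paper provides.
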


As for many implementations of convex integration in fluid dynamics, a technical modification of the proof of Theorems~\ref{thm:main} and \ref{thm:main2} allows to obtain flexibility of solutions while prescribing their kinetic energy.
We expect the following statement to follow by our arguments for some $p>1$:  for every positive smooth function $e:[0,1]\to \R$ there exists a weak solution $u\in C([0,1];L^2(\mathbb{T}^2))$ with $\omega \in C([0,1];L^p(\mathbb{T}^2)$ such that 
    \begin{equation}
        e(t) = \frac{1}{2}\int_{\mathbb{T}^2} |u(x,t)|^2\, dx \, , \quad t\in [0,1]\, .
    \end{equation}
The proof of such result should follow by modifying our iterative Proposition \ref{prop:iteration} below by including closeness to the energy profile, which in turn can be guaranteed with the idea introduced in \cite{BDSV} of space-time cutoffs.
 However, given the required technical complications we prefer to not to purse this goal here.




\subsection{Context and Motivations}

A classical well-posedness result by Wolibner \cite{Wolibner33} and Hölder \cite{Holder33} ensures that \eqref{EU} is well-posed in \(C^{1,\alpha}\) for any \(\alpha > 0\). The proof of this result is based on the fact that the vorticity \(\omega = \text{curl} \, u\) of a solution to \eqref{EU} is transported by the velocity field \(u\) when the latter is regular enough. More precisely, we have the following \textit{vorticity formulation} of the Euler system:
\begin{equation}\label{EU vor}
    \begin{cases}
        \partial_t \omega + (u\cdot \nabla) \omega = 0 \, , \\
        u = \nabla^\perp \Delta^{-1} \omega \, ,
    \end{cases}
    \tag{EUvor}
\end{equation}
where the second equation, namely the {\it Biot--Savart law}, expresses the inverse of the curl operator on \(\mathbb{T}^2\).
These well-posedness results are in stark contrast to the three-dimensional case, where Elgindi \cite{ElgindiAnnals} proved the formation of finite-time singularities due to vortex stretching. In two dimensions, the borderline case $u \in C^1$ is more delicate. In this class, Bourgain and Li \cite{BL15} and later Elgindi and Masmoudi \cite{EM20} proved strong ill-posedness for the Euler equation (see also \cite{Cordoba2024}).

\bigskip

The transport structure of \eqref{EU vor} suggests that $L^p$ norms of the vorticity are formally conserved for any $p \in [1,\infty]$. For $p>1$, this property was utilized in \cite{DPM87} to establish the existence of distributional solutions starting from an initial data with vorticity in $L^p$. A similar existence result is significantly more difficult for $p=1$, and it was demonstrated by Delort \cite{Delort91} (see also \cite{EvansMuller94,Majda93,VecchiWu}), extending the existence theory to initial vorticities in $H^{-1}$ (where this latter condition ensures the finiteness of the energy) whose positive (or negative) part is absolutely continuous.

\subsubsection{Uniqueness and Yudovich Class}

The class of weak solutions with uniformly bounded vorticity holds a special significance in the well-posedness theory of the 2D-Euler equations. According to the classical result by Yudovich \cite{Yud62,Yud63} (see also the proof in \cite{Loeper06} and generalizations \cite{Vishik99,CS21}), it is stated that for any initial data $\omega_0 \in L^\infty$, there exists a unique solution $\omega\in L^\infty_tL^\infty_x$ to \eqref{EU vor} originating from $\omega_0$.
The insight behind Yudovich's uniqueness result is that a bounded vorticity yields an almost Lipschitz velocity field via the Biot-Savart law. Considering the transport structure of \eqref{EU vor}, this almost Lipschitz regularity suffices to guarantee well-posedness.

\medskip

A central question is whether Yudovich's result extends to the class of weak solutions with vorticity in $L^\infty_t L^p_x$ for $p<\infty$. In this framework, the fluid velocity $u\in L^\infty_t W^{1,p}_x$ is only Sobolev regular, and Yudovich's Gronwall type argument breaks down. However, in view of the developments on the DiPerna--Lions theory \cite{DiPernaLions,Ambrosio04} dealing with passive scalar with Sobolev velocity fields, one might expect to prove some form of well-posedness for \eqref{EU vor} in this class.

Recent developments point in the direction of non-uniqueness, although none of them has fully solved the problem yet. Vishik \cite{Vis18,Vis18a}, (see also \cite{OurLectureNotes}), has been able to demonstrate non-uniqueness within the class of $L^\infty_t L^p_x$ vorticity, with an additional degree of freedom: an external body force belonging to the integrability space $L^1_t L^p_x$. The non-uniqueness takes the form of symmetry breaking, and its construction is based on the existence of an unstable vortex.

A second attempt has been pursued by Bressan and Shen \cite{BrSh21}, based on numerical experiments which exhibit the symmetry-breaking type of non-uniqueness observed by Vishik. Their work represents a preliminary step towards a computer-assisted proof.

In the framework of point vortex systems, Grotto and Pappalettera \cite{GP22} have recently demonstrated that any configuration of $N$ initial point vortices is the singular limit of an evolution of $N+2$ point vortices. As a corollary, they managed to prove non-uniqueness for the ``symmetrized" weak vorticity formulation \eqref{EU vor weak} in the class of measure-valued vorticities.
Literature on point vortex systems is extensive and challenging to summarize; we refer the reader to the monographs \cite{MajdaBertozzi02,MarchioroPulvirenti94}.

\subsubsection{Convex Integration and Flexibility}

The first flexibility results for the $2D$ Euler equations were obtained by Scheffer \cite{Scheffer93}, who constructed nontrivial weak solutions $u\in L^2_tL^2_x$ with compact support in space and time. The existence of infinitely many dissipative weak solutions to the Euler equations was first proven by Shnirelman in \cite{Shnirelman00}, in the regularity class $L^\infty_t L^2_x$.

\bigskip

The convex integration method in fluid dynamics was pioneered by De Lellis and Székelyhidi in the context of the Onsager conjecture \cite{DeLellisSzekelyhidi09,DeLellisSzekelyhidi13}. These constructions, inspired by Muller and Sverak's work on Lipschitz differential inclusions \cite{MullerSverak03}, as well as Nash's paradoxical constructions for the isometric embedding problem \cite{Nash}, have led to a remarkable sequence of works including \cite{BDLISZ15,DSZ17,Buckmaster15}, culminating in the resolution of the flexibility part of the Onsager Conjecture by Isett \cite{Isett2018Annals}. Further developments in the study of the Onsager conjecture can be found in \cite{Choffrut13,CDS12,BDLSZ16,Isett22,NovackVicol23,GiriRadu23}.
We refer to the surveys \cite{BVsurvey19,BVsurvey21,DSsurvey17,DSsurvey19,DSsurvey22} for a more complete history of the Onsager program.

\bigskip

A significant breakthrough in convex integration was achieved by Buckmaster and Vicol \cite{BV19}, who introduced {\it intermittency} into the scheme. This innovation allows to treat the three-dimensional Navier-Stokes equations, which yields the first flexibility result for weak solutions. Since then, convex integration with intermittency has proven to be powerful and versatile, applicable to various problems 
\cite{MS18,BrueColomboDeLellis21,cheskidov2022sharp,CheskidovLuo2,CheskidovLuo3,NovackVicol23,BMNV23,Kwon2023}.
In \cite{BrueColombo23}, the first and second authors designed a convex integration scheme with intermittency to address the problem of uniqueness of the two-dimensional Euler equations \eqref{EU} with vorticity $\omega \in L^\infty_t L^p_x$, in relation to Yudovich's result. However, they could not reach the class of integrable vorticities, proving nonuniqueness in the class of weak solutions with $\omega \in L^\infty_t L^{1,\infty}_x$, where $L^{1,\infty}$ is a Lorentz space. Subsequently, Buck and Modena \cite{BuckModena24,BuckModena24II} proved nonuniqueness and flexibility in the class of weak solutions with $\omega \in L^\infty_t H^p_x$, where $H^p$ is the Hardy space with parameter $0 < p < 1$. Remarkably, their solutions are also admissible. The first nonuniqueness result with $L^p$ initial vorticity in the class of admissible solutions was established by Mengual in \cite{Mengual23}.
All these developments have highlighted the limitations of classical convex integration constructions with intermittency in two dimensions. Due to an inherent obstruction arising from the mechanism used to cancel the error and the Sobolev embedding theorem, convex integration solutions cannot achieve \( L^1 \) integrability for the vorticity. For an explanation of this obstruction, we refer the reader to \cite[Section 1.1]{BrueColombo23}.

Our main results, Theorem \ref{thm:main} and Theorem \ref{thm:main2}, represent the first convex integration constructions that overcome the inherent obstruction and achieve integrability of the vorticity beyond the \( L^1 \) space. This is due to a completely new design, which will be explained in the next sections.

\subsubsection{Energy Conservation, Vanishing Viscosity, Turbulence}

Energy-dissipating solutions to the Euler equations are crucial in the theory of turbulence, particularly in relation to the concepts of {\it anomalous dissipation} and the {\it zeroth law of turbulence}. In three dimensions, the celebrated conjecture by Onsager, now established as a theorem, states that weak solutions to the Euler equations belonging to the class \(u \in  C^{\alpha}_{x,t}\) conserve energy when \(\alpha > \frac{1}{3}\), but may dissipate energy when \(\alpha < \frac{1}{3}\). The critical threshold \(\alpha = \frac{1}{3}\) is dimensionless. Recently, \cite{GiriRadu23} constructed solutions \(u \in C^{\alpha}_{x,t}\) to the two-dimensional Euler equations \(\eqref{EU}\) that do not conserve energy for a given $\alpha < 1/3$.

\bigskip

The question of energy conservation is particularly meaningful in the context of weak solutions \(u \in L^\infty_t L^2_x\) to \eqref{EU} with uniformly integrable vorticity \(\omega \in L^\infty_t L^p_x\). A natural conjecture, generalizing Onsager's conjecture, is the following.

\begin{conjecture}[Energy Conservation]
\begin{itemize}
    \item[(i)] If $p\ge 3/2$, any weak solution \( u \in L^\infty_t L^2_x \) to \eqref{EU} with \( \omega \in L^\infty_t L^p_x \) conserves the kinetic energy.

    \item[(ii)] If \( p < \frac{3}{2} \), there exist weak solutions \( u \in L^\infty_t L^2_x \) to \eqref{EU} with \( \omega \in L^\infty_t L^p_x \) that do not conserve the energy.
\end{itemize} 
\end{conjecture}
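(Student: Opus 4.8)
The plan is to treat the two parts by completely different means: part (i) is a Constantin--E--Titi-type energy-balance argument in which the exponent $3/2$ enters through a two-dimensional Sobolev embedding, whereas part (ii) is a flexibility statement of the same nature as Theorems~\ref{thm:main} and \ref{thm:main2}, whose \emph{sharp} form I expect to be the main obstacle. For (i), fix a weak solution $u\in L^\infty_tL^2_x$ of \eqref{EU} with $\omega\in L^\infty_tL^p_x$ and $p\ge 3/2$, and set $u_\ell=u*\varphi_\ell$ with a standard spatial mollifier. Testing the mollified momentum equation against $u_\ell$ and using $\div u_\ell=0$ gives, for a.e.\ $t$,
\begin{equation*}
\frac{d}{dt}\int_{\mathbb{T}^2}\frac{|u_\ell|^2}{2}\,dx=\int_{\mathbb{T}^2}\nabla u_\ell:r_\ell\,dx,\qquad r_\ell:=(u\otimes u)*\varphi_\ell-u_\ell\otimes u_\ell.
\end{equation*}
By the Biot--Savart law in \eqref{EU vor} and Calder\'on--Zygmund estimates one has $\|\nabla u_\ell\|_{L^p}\lesssim\|\omega\|_{L^p}$ uniformly in $\ell$, while the classical commutator identity yields the pointwise bound $|r_\ell(x)|\lesssim\int\varphi_\ell(y)\,|u(x-y)-u(x)|^2\,dy+|u_\ell(x)-u(x)|^2$, hence $\|r_\ell\|_{L^{p'}}\lesssim\sup_{|y|\le\ell}\|u(\cdot-y)-u\|_{L^{2p'}}^2$.

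The key numerology is that, in two dimensions, $W^{1,p}(\mathbb{T}^2)\hookrightarrow L^{2p'}(\mathbb{T}^2)$ holds exactly when $p\ge 3/2$: for $p<2$ this is the inequality $2p'=\tfrac{2p}{p-1}\le\tfrac{2p}{2-p}=p^\ast$, equivalent to $p\ge 3/2$, and for $p\ge 2$ it is trivial. Combined with $\|u\|_{W^{1,p}}\lesssim\|u\|_{L^2}+\|\omega\|_{L^p}$ this shows $u(t)\in L^{2p'}$ with norm controlled by $\|\omega(t)\|_{L^p}$, so continuity of translations on $L^{2p'}$ makes the right-hand side of the commutator bound tend to $0$ pointwise in $t$ as $\ell\to0$, while it stays dominated by $C\,\|\omega(t)\|_{L^p}^2\in L^\infty_t$. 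Integrating in time, using $u_\ell(t)\to u(t)$ in $L^2$ for a.e.\ $t$ and dominated convergence on the right, forces the kinetic energy to agree a.e.\ with a constant, which is (i).

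For part (ii) I would run the convex integration scheme developed in this paper --- the iteration behind Theorems~\ref{thm:main} and \ref{thm:main2}, built on the Lamb--Chaplygin dipole building blocks and the time-average, spatially anisotropic error-cancellation mechanism --- but now tracking the vorticity integrability quantitatively, aiming to keep the $L^p$ norm of the vorticity of the iterates bounded for $p$ as close to $3/2$ as possible, and choosing the two endpoints with different kinetic energies so as to break conservation. The hard part will be exactly this bookkeeping: the intermittency dimension of the available building blocks, weighed against the $L^1$-type cost of absorbing the Reynolds stress, caps $p$ strictly below $3/2$ in the present design --- Theorems~\ref{thm:main}--\ref{thm:main2} only produce \emph{some} $p>1$ --- and reaching the conjectured threshold appears to demand a genuinely more efficient family of building blocks (or a new way to cancel the error), which I regard as the essential open point. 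The remaining ingredient, prescribing a general nonconstant energy profile $e(t)$, should by contrast be routine via the space--time cutoffs of \cite{BDSV}.
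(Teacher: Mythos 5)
You should first note that the statement you are trying to prove is stated in the paper as a \emph{conjecture}, not a theorem: the paper offers no proof of it, attributes part (i) to the literature (\cite{CCFS08}, \cite{CLL16}), and presents its own Theorem~\ref{thm:main} only as ``the first advancement in the direction of flexibility'' for part (ii). So there is no proof in the paper to compare against, and any complete argument you give would be new mathematics. With that understood, your sketch for (i) is a reasonable reconstruction of the known Constantin--E--Titi-type argument: the numerology is correct ($W^{1,p}(\mathbb{T}^2)\hookrightarrow L^{2p'}$ for $1<p<2$ is exactly $2p/(p-1)\le 2p/(2-p)$, i.e.\ $p\ge 3/2$, and the case $p\ge 2$ is immediate), and the commutator bound, continuity of translations in $L^{2p'}$, and dominated convergence close the argument modulo routine technicalities (justifying the test with $u_\ell$ via the mollified equation, and handling the $p=3/2$ endpoint where the embedding is borderline but still valid in 2D). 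This is essentially the content of the cited references, so (i) is fine as a citation or as a self-contained proof.

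The genuine gap is part (ii), and you have in fact identified it yourself: the convex integration scheme of this paper does not, and cannot as designed, reach any $p$ near $3/2$. The iteration closes only for $\overline p = 1+\tfrac{1}{6500}$ (Proposition~\ref{prop:iteration}), and the obstruction is quantitative, not bookkeeping: the vorticity cost of the Lamb--Chaplygin building blocks scales like $r^{2/p-2}$ (Lemma~\ref{lemma: properties Pi Wr}(iii)), while the $L^1$ size of the Reynolds errors (e.g.\ \eqref{error: Fki}, the linear error \eqref{error: linear error}, and the source error \eqref{error: source error}) forces the parameter inequalities of Section~\ref{sec:parameters}, which pin $p$ very close to $1$. ``Tracking the $L^p$ norm more carefully'' within the same scheme does not change these constraints; the paper's own remark on the improved exponent only speculates about $\overline p = 1+\tfrac{1}{2000}$. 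So your plan for (ii) is a research program, not a proof, and the conjecture's threshold $3/2$ remains open --- exactly as the paper states. As a submission for this statement, your text proves (i) (up to standard details) but leaves (ii) unproven, which is unavoidable given the current state of the art; you should present it as such rather than as a proof of the full statement.
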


Energy conservation for $p\ge 3/2$ has already been established; see, for instance, \cite{CCFS08}, \cite{CLL16}. To the best of our knowledge, Theorem \ref{thm:main} is the first advancement in the direction of flexibility.

In two space dimensions, vanishing viscosity solutions are known to exhibit more rigid properties compared to generic weak solutions to \eqref{EU}. Specifically, if the initial vorticity \(\omega_0 \in L^p\) of a vanishing viscosity solution is integrable in \(L^p\) for some \(p > 1\), the solution automatically conserves energy. See \cite{CLL16}, \cite{LMP21}, \cite{derosaPark24}, and \cite{CrippaSpirito15}.
Notably, the solutions constructed in Theorem \ref{thm:main} cannot be vanishing viscosity solutions. To the best of the authors' knowledge, these represent the first examples of weak solutions to \eqref{EU} with uniformly integrable vorticity \(\omega \in L^\infty_t L^p_x\) that are not vanishing viscosity solutions. 
In contrast, Yudovich solutions are always vanishing viscosity \cite{Masmudi07,ConstantinDrivasElgindi22,CiampaCrippaSpirito21}.

\begin{remark}[Energy Conservation vs Nonuniqueness]
    In the context of weak solutions \( u \in L^\infty_t L^2_x \) to \eqref{EU} with uniformly integrable vorticity \( \omega \in L^\infty_t L^p_x \), nonuniqueness is expected for every \( p < \infty \), whereas energy conservation is expected for \( p \geq \frac{3}{2} \). This highlights the distinct nature of non-uniqueness and energy non-conservation.
\end{remark}

The vorticity formulation \eqref{EU vor} for weak solutions \( u \in L^\infty_t L^2_x \) to \eqref{EU} with \( \omega \in L^\infty_t L^p_x \) makes distributional sense as soon as \(p \ge \frac{4}{3}\), since \(u \cdot \omega \in L^\infty_t L^1_x\) within this range. Solutions constructed in Theorem \ref{thm:main} and Theorem \ref{thm:main2} are not distributional solutions to \eqref{EU vor}; however, they satisfy the so-called \textit{symmetrized weak vorticity formulation}, dating back to the works of Delort and Schochet:
\begin{equation}\label{EU vor weak}
    \int_{\mathbb{T}^2} \omega(x,t)\phi(x)\, dx - \int_{\mathbb{T}^2} \omega(x,0)\phi(x)\, dx
    = \int_0^t \int_{\mathbb{T}^2 \times \mathbb{T}^2} H_\phi(x,y) \omega(x,s) \omega(y,s)\, dx\, dy
\end{equation}
for every test function \(\phi \in C^\infty(\mathbb{T}^2)\), where $H_\phi(x,y) := (\nabla \phi(x) - \nabla \phi(y)) \cdot K(x,y)$ and \(K(x,y)\) is the Biot-Savart kernel.

\bigskip

In the study of 2D turbulence, the concept of \textit{enstrophy defect} plays an important role in connection with the \textit{enstrophy cascade} \cite{Eyink01}. This concept suggests that, in certain turbulent regimes, weak solutions to \eqref{EU vor} might not satisfy the \textit{local enstrophy balance}; that is, integral quantities such as
\begin{equation}
    \int \beta(\omega(x,t)) \, dx \, ,\quad \beta \in C_c^\infty(\mathbb{R}) \, ,
\end{equation}
might not be conserved.
The local enstrophy balance is closely connected with the so-called \textit{renormalization} property: we say that \(\omega \in L^\infty_t L^p_x\) is a renormalized solution to \eqref{EU vor} if
\begin{equation}\label{EU vor ren}
    \partial_t \beta(\omega) + \div(u \beta(\omega)) = 0 \, ,
    \quad \text{for every \(\beta \in C_c^\infty(\mathbb{R})\)} \, .
\end{equation}
Notice that the notion of renormalized solution to \eqref{EU vor} is meaningful for every \(\omega \in L^1_t L^p_x\) with \(p \ge 1\). It was observed in \cite{Eyink01} and further elaborated in \cite{MMN06} that any \(\omega \in L^\infty_t L^p_x\) with \(p \ge 2\) is a renormalized solution to \eqref{EU vor}, as a consequence of the DiPerna-Lions theory \cite{DiPernaLions}. Moreover, Crippa and Spirito \cite{CrippaSpirito15} have shown that vanishing viscosity solutions are renormalized for every \(p \ge 1\). In stark contrast, our Theorem \ref{thm:main} provides the first example of non-renormalized solutions to \eqref{EU} with vorticity \(\omega \in L^\infty_t L^p_x\) for some \(p > 1\).

In view of the recent result \cite{BrueColomboKumar}, one might guess that the renormalization property holds for \(p > \frac{3}{2}\) and might fail for \(p < \frac{3}{2}\). However, this is only a speculation, and we pose it as an open question.

\begin{prob}
    Find \( p^* \ge 1 \) such that any weak solution \( u \in L^\infty_t L^2_x \) to \eqref{EU} with \( \omega \in L^\infty_t L^p_x \) is renormalized (i.e., satisfies \eqref{EU vor ren}) for \( p > p^* \), while there are non-renormalized solutions for \( p < p^* \).
\end{prob}
To the best of the author's knowledge, the most accurate estimate to date is \( 1 + \frac{1}{6500} < p^* \le  2 \).

\subsection{A New Convex Integration Scheme}


The main obstacle to achieve a two dimensional vorticity that is $L^1$ integrable in space is the homogeneous nature of the perturbations in convex integration schemes. Typically, these perturbations are periodic with a large wavelength \(\lambda \gg 1\), which makes them appear homogeneous at scales \( r \gg 1/\lambda \). From the embedding theorem of Nash \cite{Nash} and the foundational works of De Lellis and Székelyhidi \cite{DeLellisSzekelyhidi09,DeLellisSzekelyhidi13}, every convex integration scheme to date possess this characteristics.

Some form of heterogeneity of perturbation has been introduced in convex integration schemes with intermittency, starting from the work of Buckmaster and Vicol \cite{BV19}. In these schemes, the \(\lambda\)-periodic structure is maintained, but the perturbations exhibit heterogeneity at a much smaller scale, \( 1/\mu \ll 1/\lambda \), depending on the extent of the intermittency. However, at larger scales \( r \gg 1/\lambda \), the perturbation remains homogeneous, which mantains the obstruction to achieve uniform \( L^1 \) integrability for the vorticity.



\bigskip

In this paper, we overcome this hurdle by drawing inspiration from our recent work \cite{BrueColomboKumar} on the linear transport equation
\begin{equation}\label{TR}
    \partial_t \rho + u \cdot \nabla \rho = 0 \, ,
    \quad
     \, \div u = 0 \, .
\end{equation}
with an incompressible velocity field \( u \in L^\infty_t W^{1,p}_x \) and density \(\rho \in L^\infty_t L^r_x\), which lies within the framework of DiPerna–Lions theory \cite{DiPernaLions}.

In this context, the convex integration approach has been applied to produce nonuniqueness and flexibility for the Cauchy problem associated with \eqref{TR}. However, this method encounters similar obstructions as in the Euler setting and cannot achieve the sharp range of well-posedness recently obtained in our work \cite{BrueColomboKumar}. In this work, we introduce a novel linear construction that generates nonunique solutions to \eqref{TR} beyond the range attainable by convex integration. The key feature enabling this is the spatial heterogeneity of both the density and the velocity field.

Inspired by this analogy, the first key idea we introduce in convex integration is to completely change the design of the perturbations by:
\begin{itemize}
    \item eliminating the \(\lambda\) periodicity,
    \item achieving truly heterogeneous perturbations at every scale.
\end{itemize}

		
		
		
			
			
			

Given the error cancellation mechanism in the convex integration method, which relies on the low-frequency interaction between highly oscillating perturbations, it seems impractical to use perturbations with the aforementioned characteristics. The key idea to overcome this challenge is to exploit the time variable to \textit{rebuild spatial oscillations through time averages}.

\bigskip

At a qualitative level, the principal part of the perturbation will be concentrated in a single small moving region at any given time. This approach retains the intermittent structure while losing periodicity. As is standard in convex integration, the primary component of the perturbation must almost solve the Euler equations. Therefore, we introduce another key idea: a new family of building blocks constructed from the \textit{Lamb-Chaplygin dipole}. These building blocks will incorporate several new features:

\begin{itemize}
\item \textit{Variable speed:} This is useful to rebuild the error at the previous step without employing slow functions. The latter are unnatural in our scheme since we no longer have fast and slow variables.

\item \textit{Variable support size:} As the building blocks are almost Euler solutions with a constant \( L^2 \) norm, the scaling of the Euler equations forces a variable size of the support.

\end{itemize}


\subsubsection{Overview}
In this section, we present a more detailed description of the new convex integration scheme based on the qualitative features described above. As with any convex integration scheme, given a solution \( u_q \) of the Euler--Reynolds system:
\begin{equation}
    \partial_t u_q + \div(u_q \otimes u_q) + \nabla p_q = \div(R_q) \, ,
    \quad
    \div u_q = 0 \, ,
\end{equation}
our goal is to produce a new velocity field \( u_{q+1} = u_q + v_{q+1} \), which solves the Euler--Reynolds system with a smaller error \( R_{q+1} \). As a first step we decompose $R_q$ into rank-one tensors:
\begin{align}
    -\div(R_q) = \div\left(\sum_{i=1}^4 a_i(x,t) \xi_i \otimes \xi_i\right) + \nabla P^{\, d},
\end{align}
where the coefficients \( a_i \), \( i \in \{1, 2, 3, 4\} \), are roughly the same size as \( R_q \), see Lemma \ref{lemma:error dec}.
Our perturbation \( v_{q+1} \) consists of only one building block moving in one of the \( \xi_i \) directions at any given time. More precisely, it will be \(\tau_{q+1}\)-periodic in time, and each interval of the form \([k\tau_{q+1}, (k+1)\tau_{q+1}]\), $k\in \Z$, will be divided into four sub-intervals of equal length, each associated with a different direction \(\xi_i\). See Figure \ref{fig: time interval} below.

\begin{figure}
\centering
 \includegraphics[scale = 0.8]{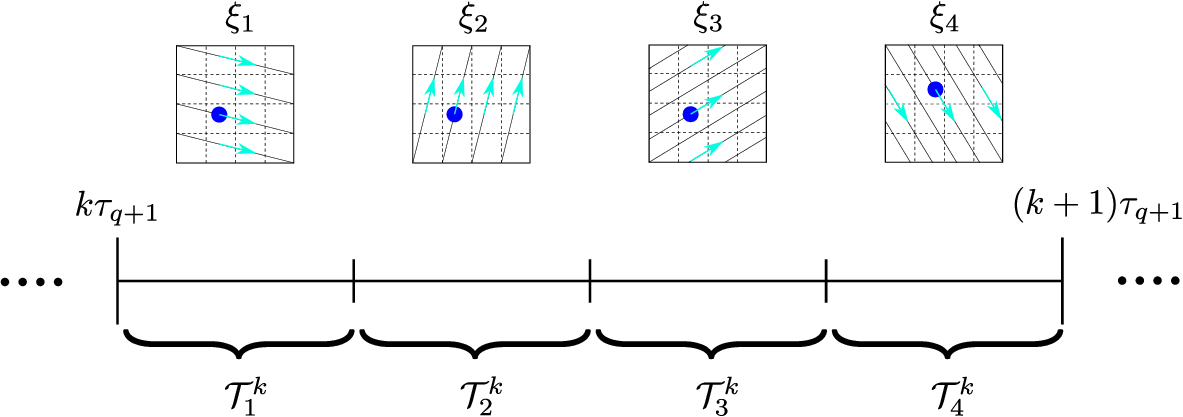}
 \caption{shows the time interval $[k \tau_q, (k+1) \tau_{q+1})$ which is further divided into four subintervals of equal length. The building block moves only in one of the direction $\xi_i$, $i \in \{1, \dots, 4\}$, on a given subinterval.}
 \label{fig: time interval}
\end{figure}

For the sake of simplicity, in rest of the outline we assume that \( R_q(x,t) = a(x,t) \xi \otimes \xi \), where \( \xi \) is a unit vector in a rationally dependent direction.

\subsubsection{The New Building Blocks}\label{sec:introbb}

Our building block is an almost exact solutions to the Euler system with source/sink term on $\mathbb{T}^2$:
\begin{equation}
\begin{cases}
\partial_t V + \div(V\otimes V) + \nabla P 
=  S \frac{d}{dt} \, (\eta(t) r(t))   +  \div(F) \, ,
\\
\div(V)  = 0\, ,
\end{cases}
\end{equation}
where $F\in C^\infty(\mathbb{T}^2; \R^{2\times 2})$ is a small symmetric tensor that will be enclosed in the new error $R_{q+1}$. The source/sink term $S \frac{d}{dt} \, (\eta(t) r(t))$ will be employed for the error cancellation through time-average, see the sketch in Section \ref{sec:ta-errcanc}. Up to leading order, the velocity field has the following structure 
\begin{align}\label{eq:V^p after W}
V(x,t) =  \eta(t) W_{r(t)}(x - x(t)) \, ,
\end{align}
where \( W_r(x) \) is the scaled core of the \textit{Lamb-Chaplygin dipole}. The center of the core, \( x(t) \), travels at variable speed in the direction \( \xi \), according to the ODE:
\begin{equation}
    x'(t) = \frac{\eta(t)}{r(x(t))}\xi \, , \qquad \text{where} \qquad r(x) = r_{q+1} \overline{a}(x).
\label{intro: size of bb}
\end{equation}
The function \( r(x)  \) is a space-dependent scale, proportional to a time average of \( a(x,t) \) over intervals of length \(\tau_{q+1} \ll 1\). The parameter \( r_{q+1} \) plays the role of the intermittency parameter in our construction. The function \( \eta(t) \) is a smooth cut-off with support of size \(\tau_{q+1}/4\); it serves to switch on and off the building block when swapping between the four directions. A fundamental feature of our construction is that \( \xi \) is chosen such that the time period of \( t \mapsto t\xi \) is large, \( \lambda_{q+1} \gg 1 \). This parameter will play the role of the frequency in our construction.

 \begin{figure}
\centering
\begin{tabular}{lc}
\begin{subfigure}{0.5\textwidth}
\centering
 \includegraphics[scale = 0.5]{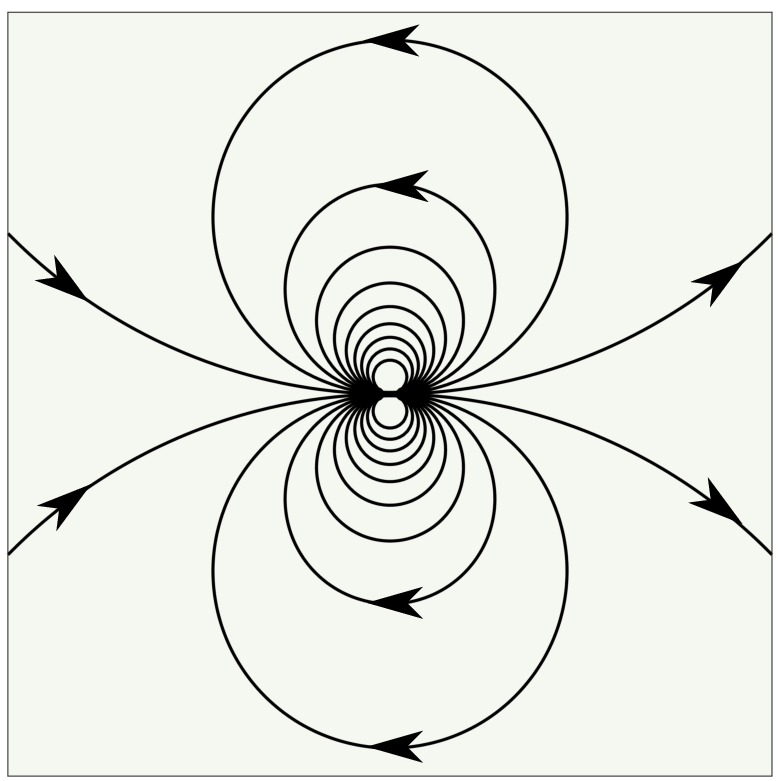}
\caption{}
\end{subfigure} &
\begin{subfigure}{0.5\textwidth}
\centering
 \includegraphics[scale = 0.5]{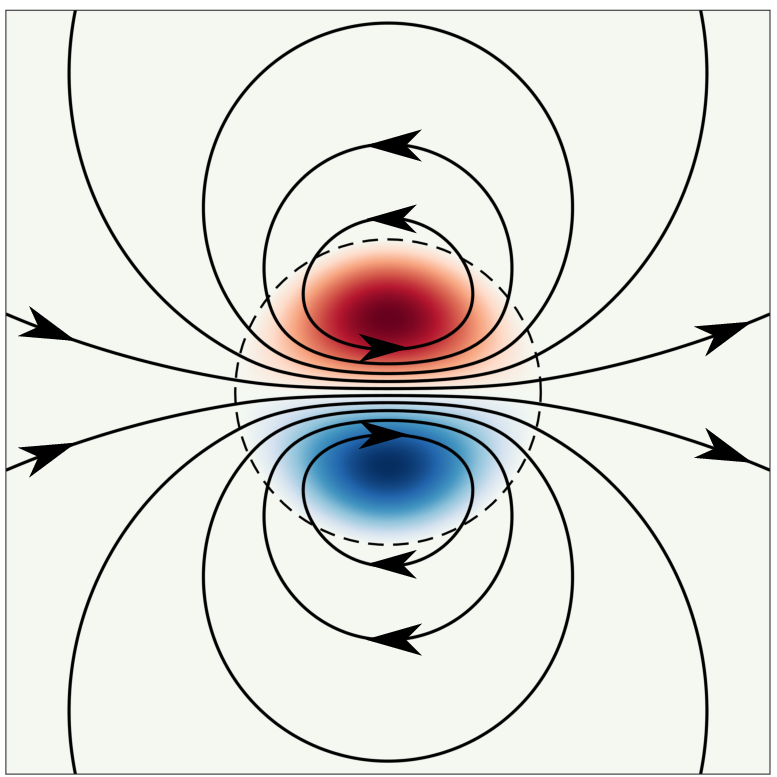}
\caption{}
\end{subfigure}
\end{tabular}
 \caption{shows the streamlines of (a) doublet flow, (b) Lamb--Chaplygin dipole. Panel (b) highlights the core region with a dashed circle, where the vorticity in the Lamb--Chaplygin dipole concentrates.}
 \label{fig1}
\end{figure}

\begin{remark}[Comparison with Intermittent Jets]
    Our new building blocks share some features with the \textit{intermittent jets} introduced in \cite{BuckColomboVicol}. The main differences are:
    \begin{itemize}
        \item Our new building blocks almost solve the Euler equations without the necessity of introducing time correctors.

        \item The support and velocity of our building blocks vary.

        \item The intermittent jets are shaped as ellipsoids in contrast with the more rounded design of ours.
    \end{itemize}  
    The third point is the most problematic in our construction. The ellipsoidal design serves to control the size of the divergence but worsens the size of the vorticity. This loss is irremediable. Our scheme is able to reach \(\omega \in L^\infty_t L^1_x\) using intermittent jets but cannot get past it.
\end{remark}

 \begin{figure}[h]
\centering
 \includegraphics[scale = 0.5]{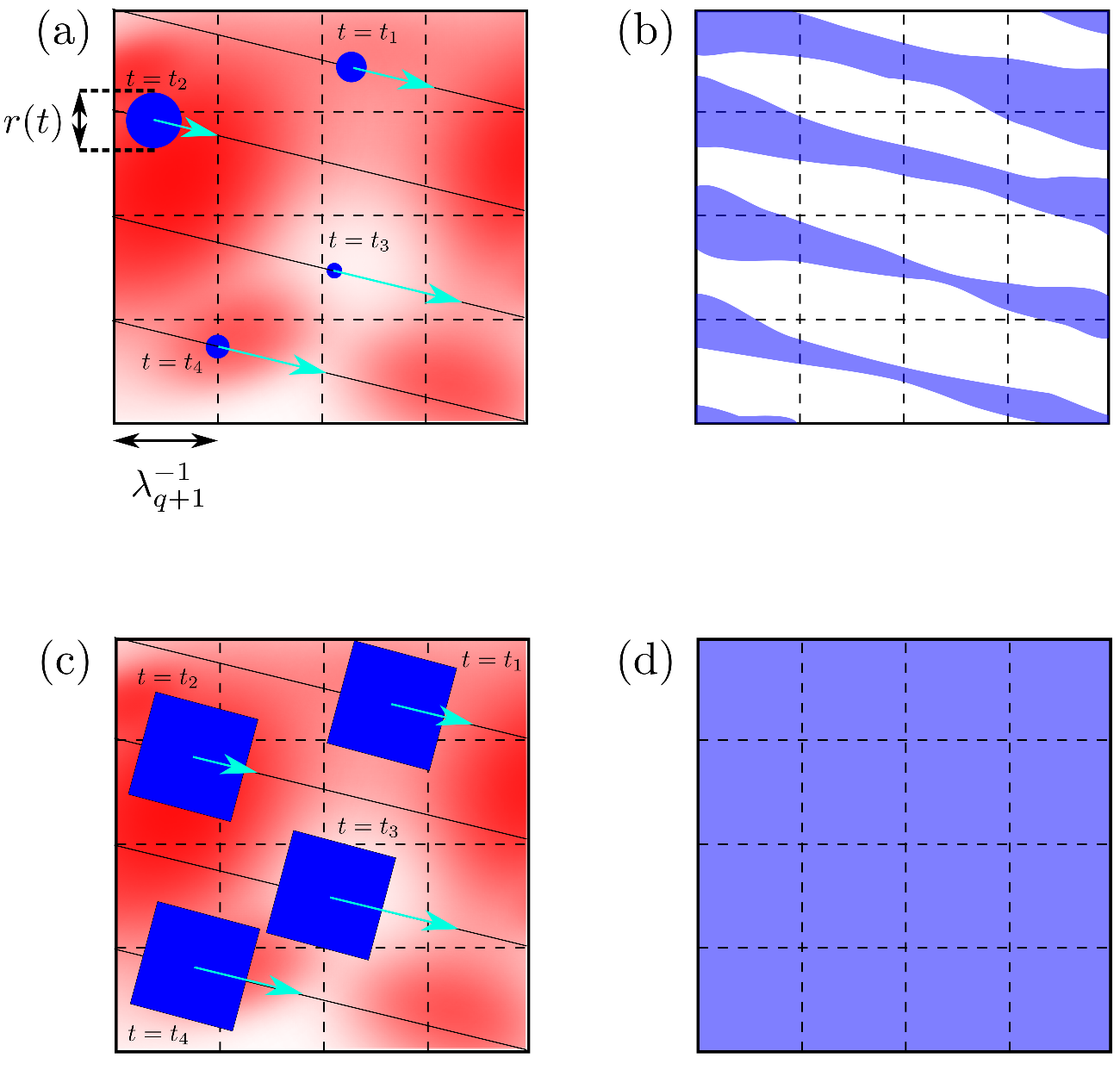}
 \caption{Panel (a) shows the building block at four different times as it traverses through the torus. The background red color is supposed to show the intensity of error. The size of the building block varies depending on the error according to the relation (\ref{intro: size of bb}). Panel (b) shows the trail of the building block, which is a $1/\lambda_{q+1}$ dense stripe and can be narrow in some regions. Panel (c) shows the auxiliary building block, whose size remains (order of $\lambda_{q+1}^{-1}$) fixed as it translates. Panel (d) shows the trail of the auxiliary building block, which is the entire $\mathbb{T}^2$.}
 \label{fig2}
\end{figure}

\subsubsection{Time Average and Error Cancellation}\label{sec:ta-errcanc}

In this section, we illustrate the key calculation: how to reconstruct the error using time averages of the source/sink term \( S \frac{d}{dt} (\eta(t) r(t)) \). For the sake of simplicity, let us assume that \( S(x,t) = U(x - x(t)) \) for some profile \( U \) independent of time. We should stress that this assumption is not satisfied in our framework; however, we will demonstrate how to compensate for this deviation by introducing an \textit{auxiliary building block} (see Section \ref{sec:auxiliary}).

On every time interval \(\mathcal{T} \subset [0,\infty]\) of length \(\tau_{q+1}/4\) where \(\eta(t)\) is supported, the time-average of the source term will be
\begin{equation}
    \begin{split}
        \dashint_{\mathcal{T}} (\eta(t) r(t))' U(x - x(t)) \, dt 
        &=
         - \dashint_{\mathcal{T}} \eta(t) r(t) \frac{d}{dt} U(x - x(t)) \, dt
        \\&
        = \dashint_{\mathcal{T}} \eta(t)^2 (\xi \cdot \nabla) U(x - x(t))\, dt 
        \\&
        = \div \left( \dashint_{\mathcal{T}} \eta(t)^2 U(x-x(t)) \otimes \xi  \, dt \right)
    \end{split}
\end{equation}
We will carefully design $r(x)$, $\eta(t)$, and $U(x)$ to have
\begin{equation}
    \dashint_{\mathcal{T}} \eta(t)^2 U(x-x(t)) \otimes \xi  \, dt
    = a(x,t) \xi \otimes \xi  + G
\end{equation}
where $G\in C^\infty(\mathbb{T}^2; \R^{2\times 2})$ is a new small error.

In other words, the time-mean of the source term is going to match exactly the error at the previous stage of the iteration. Thus, we only need to address the mean-free part of the source term. This is achieved by introducing an ad hoc time corrector \( Q_{q+1}(x,t) \); see Section \ref{sec:timecorr}.




\subsection*{Acknowledgements:}
EB would like to express gratitude for the financial support received from Bocconi University.
MC is supported by the Swiss State Secretariat for Education, Research and lnnovation (SERI) under contract number MB22.00034 through the project TENSE.

  \section{Principal Building Block}
\label{sec: principal bb}

In this section, we construct the principal building block velocity field for our convex integration scheme. The main ingredient of our construction is the {\it Lamb--Chaplygin dipole}, an exact solution of the 2D Euler equation on $\mathbb{R}^2$, which translates with constant velocity without undergoing deformation. The Lamb--Chaplygin dipole consists of two regions: a circular inner core of some fixed radius $r$ where the vorticity of opposite signs concentrate, and the complement of the core where the velocity field is irrotational. The Lamb--Chaplygin dipole can be understood as the desingularization of a potential flow known as the \textit{doublet flow}.

We also introduce a cutoff and gluing procedure that enables us to relocate this dipole from $\mathbb{R}^2$ to $\mathbb{T}^2$, while ensuring that it remains an approximate solution of the Euler equation. In addition to this relocation, we will consider a dipole on $\mathbb{T}^2$ whose core size $r$ expands or contracts as it translates.  The effect of this is a source/sink term in the approximate Euler equation, which will ultimately be used to cancel the error term in the convex integration scheme.

\bigskip



\subsection{Doublet Flow on $\mathbb{R}^2$}
\label{subsec: doublet}
A doublet flow is a divergence-free potential flow on $\mathbb{R}^2 \setminus \{0\}$. We denote $x=(x_1,x_2)\in \R^2$ points in the Euclidean plane.
In the complex notation, the potential $\wt{\Phi}$ and the stream function $\wt{\Psi}$ of the doublet flow are given by
\begin{align}
\wt{\Phi}(x_1, x_2) + i \wt{\Psi}(x_1, x_2) \coloneqq - \frac{1}{z}, \quad \text{where} \quad z = x_1 + i x_2\, .
\label{principal bb: phi psi tilde}
\end{align}
We define the velocity field $\wt{V} = (\wt{V}_1, \wt{V}_2)$ and the pressure as
\begin{align}
\wt{V} \coloneqq \nabla^\perp \wt{\Psi} = \nabla \wt{\Phi}\, , \qquad \wt{P} \coloneqq \partial_1 \wt{\Phi} - \frac{|\nabla \wt{\Phi}|^2}{2}.
\label{principal bb: V tilde}
\end{align}
where $\nabla^\perp = (-\partial_2,  \partial_1)$.
From our definition, it is clear that $\wt{V}_1(x_1, x_2) - i \wt{V}_2(x_1, x_2) = {z^{-2}}
$, that $\wt{V} $ is $-2$-homogeneous, while the pressure is not homogeneous, and that they solve
\begin{equation}\label{eq:doublet1}
    -\partial_1 \wt{V} + (\wt{V} \cdot \nabla) \wt{V} + \nabla \wt{P} = 0 \, ,
    \quad \text{on $\R^2 \setminus \{0\}$}\, .
\end{equation}

%
Since \begin{align}
\wt{V}_1(x_1,x_2)  x = -\nabla^\perp \left(\frac{x_1 x_2}{x_1^2 + x_2^2}\right) \qquad \text{and} \qquad \wt{V}_2(x_1,x_2)  x = - \nabla^\perp \left(\frac{x_2^2}{x_1^2 + x_2^2}\right)\, ,
\end{align}
we have
\begin{align}
\div (\wt{V} \otimes x) = 0.
\label{principal bb: div tensor V x} \qquad \text{for $x \in \mathbb{R}^2 \setminus \{0\}$.}
\end{align}

\subsection{Desingularization of the Doublet Flow: The Lamb--Chaplygin Dipole}

To describe the velocity field in this section, we use polar coordinates $\rho$ and $\theta$, where $x_1 = \rho \cos\theta$ and $x_2 = \rho \sin \theta$. The Lamb--Chaplygin dipole is a desingularization of the doublet flow \cite{meleshko1994chaplygin, lamb1924hydrodynamics}. It consists of two regions: 
\begin{itemize}
    \item[(i)] $\rho \leq 1$ the desingularized region, where the vorticity is nonzero,

    \item[(ii)] $\rho \geq 1$, where the flow is potential given in section \ref{subsec: doublet}.
\end{itemize}
We define the stream function $\ol{\Psi}$ for this flow as
\begin{align}
\ol{\Psi} \coloneqq 
\begin{cases}
\rho \sin \theta - \frac{2 J_1(b\rho)}{b J_0(b)} \sin \theta \qquad & \text{when} \quad \rho \leq 1, \\
\frac{\sin \theta}{\rho} \qquad & \text{when} \quad \rho \geq 1. 
\end{cases}
\label{principal bb: overbar Psi}
\end{align}
Here, $J_0$ and $J_1$ are the Bessel functions of first kind of zero and first order respectively. The number $b \approx 3.831705970\dots$ is the first non-trivial zero of $J_1$. For $\rho \geq 1$
we have
$\ol{\Psi} = \wt{\Psi}.$
Using this stream function, we define the velocity field $\ol{V}= (\ol{V}_1, \ol{V}_2) : \mathbb{R}^2 \to \mathbb{R}^2$ and the pressure as
\begin{align}
\ol{V} \coloneqq \nabla^\perp \ol{\Psi}, \qquad \ol{P} \coloneqq \ol{V}_1 - \frac{\ol{V}_1^2 + \ol{V}_2^2}{2} - 1_{|x| \leq 1} \frac{b^2}{2} (\ol{\Psi} - x_2)^2  \quad \text{for } x\in\mathbb R^2 \,
.
\label{principal bb: overbar velocity}
\end{align}
The velocity field defined in such a way belongs to $C^{1, 1}$, i.e., the derivative is Lipschitz, and it solves the equation \eqref{eq:doublet1}. Indeed, the Lamb--Chaplygin dipole has the property that $\omega = b^2 (\ol{\Psi} - x_2)$ \cite{meleshko1994chaplygin, lamb1924hydrodynamics} for $\rho \leq 1$, and $\omega = 0$ for $\rho > 1$. From the expression of $\ol{\Psi}$ in (\ref{principal bb: overbar Psi}), we then see $\omega \in C^{0, 1}$. Because we also have $-\Delta \ol{\Psi} = \omega$ and we have only one mode in $\theta$, hence this equation is a second order ODE in the radial variable. This means $\Psi \in C^{2, 1}$, which then gives the required regularity for the velocity field.\\


Finally, we define the rescaled version of $\ol{V}$ and $\ol{P}$ for a given $r > 0$ as
\begin{align}
\ol{V}_{r} \coloneqq \frac{1}{r}\ol{V}\left(\frac{x}{r}\right), \quad
\ol{P}_{r} \coloneqq \frac{1}{r^2}\ol{P}\left(\frac{x}{r}\right).
\label{principal bb: scaled lamb chaplygin}
\end{align} 
By scaling, they solve
\begin{align}- \frac{1}{r} \partial_1 \ol{V}_{r} + (\ol{V}_{r} \cdot \nabla) \ol{V}_{r} + \nabla \ol{P}_{r} = 0\, .
\label{principal bb: scaled lamb chaplygin eqn}
\end{align}
and by \eqref{principal bb: V tilde} and the $-1$-homogeneity of $\wt{\Phi}$ we have
\begin{align}
\ol{V}_r(x) =  \frac{1}{r}\nabla^\perp \ol{\Psi}\left(\frac{x}{r}\right) =\frac{1}{r}\nabla^\perp \wt{\Psi}\left(\frac{x}{r}\right) = \frac{1}{r} \nabla \wt{\Phi}\left(\frac{x}{r}\right) = r \nabla \wt{\Phi}(x)
\quad \text{when } |x| > r.
\label{principal bb: lc outside rad r}
\end{align}
\begin{align}
\ol{P}_r(x) = \frac{1}{r^2}\ol{P}\left(\frac{x}{r}\right) =  {\partial_1 \wt \Phi}  + r^2\frac{|\nabla \wt \Phi|^2}{2}, \quad \text{when } |x| > r.
\label{principal bb: pressure outside rad r}
\end{align}
Since $\ol{V}$ is $C^{1,1}$ and $\ol{V}_r$ coincides with $r \nabla \wt{\Phi}$ outside $B_r$,  for $n=0,1,2$ we have
\begin{align} 
|\nabla^n \wt{\Phi}| \leq C\frac{1}{|x|^{n+1}}, \quad 
|\nabla^n \ol{V}_r| \leq C \frac{r}{\max\{r,|x|\}^{n+2}} \quad \text{for } x\in \mathbb R.
\label{principal bb: estimate on Vbar}
\end{align}

\subsection{Decomposition of the Lamb--Chaplygin Vortex}
\label{subsec:Lamb-Chaplygin}

Fix $\alpha\in (0,1)$. We consider a smooth cutoff function $\rchi : [0, \infty) \to \mathbb{R}$ such that $\rchi(x)=0$ for $x\leq 1 $, and $\rchi(x) = 1$ for $x\geq 2$. We define a rescaled verision $\rchi_\alpha : [0, \infty) \to \mathbb{R}$ of this cutoff function as
\begin{align}\label{def:cutoff}
\rchi_\alpha(x):= \rchi\Big(\frac{|x|}{r^\alpha}\Big) \, .
\end{align}
Next, we define a pressure field $\Pi_r : \mathbb{R}^2 \to \mathbb{R}$: 
\begin{align}
\Pi_r(x) \coloneqq 
r \chi_\alpha(x) \wt{\Phi}(x)\, ,
\label{principal bb: req pressure}
\end{align}
and a velocity field $W_r : \mathbb{R}^2 \to \mathbb{R}^2$:
\begin{align}
W_r(x) \coloneqq 
\ol{V}_r(x) - \nabla \Pi_r(x) =
\ol{V}_r(x) - r\nabla (\rchi_\alpha(x) \, \wt{\Phi}(x)).
\label{principal bb: decomposition}
\end{align}
Notice that the velocity field $W_r$ is not divergence-free. Indeed,
\begin{align}\label{divWr}
\div W_r = - r \Delta (\rchi_\alpha \wt{\Phi}) = r\Delta \rchi_\alpha \wt{\Phi} + 2 r \nabla \rchi_\alpha \cdot \nabla \wt{\Phi} \, .
\end{align}

\begin{lemma}
\label{lemma: properties Pi Wr}
Let $\Pi_r$ and $W_r$ be defined as in (\ref{principal bb: req pressure}) and (\ref{principal bb: decomposition}), respectively. Then the following statements hold.
\begin{enumerate}[label = (\roman*)]
\item $\supp \Pi_r \subseteq \mathbb{R}^2 \setminus B_{r^\alpha}(0)$, and $\supp W_r \subseteq \ol{B}_{2 r^\alpha}(0)$,

\item $\norm{W_r}_{L^1} \leq C(\alpha) r |\log r|$, and $\norm{W_r}_{L^p} \leq C(p) r^{\frac{2}{p} - 1}$ for every $p \in (1, \infty]$,
\item 
$
\norm{D W_r}_{L^p} +r\norm{D^2 W_r}_{L^p} \leq C(p) r^{\frac{2}{p} - 2}$, and  $\norm{\div W_r}_{L^p} \leq C(p) \, r^{1 - \alpha} \, r^{\alpha \left(\frac{2}{p} - 2\right)}$ for every $p\in [1,\infty]$.
\item 
\begin{align}
\frac{1}{r}\int_{\mathbb{R}^2} W_r = 
\begin{pmatrix}
2 \pi \\
0
\end{pmatrix}.
\label{principal bb: main Wr integral}
\end{align}
\end{enumerate}
\end{lemma}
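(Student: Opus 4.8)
The plan is to verify each of the four items by unwinding the definitions $W_r = \ol{V}_r - r\nabla(\rchi_\alpha \wt\Phi)$ and $\Pi_r = r\rchi_\alpha\wt\Phi$, and then using the pointwise bounds \eqref{principal bb: estimate on Vbar} together with the known support of $\rchi_\alpha$.

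\medskip

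\textbf{Item (i).} Since $\rchi_\alpha(x)=0$ for $|x|\le r^\alpha$, we have $\supp\Pi_r\subseteq\R^2\setminus B_{r^\alpha}(0)$ immediately from \eqref{principal bb: req pressure}. For $W_r$, recall $\ol V_r$ is supported in $\ol B_r(0)$ (indeed $\ol V_r = r\nabla\wt\Phi$ outside $B_r$, but wait — that is \emph{not} compactly supported), so the point is rather: on $|x|\ge 2r^\alpha$ we have $\rchi_\alpha\equiv 1$, so there $W_r = \ol V_r - r\nabla\wt\Phi = 0$ by \eqref{principal bb: lc outside rad r} (using $r^\alpha \ge r$ since $\alpha<1$ and $r$ small, hence $|x|>r$). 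Thus $\supp W_r\subseteq \ol B_{2r^\alpha}(0)$. I would note the mild hypothesis $r<1$ (or $r$ small) is in force so that $r^\alpha\ge r$.

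\medskip

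\textbf{Items (ii)--(iii).} These are pure size estimates. Split $W_r$ on the two regions $\{|x|\le 2r^\alpha\}$. Use $|\nabla^n\ol V_r|\le C r/\max\{r,|x|\}^{n+2}$ and $|\nabla^n\wt\Phi|\le C|x|^{-n-1}$ from \eqref{principal bb: estimate on Vbar}; for the cutoff term use $|\nabla^k\rchi_\alpha|\le C r^{-\alpha k}$ supported on the annulus $r^\alpha\le|x|\le 2r^\alpha$. For the $L^p$ norm of $\ol V_r$: split into $|x|\le r$ (where $|\ol V_r|\lesssim 1/r$, volume $\lesssim r^2$, contributing $r^{2/p-1}$) and $r\le |x|\le 2r^\alpha$ (where $|\ol V_r|\lesssim r/|x|^2$, and $\|r|x|^{-2}\|_{L^p(r<|x|<2r^\alpha)}$ is of order $r^{2/p-1}$ for $p>1$ because the integral $\int_r |x|^{-2p}|x|\,d|x|$ is dominated by the lower endpoint). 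The logarithm in the $L^1$ bound comes precisely from $\int_r^{2r^\alpha}|x|^{-2}\,|x|\,d|x| = \log(2r^{\alpha-1})\sim|\log r|$. The $r\nabla(\rchi_\alpha\wt\Phi)$ term, expanded by Leibniz, is bounded on the annulus by $r\cdot(r^{-\alpha}|x|^{-1} + |x|^{-2})\lesssim r\cdot r^{-2\alpha}$ there, and its $L^p$ norm over that annulus (volume $\sim r^{2\alpha}$) is $\lesssim r^{1-2\alpha}r^{2\alpha/p} = r\cdot r^{\alpha(2/p-2)}$, which is lower order than $r^{2/p-1}$ for small $r$ since $\alpha<1$; similarly for the derivative bounds in (iii), differentiating loses a factor $r^{-1}$ from $\ol V_r$ on its core and at worst $r^{-\alpha}$ from each extra derivative of the cutoff. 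For $\div W_r$ use \eqref{divWr}: $\div W_r = r\Delta\rchi_\alpha\,\wt\Phi + 2r\nabla\rchi_\alpha\cdot\nabla\wt\Phi$, supported on the annulus, bounded by $r(r^{-2\alpha}|x|^{-1}+r^{-\alpha}|x|^{-2})\lesssim r^{1-3\alpha}$ on $|x|\sim r^\alpha$; taking $L^p$ over volume $\sim r^{2\alpha}$ gives $r^{1-3\alpha+2\alpha/p} = r^{1-\alpha}r^{\alpha(2/p-2)}$, matching the claim. I would organize this as one lemma-internal computation handling general $\nabla^n$ at once to avoid repetition.

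\medskip

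\textbf{Item (iv).} Here I would compute $\tfrac1r\int_{\R^2}W_r = \tfrac1r\int_{\R^2}\ol V_r - \int_{\R^2}\nabla(\rchi_\alpha\wt\Phi)$. The second integral vanishes: $\nabla(\rchi_\alpha\wt\Phi)$ is an exact gradient of a function that is $\rchi_\alpha\wt\Phi\sim|x|^{-1}\to 0$ at infinity and vanishes near $0$, so its integral over $\R^2$ is zero (e.g.\ by the divergence theorem on $B_R\setminus B_\epsilon$ and letting $R\to\infty$; the boundary term at radius $R$ is $O(R\cdot R^{-1})\cdot$? — more carefully, $\int_{B_R}\nabla f = \int_{\partial B_R} f\,\nu$ has size $\lesssim R\cdot R^{-1} = O(1)$, so one must argue more carefully). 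Actually the clean way: $\int_{\R^2}\partial_j(\rchi_\alpha\wt\Phi)\,dx$ — integrate the $x_j$ variable first; since $\rchi_\alpha\wt\Phi\to 0$ as $x_j\to\pm\infty$ for a.e.\ fixed transverse coordinate, this is $0$. Hence $\tfrac1r\int W_r = \tfrac1r\int_{\R^2}\ol V_r$, and by rescaling $\tfrac1r\int_{\R^2}\ol V_r(x)\,dx = \tfrac1r\cdot r\cdot r\int_{\R^2}\tfrac1r\ol V(y)\,\tfrac{dy}{?}$ — let me just say: $\int_{\R^2}\ol V_r\,dx = \int_{\R^2}\tfrac1r\ol V(x/r)\,dx = r\int_{\R^2}\ol V(y)\,dy$, so $\tfrac1r\int_{\R^2}W_r = \int_{\R^2}\ol V$. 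Finally $\int_{\R^2}\ol V = (2\pi,0)$ is a known property of the Lamb--Chaplygin dipole (its total momentum), which I would either cite from \cite{meleshko1994chaplygin, lamb1924hydrodynamics} or derive directly from \eqref{principal bb: overbar Psi} using $\ol V=\nabla^\perp\ol\Psi$ and integrating by parts — $\int\nabla^\perp\ol\Psi = \int\nabla^\perp(\ol\Psi - x_2)$ since $\nabla^\perp x_2 = -e_1$ integrates to a boundary term that must be handled, plus the decay of $\ol\Psi - x_2$ (which is compactly supported in $B_1$!), giving a clean finite computation via Bessel function identities ($\int_0^1 J_1(b\rho)\rho\,d\rho$ type integrals).

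\medskip

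\emph{Main obstacle.} The delicate point is item (iv): the total-integral identity $\int_{\R^2}\ol V = (2\pi,0)$ and the vanishing of $\int\nabla(\rchi_\alpha\wt\Phi)$ both involve borderline-integrable tails ($\ol V_r$ and $\wt\Phi$ decay like $|x|^{-2}$ and $|x|^{-1}$ respectively, so individual components are not absolutely integrable against the $|x|$ Jacobian in the naive ordering), so one must be careful about the order of integration / the precise sense in which the improper integral converges. The cleanest route is to exploit that $\ol\Psi - x_2$ is supported in $\ol B_1$ and that $\rchi_\alpha\wt\Phi$ vanishes near the origin, reducing everything to honest computations on bounded or tail regions where Fubini applies coordinate-wise. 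The size estimates (ii)--(iii), by contrast, are routine once the region splitting and the bounds \eqref{principal bb: estimate on Vbar} are in hand; the only thing to watch is tracking which power of $r$ versus $r^\alpha$ dominates, which is always resolved by $0<\alpha<1$ and $r$ small.
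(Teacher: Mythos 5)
Items (i)--(iii) of your proposal are correct and follow essentially the paper's own route: identify the supports, use the pointwise bounds \eqref{principal bb: estimate on Vbar} together with the cutoff bounds $|\nabla^k\rchi_\alpha|\le Cr^{-k\alpha}$ on the annulus $\{r^\alpha\le|x|\le 2r^\alpha\}$, and track which of $r$, $r^\alpha$ dominates. Nothing to add there.

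Item (iv), however, has a genuine gap. You split $\tfrac1r\int W_r=\tfrac1r\int \ol{V}_r-\int\nabla(\rchi_\alpha\wt\Phi)$, but both pieces decay only like $|x|^{-2}$ in $\mathbb R^2$, so neither integral is absolutely convergent and each can only be assigned a value relative to a fixed regularization; the two pieces must then be regularized the same way. Your proposal does not do this: you evaluate the gradient piece by iterated (coordinate-wise) integration, getting $0$, and the dipole piece by quoting ``$\int_{\mathbb R^2}\ol V=(2\pi,0)$, the total momentum of the Lamb--Chaplygin dipole.'' That quoted value is not a well-defined constant. For instance, as a principal value over balls the gradient piece is \emph{not} zero: for $R\ge 2r^\alpha$, $\int_{B_R}\nabla(\rchi_\alpha\wt\Phi)\,dx=\int_{\partial B_R}\wt\Phi\,\nu\,d\mathcal H^1=(-\pi,0)$, while the same ball regularization gives $\tfrac1r\int_{B_R}\ol V_r=(\pi,0)$; only their difference is regularization-independent and equals $(2\pi,0)$. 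In particular, the classical ``momentum'' of the dipole, computed over balls (or via the vorticity impulse), is $(\pi,0)$, not $(2\pi,0)$ --- this is the standard factor-of-two ``half the impulse lives at infinity'' phenomenon --- so combining it with your value $0$ for the gradient piece would produce the wrong answer. Your fallback derivation is also flawed on a factual point: $\ol\Psi-x_2$ is \emph{not} compactly supported in $\ol B_1$ (only the vorticity $b^2(\ol\Psi-x_2)\mathbf 1_{B_1}$ is); outside $B_1$ one has $\ol\Psi-x_2=(\rho^{-1}-\rho)\sin\theta$, which grows linearly, so the advertised ``clean finite computation'' does not exist as described, and $\int\nabla^\perp x_2$ diverges outright.

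The fix is to work with a quantity that is honestly integrable. The paper does this by exploiting that $W_r$ itself is compactly supported: integrating by parts against $x$ gives $\tfrac1r\int W_r=-\tfrac1r\int x\,\div W_r\,dx$, and by \eqref{divWr} $\div W_r$ is explicit and supported in the annulus $\{r^\alpha\le|x|\le 2r^\alpha\}$, so everything reduces to absolutely convergent polar-coordinate integrals of $\rchi_\alpha'$, which evaluate to $(2\pi,0)$ as in \eqref{principal bb: new calc avg W r}. Alternatively, if you want to keep your decomposition, fix a single regularization (say limits of $\int_{B_R}$) for \emph{both} pieces and evaluate each by the divergence theorem on $B_R$, using \eqref{principal bb: lc outside rad r} and the explicit boundary values of $\wt\Phi$ and $\ol\Psi$ on $\partial B_R$; then the two boundary fluxes combine to $(2\pi,0)$, with no appeal to any ``known momentum.''
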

\begin{proof}
The item (i) follows from the definitions (\ref{principal bb: req pressure}) and (\ref{principal bb: decomposition}) and noticing
(\ref{principal bb: lc outside rad r}).
From (\ref{principal bb: estimate on Vbar}) and \eqref{def:cutoff}, we obtain the following estimates
\begin{align}
|\Pi_r| \leq C \frac{r}{|x|}\, ,
\qquad
|\nabla \Pi_r| \leq C \frac{r}{r^\alpha} \frac{1}{|x|} + C \frac{r}{|x|^2}\, , \qquad 
|\nabla^2 \Pi_r| \leq C \frac{r}{r^{2 \alpha}} \frac{1}{|x|} + C \frac{r}{|x|^3}\, ,
\label{principal bb: estimate Pi nn}
\end{align}
for every $x\in \R^2$.
As $W_r \equiv \ol{V}_r$ when $x \in B_{r^\alpha}(0)$, from (\ref{principal bb: estimate on Vbar}), we conclude that
\begin{align}
|W_r| \leq C \frac{1}{r} \quad \text{when} \quad |x| \leq r, \qquad 
 |W_r| \leq C \frac{r}{|x|^2} \quad \text{when} \quad r < |x| \leq 2r^\alpha \, .
 \label{principal bb: est 1 Wr}
\end{align}
Combining (i) with (\ref{principal bb: est 1 Wr}), (ii) easily follows.

\bigskip

By (\ref{principal bb: estimate on Vbar}) and (\ref{principal bb: decomposition}), we note that $|D W_r| + r |D^2 W_r| \leq C r^{-2}$ when $ |x| \leq r$, and $|D W_r|\leq r |x|^{-3}$, $|D^2 W_r|  \leq r |x|^{-4}$ when $r < |x| \le 2r^\alpha$. 
Using this information along with the fact that $\supp W_r \subseteq \ol{B}_{2 r^\alpha}(0)$ and $ 0 < \alpha < 1$, the required estimate on the $L^p$ norm of $D W_r$ follows. Next, the $\div W_r$ is nonzero only in the annulus $B_{2 r^\alpha}(0) \setminus B_{r^\alpha}(0)$ and that $|\div W_r| \leq C r^{1 - 3 \alpha}$, which completes the proof of (iii).



\bigskip

Finally, we compute the space integral of $\frac{1}{r}W_r$. From equation \eqref{divWr} and integrating by parts twice, we obtain
\begin{align}
\frac{1}{r}\int_{\mathbb{R}^2} W_r \, {\rm d} x & = - \frac{1}{r}\int_{\mathbb{R}^2} \div W_r
\begin{pmatrix}
    x_1 \\
    x_2
\end{pmatrix} \, {\rm d} x  \nonumber \\
& = \int_{\mathbb{R}^2} \left(\Delta \rchi_\alpha \wt{\Phi} + 2 \nabla \rchi_\alpha \cdot \nabla \wt{\Phi}\right)
\begin{pmatrix}
    x_1 \\
    x_2
\end{pmatrix} \, {\rm d} x
\\ \nonumber
& = -\int_{\mathbb{R}^2}   \wt{\Phi}
\nabla \rchi_\alpha 
{\rm d} x \; + \;
 \int_{\mathbb{R}^2} \nabla \rchi_\alpha \cdot \nabla \wt{\Phi} 
\begin{pmatrix}
    x_1 \\
    x_2
\end{pmatrix}  \, {\rm d} x  \nonumber \\
& = \int_{\mathbb{R}^2}   
\frac{\rchi_\alpha^\prime}{\rho} 
\begin{pmatrix}
\cos^2 \theta \\
\cos \theta \, \sin \theta
\end{pmatrix}
\, \rho \, {\rm d} \rho \, {\rm d} \theta \; + \;
\int_{\mathbb{R}^2}   
\frac{\rchi_\alpha^\prime}{\rho} 
\begin{pmatrix}
\cos^2 \theta \\
\cos \theta \, \sin \theta
\end{pmatrix}
 \, \rho \, {\rm d} \rho \, {\rm d} \theta \nonumber \\
& = 
\begin{pmatrix}
2 \pi \\
0
\end{pmatrix}.
\label{principal bb: new calc avg W r}
\end{align}
In the fourth line, we used the polar coordinate system $(\rho, \theta)$ along with the identities
\begin{align}
\nabla \rchi_\alpha = \begin{pmatrix}
\cos \theta \\
\sin \theta
\end{pmatrix}\, \rchi^\prime_\alpha, 
\qquad
\wt{\Phi} = - \frac{\cos \theta}{\rho}, \qquad \nabla \rchi_{\alpha} \cdot \nabla \wt{\Phi} = \partial_{\rho} \rchi_{\alpha} \partial_\rho \wt{\Phi} 
= \frac{\rchi_\alpha^\prime}{\rho^2} \cos \theta.
\end{align}
\end{proof}

\subsection{Compactly Supported Approximate Solution on $\mathbb R^2$}

In this section, we demonstrate that the compactly supported velocity field $W_r$ constructed in Section~\ref{subsec:Lamb-Chaplygin} (when translated with speed $r^{-1}$ in the $x_1$ direction) is an approximate solution to the momentum part of the Euler equation.

In the forthcoming sections, we will work with building blocks traveling with nonconstant speed, which will be achieved by varying the radius $r$ in time. Thus, the dependence of $W_r$ on the scale parameter $r>0$ will play a central role. A fundamental identity is given by \eqref{principal bb: eqn partial r W_r}, which involves the derivative with respect to $r>0$.

\begin{proposition}[Constant-Speed Building Block, Principal Part]
\label{prop:bb constant spead}
Fix $\alpha\in (0,1)$.
Let the velocity field $W_r$ be defined in (\ref{principal bb: decomposition}). Then there exist pressure fields $P_1, P_2 \in C^{1, 1}(\mathbb{R}^2; \mathbb{R}^2)$ and tensors $F_1, F_2 \in C^{1, 1}(\mathbb{R}^2; \mathbb{R}^{2\times 2})$, such that
\begin{align}
 - \frac{1}{r} \partial_1 W_r + \div (W_r \otimes W_r) + \nabla P_1 = \div(F_1), \label{principal bb: eqn W_r} 
 \\
 \partial_r W_r = \frac{1}{r} W_r + \nabla P_2 + \div(F_2). \qquad \quad
 \label{principal bb: eqn partial r W_r} 
\end{align}
Moreover, the following properties hold:
\begin{enumerate}[label = (\roman*)]
\item 
$\supp P_1, \, \supp P_2, \, \supp F_1, \, \supp F_2 \subseteq B_{2 r^\alpha}(0)$,

\item $\norm{F_1}_{L^{p}} \le C(p) r^{2 - 2 \alpha} \, r^{\alpha \left(\frac{2 }{p} - 2 \right)}$ and $\norm{F_2}_{L^{p}} \leq C(p) r^{\frac{2}{p} - 1}$ for every $p\in (1,\infty)$.

\item $\norm{\nabla P_2}_{L^\infty} + \norm{\div(F_2)}_{L^\infty}  +r\norm{\nabla^2 P_2}_{L^\infty}+r\norm{D\div(F_2)}_{L^\infty}\le C r^{-2}$.
\end{enumerate}
\end{proposition}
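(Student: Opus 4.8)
The plan is to verify \eqref{principal bb: eqn W_r} and \eqref{principal bb: eqn partial r W_r} by reducing them to identities already available for the scaled Lamb--Chaplygin velocity $\ol V_r$, tracking the extra terms produced by the correction $-\nabla \Pi_r = -r\nabla(\rchi_\alpha \wt\Phi)$ introduced in \eqref{principal bb: decomposition}, and absorbing these extra terms into a pressure and a symmetric tensor. The key structural fact that makes this work is that $\Pi_r$ is a \emph{harmonic} potential away from the annulus: on $\R^2\setminus B_{2r^\alpha}$ one has $\chi_\alpha\equiv 1$ and $\wt\Phi$ harmonic (it is the real part of $-1/z$), so $\div W_r = -r\Delta(\chi_\alpha\wt\Phi)$ is supported in the annulus $B_{2r^\alpha}\setminus B_{r^\alpha}$, and similarly all the error terms below will be supported there, giving item (i) with essentially no work. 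Item (ii) and (iii) are then pure size estimates for quantities supported in that annulus, using the pointwise bounds \eqref{principal bb: estimate on Vbar} and \eqref{principal bb: estimate Pi nn}.

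For \eqref{principal bb: eqn W_r}: start from \eqref{principal bb: scaled lamb chaplygin eqn}, $-\tfrac1r\partial_1\ol V_r + (\ol V_r\cdot\nabla)\ol V_r + \nabla\ol P_r = 0$. Substitute $\ol V_r = W_r + \nabla\Pi_r$ and expand. The transport term becomes $(W_r\cdot\nabla)W_r$ plus cross terms of the form $(W_r\cdot\nabla)\nabla\Pi_r + (\nabla\Pi_r\cdot\nabla)W_r + (\nabla\Pi_r\cdot\nabla)\nabla\Pi_r$; the linear term $-\tfrac1r\partial_1\ol V_r$ becomes $-\tfrac1r\partial_1 W_r - \tfrac1r\partial_1\nabla\Pi_r$. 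Now I would convert $\div(W_r\otimes W_r)$ into $(W_r\cdot\nabla)W_r + (\div W_r)W_r$, noting $\div W_r$ is the annulus-supported term from \eqref{divWr}. Every leftover term is either a gradient — e.g. $-\tfrac1r\partial_1\nabla\Pi_r = \nabla(-\tfrac1r\partial_1\Pi_r)$, and $(\nabla\Pi_r\cdot\nabla)\nabla\Pi_r = \nabla(\tfrac12|\nabla\Pi_r|^2)$, which we fold into $P_1$ together with $\ol P_r$ — or else is supported in the annulus $\{r^\alpha\le|x|\le 2r^\alpha\}$, where I would write it as $\div(F_1)$: terms like $(\div W_r)W_r$, the cross terms involving one derivative of $\chi_\alpha$, and mixed transport terms. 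Concretely, any annulus-supported vector field $g$ can be written as $\div$ of a tensor (solve $\div F_1 = g$ with $F_1$ supported where $g$ is, e.g. via the explicit radial/Bogovskii-type solution), and we symmetrize $F_1$ using that symmetrizing a matrix field changes its divergence only by a gradient, which goes into $P_1$. The size bound $\norm{F_1}_{L^p}\lesssim r^{2-2\alpha}r^{\alpha(2/p-2)}$ matches the worst term $(\div W_r)W_r$: on the annulus $|\div W_r|\lesssim r^{1-3\alpha}$ (from (iii) of Lemma~\ref{lemma: properties Pi Wr}), $|W_r|\lesssim r/|x|^2\lesssim r^{1-2\alpha}$, product $\lesssim r^{2-5\alpha}$, and a primitive on an annulus of radius $r^\alpha$ gains a factor $r^\alpha$ giving $r^{2-4\alpha}$ pointwise, times $|{\rm annulus}|^{1/p}\sim r^{2\alpha/p}$ — which should be reconciled with the stated exponent; I would recheck the bookkeeping here since this is exactly the place the exponents are delicate.

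For \eqref{principal bb: eqn partial r W_r}: differentiate the defining relation $W_r(x) = \ol V_r(x) - r\nabla(\chi_\alpha\wt\Phi)$ in $r$. Using the scaling $\ol V_r(x) = \tfrac1r\ol V(x/r)$ one computes $\partial_r\ol V_r = -\tfrac1{r^2}\ol V(x/r) - \tfrac1{r^2}(x/r)\cdot\nabla\ol V(x/r) = -\tfrac1r\ol V_r - \tfrac1r(x\cdot\nabla)\ol V_r$, and then uses the Euler-type homogeneity identity in the form \eqref{principal bb: div tensor V x}, namely $\div(\ol V_r\otimes x)$ controls the combination $(x\cdot\nabla)\ol V_r + (\div(\text{stuff}))$; more precisely, outside $B_r$ where $\ol V_r = r\nabla\wt\Phi$ with $\wt\Phi$ being $-1$-homogeneous, one gets $(x\cdot\nabla)\ol V_r = -\ol V_r$ exactly, so $\partial_r\ol V_r = 0$ there, consistent with the outside formula $\ol V_r = r\nabla\wt\Phi$ differentiating to $\nabla\wt\Phi = \tfrac1r\ol V_r$ — so the identity $\partial_r\ol V_r = \tfrac1r\ol V_r + (\text{annulus/core correction})$ holds with the correction being a gradient plus a divergence supported in $B_r$. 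Combining with the $r$-derivative of $-r\nabla(\chi_\alpha\wt\Phi) = -\nabla(\chi_\alpha\wt\Phi) - r\nabla(\partial_r\chi_\alpha\,\wt\Phi)$ (and $\partial_r\chi_\alpha$ supported in the annulus, of size $r^{-\alpha}\cdot r^{-1}$ times the annulus indicator — i.e. $|\partial_r\chi_\alpha|\lesssim r^{-1}$), all non-gradient pieces are supported in $B_{2r^\alpha}$ and are written as $\div(F_2)$; the gradient pieces form $\nabla P_2$. The bounds in (ii)--(iii) for $F_2, P_2$ then follow from $\norm{W_r}_{L^p}\lesssim r^{2/p-1}$, $\norm{\nabla^n\Pi_r}$ from \eqref{principal bb: estimate Pi nn}, and the $L^\infty$ bounds on the core $\{|x|\le r\}$ where $|\nabla^n W_r|\lesssim r^{-2-n+1}$ roughly, matching $r^{-2}$ after the $\tfrac1r\partial_1$-type scaling.

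The main obstacle will be the careful bookkeeping of exponents of $r$ and $r^\alpha$ across the many cross terms — in particular making sure every non-gradient leftover is genuinely supported in $B_{2r^\alpha}$ (so that item (i) holds) and that the worst of them obeys exactly the stated $L^p$ bound in (ii), rather than a slightly worse one. A secondary technical point is constructing $F_1, F_2$ as $C^{1,1}$ \emph{symmetric} tensors solving $\div F_i = g_i$ with support control: symmetrization must be done so that the antisymmetric remainder is exactly a gradient, and the regularity $C^{1,1}$ must be propagated from that of $\ol V$ (which is $C^{1,1}$, hence $W_r\in C^{1,1}$) through the divergence-inversion, which is why one wants an explicit (not abstract functional-analytic) solution operator on the annulus/ball.
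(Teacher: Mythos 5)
Your overall strategy is the same as the paper's: expand around the exact solution $\ol V_r = W_r + \nabla\Pi_r$, fold the gradient-type leftovers into $P_1,P_2$, and invert the divergence on the compactly supported remainders with a Bogovskii-type operator to produce $F_1,F_2$ with a gain proportional to the radius of the support. The genuine gap is that you treat the compact-support divergence inversion as automatic: ``any annulus-supported vector field $g$ can be written as $\div$ of a tensor supported where $g$ is.'' This is false unless $\int g = 0$ (a compactly supported tensor field has divergence with zero mean), and verifying this mean-zero condition is a real step of the proof, not bookkeeping. For the $F_1$-term it is the quantity $\Delta\Pi_r\nabla\Pi_r$, whose vanishing integral the paper checks by an explicit boundary computation in \eqref{eq:decay1}. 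For the $F_2$-term it is the $B_r$-supported field $\div(\ol V_r\otimes x/r)$, and here the divergence theorem does not help, because $\ol V_r\otimes x$ decays only like $|x|^{-1}$, so the flux through a large circle is $O(1)$; the paper instead deduces zero mean from the fact that $r^{-1}\int_{\R^2}W_r$ is independent of $r$ (Lemma \ref{lemma: properties Pi Wr}(iv) combined with \eqref{principal bb: partial r Wr inter 1}), a nontrivial input your proposal never invokes. Without this you cannot produce a compactly supported $F_2$ at all, let alone with the bound $\norm{F_2}_{L^p}\le C r^{2/p-1}$, which additionally requires the inversion to be performed on $B_r$ (gain $r$) rather than on $B_{2r^\alpha}$ (gain only $r^\alpha$).

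Two smaller points. First, your homogeneity bookkeeping for \eqref{principal bb: eqn partial r W_r} is internally inconsistent: since $\nabla\wt\Phi$ is $(-2)$-homogeneous, $(x\cdot\nabla)\ol V_r = -2\ol V_r$ outside $B_r$, giving $\partial_r\ol V_r = \tfrac1r\ol V_r$ there (not $0$); the correct global identity is $\partial_r\ol V_r = \tfrac1r\ol V_r - \div\left(\ol V_r\otimes\tfrac xr\right)$ with the divergence term supported in $B_r$ thanks to \eqref{principal bb: div tensor V x}. Your final structural claim matches this, but the derivation as written does not. Second, converting to transport form creates extra terms such as $(\div W_r)W_r$, each of which again needs a mean-zero verification before a compactly supported antidivergence can be taken; the paper avoids this by staying in divergence form, so that only $\Delta\Pi_r\nabla\Pi_r$ passes through the Bogovskii operator, while $W_r\otimes\nabla\Pi_r + \nabla\Pi_r\otimes W_r$ is already a compactly supported tensor. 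Also note that the proposition does not require $F_1,F_2$ to be symmetric (symmetrization is performed later via $\mathcal R_0$), so that part of your concern is unnecessary.
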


\begin{remark}
From (\ref{principal bb: eqn W_r}), it is clear that $W_r(x - t r^{-1} e_1)$ is an approximate solution to the momentum part of the Euler equation.
\end{remark}

\begin{remark}[Space-Time Smoothing of $W_r$]
\label{rmk:smoothing}
    The velocity field $W_r$, pressure fields $P_1, P_2$, and tensors $F_1$, $F_2$ are almost $C^2$ but not smooth. However, we can smooth them out according to
    \begin{equation}
        W_r \ast \rho_\ell \, ,
        \quad 
        P_1\ast \rho_\ell \, ,
        \quad
        P_2 \ast \rho_\ell \, ,
        \quad
        F_2 \ast \rho_\ell \, ,
    \end{equation}
    where $\rho_\ell$ is a smooth convolution kernel supported at scale $\ell$. We then replace $F_1$ with
    \begin{equation}
        F_1 \ast \rho_\ell + (W_r \otimes W_r)\ast \rho_\ell - (W_r\ast \rho_\ell) \otimes (W_r\ast \rho_\ell) \, ,
    \end{equation}
    which is smooth as well.
    If $\ell$ is chosen small enough, then the regularized velocity field, pressure fields, and error tensors will satisfy all the properties in Proposition \ref{prop:bb constant spead}.
\end{remark}

\begin{proof}[Proof of Proposition \ref{prop:bb constant spead}]
We begin by deriving equation (\ref{principal bb: eqn W_r}) and proving the relevant properties of $P_1$ and $F_1$. 
Using (\ref{principal bb: scaled lamb chaplygin eqn}), (\ref{principal bb: req pressure}) and (\ref{principal bb: decomposition}), we discover that $W_r$ satisfies
\begin{equation}
\begin{split}
- \frac{1}{r} \partial_1 W_r - \frac{1}{r} \partial_1 \nabla \Pi_r + \div(W_r \otimes W_r) + & \div(W_r \otimes \nabla \Pi_r + \nabla \Pi_r \otimes W_r) 
\\&+ \div(\nabla \Pi_r \otimes \nabla \Pi_r) + \nabla \ol{P}_r = 0 \, .
\end{split}
\label{principal bb: initial eqn Wr}
\end{equation}
Next, we have the following identity:
\begin{align}
\div (\nabla \Pi_r \otimes \nabla \Pi_r) = \Delta \Pi_r \, \nabla \Pi_r +  \nabla \frac{|\nabla \Pi_r|^2}{2} \, .
\label{principal bb: a simple identity}
\end{align}
Using the definition of the pressure $\Pi_r$ from (\ref{principal bb: req pressure}), we get
\begin{align}
\Delta \Pi_r \, \nabla \Pi_r = r^2\left(\Delta \rchi_\alpha \, \wt{\Phi} + 2 \nabla \rchi_\alpha \cdot \nabla \wt{\Phi} \right) \left(\nabla \rchi_\alpha \wt{\Phi} + \rchi_\alpha \nabla \wt{\Phi}\right).
\end{align}
From the first term in the parentheses on the right-hand side and the definition of the cutoff function $\rchi_\alpha$, we see that $\supp \Delta \Pi_r \, \nabla \Pi_r \subseteq B_{2 r^\alpha}(0) \setminus B_{r^\alpha}(0)$. 
From this, we note that the integral of $\Delta \Pi_r \, \nabla \Pi_r$ is zero (on $\mathbb{R}^2$ or equivalently on $B_{2r^\alpha}$) by integration by parts:
\begin{align}
\label{eq:decay1}
\int_
{\mathbb{R}^2} \Delta  \Pi_r \, \nabla \Pi_r \, d x
 &=
   \int_{B_{2r^\alpha}} \Big(\div (\nabla \Pi_r \otimes \nabla \Pi_r) 
 - 
  \nabla \frac{|\nabla \Pi_r|^2}{2}\Big) \, d x \nonumber 
  \\& = \int_{\partial B_{2r^\alpha}} \Big(\nabla \Pi_r  (\nabla \Pi_r \cdot n ) 
- 
 \frac{|\nabla \Pi_r|^2}{2} \, n \Big)\, d \mathcal H^1 = 0.
\end{align}
The last equality follows by direct computation in polar coordinates since $\Pi_r = r\wt{\Phi}$ is explicit when $|x| \geq 2 r^\alpha$, hence the integrand is $r^{-4}[ (\cos^2 \theta -1/2) \hat \rho- \sin \theta \cos \theta \hat \theta]$. 
 Using this fact along with equation (\ref{principal bb: initial eqn Wr}) and identity (\ref{principal bb: a simple identity}), we see that $W_r$ satisfies equation (\ref{principal bb: eqn W_r}) if we define
\begin{align}
P_1 &= \ol{P}_r - \frac{\partial_1 \Pi_r}{r}  + \frac{|\nabla \Pi_r|^2}{2},  \\
F_1 &= W_r \otimes \nabla \Pi_r + \nabla \Pi_r \otimes W_r + \mathcal{B}(\Delta \Pi_r \nabla \Pi_r) \, ,
\end{align}
where $\mathcal{B}$ is the Bogovskii operator defined in Appendix \ref{appendix:antidiv}.
When $|x| \geq 2 r^{\alpha}$, we see $\Pi_r =r\wt{\Phi}$ and from (\ref{principal bb: pressure outside rad r})
, we see that $P_1 = 0$, when $|x| \geq 2 r^{\alpha}$. Hence, $\supp P_1 \subseteq B_{2 r^\alpha}(0)$. From Proposition \ref{prop:Bogovskii2}, we also see that $\supp \mathcal{B}(\Delta \Pi_r \nabla \Pi_r) \subseteq B_{9 r^\alpha}(0)$, which after combining with (i) in Lemma \ref{lemma: properties Pi Wr} implies $\supp F_1 \subseteq B_{9 r^\alpha}(0)$.

\bigskip

We now estimate the $L^p$ norm of $F_1$. From (i) in Lemma \ref{lemma: properties Pi Wr}, (\ref{principal bb: estimate Pi nn}) and (\ref{principal bb: est 1 Wr})
, we see that $|W_r| \, |\nabla \Pi_r|$ is supported in $\{r^\alpha \leq |\cdot| \leq 2 r^\alpha\}$ and bounded by ${r^{2-4 \alpha}}$. Analogously, $ \Delta \Pi_r \nabla \Pi_r$ is supported   in $\{r^\alpha \leq |\cdot| \leq 2 r^\alpha\}$ and bounded by $
C {r^{2-5 \alpha}}$.
Hence, for every $p\in (1,\infty)$ we obtain that 
\begin{align}
\norm{W_r \otimes \nabla \Pi_r + \nabla \Pi_r \otimes W_r}_{L^p} \leq C(p) r^{2 - 2 \alpha} \, r^{\alpha \left(\frac{2 }{p} - 2 \right)}\, ,
\label{principal bb: F1 Lp 1}
\end{align}
\begin{align}\label{eqn:DeltaPnablaP}
\norm{\Delta \Pi_r \nabla \Pi_r}_{L^p} \leq C(p) r^{2 - 3\alpha} r^{\alpha(\frac{2}{p}-2)}  .
\end{align}
From Appendix \ref{appendix:antidiv}, an application of Proposition \ref{prop:Bogovskii2} and \eqref{eqn:DeltaPnablaP} 
provides us with the following estimate
\begin{align}
\norm{\mathcal{B} (\Delta \Pi_r \nabla \Pi_r)}_{L^p} \leq C(p) r^\alpha \norm{\Delta \Pi_r \nabla \Pi_r}_{L^p} \leq C(p) r^{2 - 2\alpha} r^{\alpha(\frac{2}{p}-2)} \, , \quad \text{for every $p\in (1,\infty)$}\, .
\label{principal bb: F1 Lp 2}
\end{align}
Combining (\ref{principal bb: F1 Lp 1}) and (\ref{principal bb: F1 Lp 2}) gives us the required $L^p$ estimate on $F_1$.


\bigskip

Now we focus on equation (\ref{principal bb: eqn partial r W_r}). We first observe that
\begin{align}
\partial_r \ol{V}_r 
= 
- \frac{1}{r} \ol{V}_r - \left(\frac{x}{r} \cdot \nabla \right) \ol{V}_r 
= 
\frac{1}{r} \ol{V}_r - \div\left(\ol{V}_r \otimes \frac{x}{r}\right)\, .
\label{principal bb: W_r prilim eqn}
\end{align}
From which we see that
\begin{align}
\partial_r W_r = \frac{1}{r} W_r - r\nabla (\wt{\Phi} \partial_r \rchi_\alpha) - \div\left(\ol{V}_r \otimes \frac{x}{r}\right). 
\label{principal bb: partial r Wr inter 1}
\end{align}
As regards the last term in the right-hand side, we observe that  $|x| \geq r$, we have $\ol{V}_r =r \wt{V}$ by \eqref{principal bb: lc outside rad r},  which combined with \eqref{principal bb: div tensor V x} leads to 
\begin{align}
\supp \div\left(\ol{V}_r \otimes \frac{x}{r}\right) \, \subseteq \, B_r(0). 
\label{principal bb: div Vr x inf 2}
\end{align}
We next show that this term has integral $0$, which is not an immediate from  the divergence theorem since $\ol{V}_r $ does not decay sufficiently fast. From \eqref{principal bb: partial r Wr inter 1}, since $\nabla (\wt{\Phi} \partial_r \rchi_\alpha)$ is compactly supported and integrates $0$,  and by \eqref{principal bb: main Wr integral}
\begin{align}
\int_{\mathbb{R}^2} \div\left(\ol{V}_r \otimes \frac{x}{r}\right) \, {\rm d} x = 
\int_{\mathbb R^2} (\partial_r W_r - \frac{1}{r} W_r )\, {\rm d} x =\int_{\mathbb R^2} r \partial_r \frac{W_r}{r} \, {\rm d} x = r \partial_r \left(\frac{1}{r} \int_{\mathbb R^2} W_r \, {\rm d} x\right)=
0.   
\label{principal bb: div Vr x inf 1}
\end{align}
and explicit computation


From (\ref{principal bb: estimate on Vbar}), for every $p\in [1,\infty]$ we get
\begin{equation}
    \norm{\div\left(\ol{V}_r \otimes \frac{x}{r}\right)}_{L^p} + r \norm{D\div\left(\ol{V}_r \otimes \frac{x}{r}\right)}_{L^p}
    \le C r^{\frac{2}{p} - 2}\, .
    \label{principal bb: div Vr x inf 3}
\end{equation}
We define the tensor $F_2$ as
\begin{align}
F_2 \coloneqq \mathcal{B}\left(\div\left(\ol{V}_r \otimes \frac{x}{r}\right)\right) \, .
\label{principal bb: def of F2}
\end{align}
and we observe that $\norm{\div F_2}_{L^\infty} \leq C {r^{-2}}$ by \eqref{principal bb: div Vr x inf 3}.
Now using Proposition \ref{prop:Bogovskii2}, we obtain the following estimate on the $L^p$ norm of $F_2$:
\begin{equation}
\norm{ F_2 }_{L^p} \le C(p) r \norm{\div\left(\ol{V}_r \otimes \frac{x}{r}\right)}_{L^p}
    \le C(p) r^{\frac{2}{p} - 1} \quad \text{for any } p \in (1, \infty).
\end{equation}
Finally, from (\ref{principal bb: W_r prilim eqn}), we see that $W_r$ satisfies (\ref{principal bb: eqn partial r W_r}) with $F_2$ as defined in (\ref{principal bb: def of F2}) and $P_2 = - r\wt{\Phi} \partial_r \rchi_\alpha$.

Since $\nabla^n \partial_r \rchi_r$ is supported in $\ol{B}_{2 r^\alpha} \setminus B_{r^\alpha}$ and bounded by $C(n, \alpha){r^{-1 - n \alpha}}$ for  $n \geq 0$, and by (\ref{principal bb: estimate on Vbar}) we obtain that 
$
\norm{\nabla P_2}_{L^\infty} + r\norm{ \nabla^2 P_2 }_{L^p} \leq C(\alpha){r^{-2 \alpha}}$.
\end{proof}

\subsection{Building Block with Non-Constant Speed on $\mathbb T^2$}
\label{sec:BB nonconstant speed}

Let $r: \mathbb{T}^2 \to (0,\infty)$ be a smooth positive function satisfying $\| r \|_{L_x^\infty} < \frac{1}{9}$. The function $r$ should be thought of as a space-dependent spatial scale. In the following sections, we will adjust $r(x)$ in relation to the Reynolds stress tensor in \eqref{eq:ER}.
We also consider a smooth function $\eta: \mathbb{R} \to [0,\infty)$
. It should be understood as a time-dependent cut-off function.  It will be used to keep our family of building blocks disjoint in time.

Next, we define the trajectory of the center of our building block. We want it to travel on a linear periodic trajectory in the two-dimensional torus. More precisely, given any unit vector $\xi\in \mathbb{R}^2$ with rationally dependent components, the center of mass
$x: \R \to \mathbb{T}^2$ will solve the ODE
\begin{align}\label{eq:ODE}
\begin{cases}
    x'(t) = \frac{\eta(t)}{r(x(t))} \xi
    \\
    x(t_0) = x_0 \, .
\end{cases}
\end{align}
In the sequel, the time-period of the linear trajectory $t\to t \xi$ will play the role of the frequency parameter $\lambda_{q+1}$ in classical convex integration schemes. To obtain this, the trajectory realizes a $1/\lambda_{q+1}$-dense set on the torus.

The velocity field in \eqref{eq:ODE} depends on the inverse of the space-dependent scale $r(x)$ so that the building block will spend more time in locations where the scale is large and leave quickly from locations where the scale is small. This will be key in our new mechanism of error cancellation. The function $\eta(t)$ in \eqref{eq:ODE} mainly serves as a time cut-off.

The principal building block is given by
\begin{align}\label{eq:V^p after W}
V^p(x,t) = \eta(t) \, W_{r(x(t))}(x - x(t)).
\end{align}
where $W_r\in C^\infty$ has been built in Proposition~\ref{prop:bb constant spead}  with  {$\alpha=1/5$} and has been smoothed according to Remark \ref{rmk:smoothing}, and rotated so that it solves
\begin{align}\label{eq:Wxi}
 - \frac{1}{r} (\xi \cdot \nabla) W_r + \div (W_r \otimes W_r) + \nabla P_1 = \div(F_1) \, ,  
 \\
 \label{eq:Wxi-2}
 \partial_r W_r = \frac{1}{r} W_r + \nabla P_2 + \div(F_2) \, , 
 \quad \text{on $\R^2$} \, .
\end{align}
As they are compactly supported, we consider the one-periodized version of $W_r$, $P_1$, $P_2$, and $F_1$, $F_2$, and associate them with functions on the $2$-dimensional torus $\mathbb{T}^2$. Finally, we correct the divergence of $V^p$ by adding a corrector $V^c$, obtained from $V^p$ through the anti-divergence operator $\nabla \Delta^{-1}$ on the torus, and therefore not supported in a small ball as $V^p$
\begin{align}
V(x,t) &:= V^p(x,t) + V^c(x,t)
\label{principal bb: V}
\\
V^c(x,t) &:= - \nabla \Delta^{-1} \div V^p(x,t)
\label{principal bb: Vc}
\end{align}

\begin{proposition}[Building Block]\label{prop:build-block1}
Let  $V$ be as in \eqref{principal bb: V}.
There exist $S \in C^\infty (\mathbb{T}^2 \times \R; \mathbb{R}^2)$, $P\in C^\infty ( \mathbb{T}^2 \times \R; \mathbb{R})$, and a symmetric tensor $F\in C^\infty(\mathbb{T}^2 \times \R; \mathbb{R}^{2\times 2})$ such that
\begin{equation}\label{eq:EulerV}
\begin{cases}
\partial_t V + \div(V\otimes V) + \nabla P 
=  S \frac{d}{dt} \left(\eta(t) \, r(x(t))\right)   +  \div(F) \, ,
\\
\div(V)  = 0\, ,
\end{cases}
\quad\quad \text{on $\mathbb{T}^2\times \R_+$}
\end{equation}
Moreover, the following estimates hold:
\begin{enumerate}[label = (\roman*)]
\item $\| F \|_{L^\infty_t L^{1}_x} \le C \| \eta \|_{L^\infty_t}^2 \| r \|_{L^\infty_x}^{\frac{1}{2}}(1 + \| \nabla \log(r)\|_{L^\infty_x})$.


\item For every $p\in (1,\infty)$, it holds
\begin{equation}
    \begin{split}
        &\| V \|_{L^\infty_t L^p_x} \leq C(p) \| \eta \|_{L^\infty_t } \| r^{\frac{2}{p} - 1} \|_{L^\infty_x} \, ,
        \qquad  \| D V \|_{L^\infty_t L^p_x}
        \le C(p) \| \eta \|_{L^\infty_t } \| r^{\frac{2}{p} - 2} \|_{L^\infty_x} \, ,
        \\[5pt]
        &\| \partial_t V \|_{L^\infty_t L^p_x} +
        \| r^{-1}\|_{L^\infty_x}^{-1} \| \partial_tD V \|_{L^\infty_t L^p_x}  \le C(p) \left(\norm{\eta^2}_{L^\infty_t} \norm{r^{-3}}_{L^\infty_x} \left( 1 + \norm{\nabla r}_{L^\infty_x} \right) +  \norm{\eta'}_{L^\infty_t} \norm{r^{-1}}_{L^\infty_x}\right) \, ,
        \\[2pt] 
        & \| S(\cdot, t) \|_{L_x^p}
    \le C(p)  \, r(x(t))^{\frac{2}{p}-2}\, .
    \end{split}
\end{equation}




\item For every $t\in \R_+$,  $S(\cdot, t)$ is supported in a ball  centered in $r(x(t))$ of radius $2 (r(x(t)))^{\frac{1}{5}}$, and 
\begin{align}
  \int_{\mathbb{T}^2} S(x,t)\, {\rm d} x =  2\pi \xi .
\end{align}

\end{enumerate}
\end{proposition}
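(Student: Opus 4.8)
The plan is to assemble the building block equation directly from the two identities \eqref{eq:Wxi}--\eqref{eq:Wxi-2} established in Proposition \ref{prop:bb constant spead}, tracking carefully the extra terms generated by the non-constant speed and the spatially-varying scale $r(x(t))$. First I would compute $\partial_t V^p$ by the chain rule applied to $V^p(x,t) = \eta(t) W_{r(x(t))}(x - x(t))$. There are three contributions: the $\eta'(t)$ term; the term where $\partial_t$ hits the scale, namely $\eta(t)\,\partial_r W_{r(x(t))}(x-x(t))\cdot \frac{d}{dt}r(x(t))$; and the transport term $-\eta(t)(x'(t)\cdot\nabla)W_{r(x(t))}(x-x(t))$. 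For the last one, the ODE \eqref{eq:ODE} gives $x'(t) = \frac{\eta(t)}{r(x(t))}\xi$, so this becomes $-\frac{\eta(t)^2}{r(x(t))}(\xi\cdot\nabla)W_{r}(x-x(t))$, which is exactly $\eta(t)$ times the linear term in the rescaled, rotated Euler identity \eqref{eq:Wxi}. Combining this with $\div(V^p\otimes V^p)$ and using \eqref{eq:Wxi} produces the Euler flux structure plus $\eta(t)^2$ times $\div(F_1)$ and a pressure gradient, up to the usual commutator error $\div\big((V\otimes V)-(V^p\otimes V^p)\big)$ involving the corrector $V^c$.

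Next I would handle the $\partial_r$ term using \eqref{eq:Wxi-2}: substituting $\partial_r W_r = \frac1r W_r + \nabla P_2 + \div(F_2)$ and noting $\frac{d}{dt}r(x(t)) = \nabla r(x(t))\cdot x'(t) = \frac{\eta(t)}{r(x(t))}\nabla r(x(t))\cdot\xi$. The piece $\eta(t)\frac{1}{r(x(t))}W_r\cdot\frac{d}{dt}r(x(t))$ must be recombined with the $\eta'$ term to build the prescribed source $S\frac{d}{dt}(\eta(t)r(x(t)))$. Indeed $\frac{d}{dt}(\eta r(x(t))) = \eta' r(x(t)) + \eta\frac{d}{dt}r(x(t))$, so I would make the ansatz $S(x,t) = \frac{1}{r(x(t))}W_{r(x(t))}(x-x(t)) + (\text{corrector terms})$; the leading part contributes $\frac{1}{r}W_r\cdot(\eta' r + \eta\frac{d}{dt}r) = \eta'\,W_r + \eta\frac{\frac{d}{dt}r}{r}W_r$, matching both required terms. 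The remaining pieces from $\eta\,\partial_r W_r\,\frac{d}{dt}r$, namely $\eta\frac{d}{dt}r\,(\nabla P_2 + \div F_2)$, together with $\eta(t)^2\div(F_1)$ and the $V^c$-commutator and the divergence of $V^c\otimes V^c$, get absorbed into $\nabla P$ and $\div(F)$; applying $\nabla\Delta^{-1}$ where necessary turns lower-order non-divergence-form remainders into divergence form, and symmetrizing gives the symmetric $F$. The normalization $\int_{\mathbb{T}^2}S\,dx = 2\pi\xi$ follows from item (iv) of Lemma \ref{lemma: properties Pi Wr}, $\frac1r\int W_r = (2\pi,0)^T$, rotated into the $\xi$ direction, provided the corrector terms in $S$ are mean-zero (which I would arrange by taking them in divergence form or as gradients with compact support). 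The support statement in (iii) is immediate from $\supp W_{r(x(t))}(\cdot - x(t))\subseteq \overline{B}_{2r(x(t))^{1/5}}(x(t))$ since $\alpha = 1/5$.

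For the estimates, items (ii) are obtained by inserting the pointwise/$L^p$ bounds from Lemma \ref{lemma: properties Pi Wr} and Proposition \ref{prop:bb constant spead} into the explicit formulas, using $\|V^c\|_{L^p}\lesssim \|\div V^p\|_{W^{-1,p}}$ and the Calderón--Zygmund bound for $\nabla\Delta^{-1}\div$ on $\mathbb{T}^2$ to control the corrector by the same scale as $V^p$ (this is where $\alpha<1$ is used so that $\|\div W_r\|_{L^p}$ is genuinely subleading), and the chain rule for $\partial_t$ which brings down a factor $|x'(t)| = \frac{\eta}{r}\lesssim \|\eta\|_\infty\|r^{-1}\|_\infty$ each time a spatial derivative is traded for a time derivative, plus an $\|\eta'\|_\infty$ term and an $\|\nabla r\|_\infty$ term from differentiating the scale. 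The $L^\infty_t L^1_x$ bound on $F$ in (i) is the delicate one: the naive $L^1$ bound on $F_1$ via Lemma \ref{lemma: properties Pi Wr}(ii)/Proposition \ref{prop:bb constant spead}(ii) at $p=1$ diverges logarithmically, so one must exploit that $F_1$ is supported in the thin annulus $\{r^\alpha\le|x|\le 2r^\alpha\}$ where it is bounded by $r^{2-4\alpha}$ (resp. the Bogovskii term by $r^{2-3\alpha}$ after the gain of $r^\alpha$), giving $\|F_1\|_{L^1}\lesssim r^{2-4\alpha}\cdot r^{2\alpha} = r^{2-2\alpha}$, which at $\alpha=1/5$ is $r^{8/5}\le \|r\|_\infty^{1/2}$ for $r$ small; the $\nabla\log r$ factor and the square of $\|\eta\|_\infty$ come from the extra $\frac{d}{dt}r$-generated and $\eta^2$-generated contributions to $F$. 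I expect the bookkeeping in this last estimate — cleanly separating which remainder terms land in $\nabla P$ versus $\div F$, and verifying the $L^1_x$ bound survives after applying $\mathcal{B}$ or $\nabla\Delta^{-1}$ to the various error pieces — to be the main obstacle, since $L^1$ is not a Calderón--Zygmund space and one cannot afford to lose the support localization at any step.
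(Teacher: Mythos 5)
Your proposal follows essentially the same route as the paper's proof: the same chain-rule expansion of $\partial_t V^p$, the same use of \eqref{eq:Wxi}--\eqref{eq:Wxi-2} to convert the transport and $\partial_r$ terms, the same identification $S=\tfrac1r W_r$ (in the paper no corrector in $S$ is needed at all, since the leftover pieces go entirely into $P$ and $F$), the mean and support statements from Lemma \ref{lemma: properties Pi Wr}, and the same Calder\'on--Zygmund treatment of $V^c$. The one point you leave open --- whether the $L^1_x$ bound on $F$ survives the nonlocal operators ($\mathcal{R}_0$ for symmetrization, $\nabla\Delta^{-1}$ for $V^c$) --- is resolved in the paper not by preserving support localization but simply by estimating all error pieces in $L^p_x$ with $p=\tfrac{12}{11}$, where the Calder\'on--Zygmund bounds \eqref{eq:R0est2} apply, and then using $\|\cdot\|_{L^1}\le\|\cdot\|_{L^{12/11}}$ on $\mathbb{T}^2$: with $\alpha=\tfrac15$ the borderline contribution is the commutator $V^p\otimes V^c+V^c\otimes V^p+V^c\otimes V^c$, which yields exactly the exponent $\|r\|_{L^\infty_x}^{1/2}$ of statement (i), while $F_3=\mathcal{R}_0\div(\eta^2F_1+\eta r'F_2)$ gives the better power $r^{5/6}$ together with the $\|\nabla\log r\|_{L^\infty_x}$ factor.
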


\begin{proof}
 In the proof, we use the shorthand notations $ r(t) \coloneqq r(x(t))$. 
From \eqref{principal bb: V}, we have
\begin{align}
\partial_t V + \div(V \otimes V) &= \partial_t V^p + \partial_t V^c + \div (V^p \otimes V^p) 
\\&+ \div (V^p \otimes V^c + V^c \otimes V^p) + \div (V^c \otimes V^c) \, ,
\end{align}
then using \eqref{eq:Wxi} and \eqref{eq:Wxi-2}, we obtain
\begin{align}
\partial_t V^p(x,t) & = \partial_t (\eta(t) W_{r(x(t))}(x - x(t))) 
\\& 
= - \frac{\eta^2}{r} (\xi \cdot \nabla) W_{r} + \eta {r}^\prime \partial_r W_r  + \eta^\prime W_r \nonumber 
\\&
= {\eta^2} \left(- \div (W_r \otimes W_r) - \nabla P_1 +\div(F_1) \right)+ \eta {r}^\prime \left(\frac{1}{{r}} W_r + \nabla P_2 + \div (F_2) \right)  + \eta^\prime W_r \nonumber 
\\&
= -  \div (V^p \otimes V^p)  + (\eta {r})^\prime \frac{1}{{r}}W_r + \nabla (\eta {r}^\prime P_2- \eta^2 P_1) + \div (\eta {r}^\prime F_2+ \eta^2 F_1) \, .
\end{align}
Since 
$\partial_t V^c  = - \nabla \partial_t \Delta^{-1} \div V^p$,
we conclude that
\begin{align}\label{eq:EulerV-copy}
\partial_t V + \div(V \otimes V) +\nabla P 
=
S \frac{d}{dt} \left(\eta(t) {r}(t) \right) + \div (F) \, ,
\end{align}
where
\begin{align}
P &= \partial_t \Delta^{-1} \div V^p + \eta^2 P_1 - \eta {r}^\prime P_2 
\\
F & = F_3 + V^p \otimes V^c + V^c \otimes V^p + V^c \otimes V^c 
\\
S & = \frac{1}{{r}} W_r \, ;
\end{align}
the natural choice for $F_3$ to satisfy \eqref{eq:EulerV-copy} is $\eta^2 F_1 + \eta {r}' F_2$, but since this is not a symmetric tensor, we replace it with a symmetric tensor, without changing its divergence, thanks to the operator $\mathcal{R}_0 $ recalled in \eqref{eq:R0}
\begin{equation}
    F_3 \coloneqq \mathcal{R}_0 \div(\eta^2 F_1 + \eta {r}' F_2) \, .
\end{equation}
With this definition of $S$, statement (iii) and the last inequality of statement (ii) follow from Lemma~\ref{lemma: properties Pi Wr}(i), (ii) and (iv). 

As a consequence of \eqref{eq:R0est2},  item (ii) in Proposition \ref{prop:bb constant spead} and noting
\begin{align}
{r}^\prime = \nabla r \cdot x^\prime(t) = \eta \frac{(\xi \cdot \nabla r)}{{r}} = \eta (\xi \cdot \nabla \log r),
\end{align} for every $p\in (1,+\infty)$ we have
\begin{align}
\norm{F_3}_{L^\infty_t L^1_x} \leq \norm{F_3}_{L^\infty_t L^p_x}    &\le C(p) \|\eta^2 F_1 + \eta {r}^\prime F_2\|_{L^p_x}
\\&
    \le C(p) \left( \norm{\eta}_{L^\infty_t}^2 \norm{F_1}_{L^p_x} +  \, \norm{\eta}_{L^\infty_t} \; |{r}^\prime| \;  \norm{F_2}_{L^p_x} \right)
    \\&
     \leq C(p) \norm{\eta}_{L^\infty_t}^2 \left(\norm{r^{2 + \frac{2 \alpha}{p}  - 4\alpha}}_{L^\infty_x} + \norm{\nabla \log r}_{L^\infty_x} \norm{r^{\frac{2}{p}  - 1}}_{L^\infty_x} \right).
\label{principal bb: int F3}
\end{align}
With the choices of 
$p = \frac{12}{11},\, \alpha = \frac{1}{5}$
in (\ref{principal bb: int F3}), we see that
\begin{align}
\norm{F_3}_{L^\infty_t L^1_x} \leq C \norm{\eta}_{L^\infty_t}^2 \norm{r^{\frac{5}{6}}}_{L^\infty_x} \left(1 + \norm{\nabla \log r}_{L^\infty_x} \right).
\label{principal bb: L1 est F3}
\end{align}

\bigskip

{We observe that by Lemma~\ref{lemma: properties Pi Wr}(ii) and (iii), for every $p\in (1, \infty)$,} 
\begin{align}\label{est-vp}
\norm{V^p (\cdot, t)}_{L^p_x} \leq C(p) \norm{\eta}_{L^\infty_t} \, {r}^{\frac{2}{p} - 1} \, ,
\\
\norm{D V^p (\cdot, t)}_{L^p_x} \leq C(p) \norm{\eta}_{L^\infty_t} \, {r}^{\frac{2}{p} - 2}.
\end{align}
From Poincar\'e inequality $\norm{V^c}_{L^p} \leq C(p) \norm{D V^c}_{L^p}$  and the Calderon--Zygmund theory, we have
\begin{align}
\norm{V^c (\cdot, t)}_{L^p_x} + \norm{D V^c(\cdot, t)}_{L^p_x}
     & \le C(p)\norm{\eta}_{L^\infty_t} \norm{ \div W_{{r}(t)} }_{L^p_x} \leq C(p) \norm{\eta}_{L^\infty_t} \left( {r}(t) \right)^{1 + \frac{2 \alpha}{p} - 3 \alpha}\, ,
\end{align}
for every $p\in (1,\infty)$.
{
 By Calderon-Zygmund theory}, we have
$\norm{V^c (\cdot, t)}_{L^{ p}_x} \leq C(p) \norm{V^p (\cdot, t)}_{L^{p}_x}$, which together with \eqref{est-vp} establishes the first inequality in statement (ii). Moreover 
we obtain
\begin{align}
\norm{V^p \otimes V^c + V^c \otimes V^p + V^c \otimes V^c}_{L^{p}_x} 
&\leq C(p) \norm{V^p}_{L^{2p}_x} \norm{V^c}_{ L^{2p}_x}
\\
&\leq C(p)  \norm{\eta}_{L^\infty_t}^2 \| r^{\frac{1 + \alpha}{p} - 3 \alpha} \|_{L^\infty_x}\, ,
\label{principal bb: Vp Vc symm}
\end{align}
for every $p\in (1,\infty)$. Using the choice as above of $p = \frac{12}{11},\, \alpha = \frac{1}{5}$
  in (\ref{principal bb: Vp Vc symm}), we obtain
\begin{align}
\norm{V^p \otimes V^c + V^c \otimes V^p + V^c \otimes V^c}_{L^\infty_t L^{1}_x} 
&\leq C  \norm{\eta}_{L^\infty_t}^2 \| r^{\frac{1}{2}} \|_{L^\infty_x}\, .
\label{principal bb: L1 est Vp Vc symm}
\end{align}
Finally, combining (\ref{principal bb: L1 est F3}) and (\ref{principal bb: L1 est Vp Vc symm}), we conclude property (i) in Proposition \ref{prop:build-block1}.

\bigskip

Now we focus on estimating $\partial_t V$ and $D V$ and $\partial_tD V$ in $L^p$.
We begin by noticing that
\begin{equation}
    \| D_{x,t} V^c \|_{L^\infty_t L^p_x} \le C(p) \| D_{x,t} V^p\|_{L^\infty_t L^p_x} \, ,
    \quad \text{for every $p\in (1,\infty)$}
\end{equation}
by Calderon-Zygmund theory, since $V^c= - \nabla \Delta^{-1}\div(V^p)$. Hence, it will be enough to estimate $D_{x,t} V^p$. The same consideration works for $\partial_tD V^p$. By  \eqref{est-vp} we get the estimate $DV$ in statement (ii). 
To bound the time derivative, we restart from the identity
\begin{align}\label{eqn:identity}
\partial_t V^p =
- \frac{\eta^2}{{r}} (\xi \cdot \nabla) W_r  + (\eta {r})^\prime \frac{1}{{r}}W_r + \nabla (\eta {r}^\prime P_2) + \div (\eta {r}^\prime F_2) \, ,
\end{align}
which implies
\begin{equation}\label{eqn:dtvp}
    \begin{split}
        \| \partial_t V^p(\cdot, t)\|_{L^\infty_x} 
        & \le \frac{\eta^2}{{r}} \| D W_r \|_{L^\infty_x} 
        +|\eta'| \| W_r \|_{L^\infty}
        \\&
        + \eta |{r}'| \left(\frac{1}{{r}}\| W_r \|_{L^\infty_x} + \|\nabla P_2 \|_{L^\infty_x}
        + \| \div (F_2) \|_{L^\infty_x}\right)
        \\&
        \le C\eta^2 {r}^{-3} + C|\eta'|{r}^{-1} + C\eta |{r}'| {r}^{-2} \, ,
    \end{split}
\end{equation}
where we used Lemma~\ref{lemma: properties Pi Wr} 
 and Proposition \ref{lemma: properties Pi Wr} (iii).

Differentiating \eqref{eqn:identity} with respect to space, we obtain the estimate for $\| \partial_t D V^p(\cdot, t) \|_{L^\infty_x}$. This estimate is analogous to \eqref{eqn:dtvp}, but it involves, in the first and second lines, one additional derivative of $W_r$, $F_2$, and $P_2$. The estimates for these quantities can be found in Lemma~\ref{lemma: properties Pi Wr} and Proposition~\ref{lemma: properties Pi Wr}(iii).

\end{proof}
 
\section{Iteration and Proof of the Main Theorems}
\label{sec: iteration}

%
%
%
%
%
%
%
%
%
%
%
%
%
%
%
%
%
%
%
%
%

\bigskip

In this section, we will begin by presenting the choice of parameters and the Euler-Reynolds system. Subsequently, the objective of this section is to assemble all the main ingredients necessary to complete our convex integration scheme. The primary components comprise definitions of parameters, the mollification step, error decomposition, time series, adaptation of the building block from Proposition \ref{prop:build-block1}, auxiliary building block, and a time corrector.

\subsection{Choices of Parameters}
As described in the overview section, there are four parameters involved in the $q$th step of our convex integration scheme, namely, $\delta_{q}$ (the amplitude or the error size), $\lambda_{q}$ (related to the slope of the trajectory), $r_q$ (the size of the core of the building block), and $\tau_q$ (the size of time intervals). Next, we specify the following dependencies among the parameters. Let $\lambda_0,\sigma,\kappa \in \mathbb N,$ and $ \beta, \mu$ be positive parameters. 
For $q \in \mathbb{Z}_{\geq 0}$, we define
\begin{align}
\lambda_{q+1} = \lambda_q^{\sigma} \, ,
\quad 
\delta_q = \lambda_1^{2\beta}\lambda_q^{-\beta} \, ,
\quad
r_{q+1} = \lambda_{q+1}^{-\mu} \, ,
\quad
\tau_{q+1} = \lambda_{q+1}^{-\kappa}
\label{eq:parameters}
\end{align}
In the sequel, we will choose the parameters to satisfy a few simple inequalities (see Section \ref{sec:parameters}) that will be derived to close the convex integration scheme. For the reader's convenience, we prefer to specify here one admissible choice of such parameters, found a posteriori, which will satisfy all the required inequalities derived during the proof:
{
\begin{equation}\label{eq:parameters2}
\beta = \frac{1}{245}, \quad \mu = \frac{53}{10}, \quad \kappa = 3, 
\quad \sigma = 110\, .
\end{equation}
}

\subsection{The Euler--Reynolds System}
In this section, we set up the iteration of our convex integration scheme. 
At the $q$th step of the iteration, we construct solutions to the Euler--Reynolds system
\begin{equation}\label{eq:ER}
    \begin{cases}
        \partial_t u_q + \div(u_q \otimes u_q) + \nabla p_q = \div(R_q),
        \\
        \div u_q = 0 \, ,
    \end{cases}
    \tag{E--R}
\end{equation}
{on $\mathbb{T}^2 \times 
\mathbb R$,}
satisfying the estimates:
\begin{equation}\label{eq:iterate1}
    \| R_q \|_{L^\infty_t L^1_x} \le \delta_{q+1} \, ,
    \quad
     \| u_q \|_{L^\infty_t L^2_x} \leq 2 \delta_0^{1/2}- \delta_q^{1/2}\ ,
     \quad
    {\| 
     D_{x,t} u_q \|_{L^\infty_t L^4_x} }\le \lambda^n_q \, ,
\end{equation}
where the number $n$ is positive.

\begin{proposition}[Iteration Step]\label{prop:iteration}
Let $\overline p = 1 + \frac{1}{6500}$ and $\sigma,\beta,\mu, \kappa$ as in \eqref{eq:parameters2}, $n=16$, and $\alpha = 10^{-5}$. There exists $M\ge 1$ such that for every $\lambda_0\ge \lambda_0(M)$ the following statement holds.
    Let $(u_q,p_q,R_q)$ be a solution to \eqref{eq:ER} satisfying \eqref{eq:iterate1}. Then, there exists $(u_{q+1}, p_{q+1}, R_{q+1})$ smooth solution to \eqref{eq:ER} such that
    \begin{itemize}
        \item[(i)] $\| R_{q+1} \|_{L^\infty_t L^1_x} \le \delta_{q+2}$,  $\| u_{q+1} \|_{L^\infty_t L^2_x} \leq 2 \delta_0^{1/2}- \delta_{q+1}^{1/2}$,
        $\| D_{x,t} u_{q+1} \|_{L^\infty_t L^4_x} \le \lambda^n_{q+1}$.
        
        \item[(ii)] {$\|u_{q+1} - u_q\|_{L^\infty_tL^2_x}
        \le M \delta_{q+1}^{1/2}$}, 
        $\|D u_{q+1}\|_{C^\alpha_t L^{\overline{p}}_x} \leq \|  D u_q\|_{C^\alpha_t L^{\overline{p}}_x} +    \lambda_0\delta_{q+1}^{1/10}$.

        \item[(iii)] {$\|u_{q+1}(\cdot, 0) - u_q(\cdot,0)\|_{L^2} \le \lambda_q^{-1}$, $\|u_{q+1}(\cdot, 1) - u_q(\cdot ,1)\|_{L^2} \le \lambda_q^{-1}$.
        }

    \end{itemize}
\end{proposition}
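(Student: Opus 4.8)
\textbf{Proof plan for the Iteration Step (Proposition \ref{prop:iteration}).}

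The plan is to carry out one step of the convex integration scheme following the structure announced in the Overview. First I would mollify $(u_q, p_q, R_q)$ at a suitable length/time scale $\ell$ (a small power of $\lambda_{q+1}^{-1}$) to obtain $(u_\ell, p_\ell, R_\ell)$ satisfying the same estimates as \eqref{eq:iterate1} up to constants, together with higher-order bounds $\|D^{N} u_\ell\|_{L^\infty} \lesssim \ell^{-N+\dots}$ and a commutator estimate controlling the mollification error $R_\ell - R_q$ in $L^\infty_t L^1_x$ by $\delta_{q+2}$ (after absorbing constants into the eventual inequalities on the parameters \eqref{eq:parameters2}). Then, using Lemma \ref{lemma:error dec}, I decompose $-\div R_\ell = \div\bigl(\sum_{i=1}^4 a_i(x,t)\,\xi_i \otimes \xi_i\bigr) + \nabla P^d$ into four rank-one pieces with $\|a_i\|$ comparable to $\|R_\ell\|_{L^\infty_t L^1_x} \le \delta_{q+1}$ in the relevant norms, where the $\xi_i$ are fixed unit vectors with rationally dependent components. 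This fixes the building-block directions for the step.

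Next I would build the perturbation. Partition $[0,1]$ (and all of $\R$) into intervals of length $\tau_{q+1}$, each subdivided into four equal subintervals, and on the $i$-th subinterval place the principal building block $V^p_i$ of Proposition \ref{prop:build-block1} travelling in direction $\xi_i$ with scale $r_i(x) = r_{q+1}\,\overline a_i(x)$, where $\overline a_i$ is (a regularization of) the time average of $a_i$ over the relevant window; $\eta(t)$ is the time cutoff switching the block on and off. Then add the corrector $V^c_i$ to restore $\div$-free-ness, the auxiliary building block of Section \ref{sec:auxiliary} (to compensate for the fact that $S(x,t)$ is not exactly of the form $U(x - x(t))$), and the time corrector $Q_{q+1}$ of Section \ref{sec:timecorr} (to kill the mean-free-in-time part of the source/sink term, using $\partial_t Q_{q+1} = -(\text{mean-free part})$). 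Set $v_{q+1}$ to be the sum of all these and $u_{q+1} = u_\ell + v_{q+1}$. The key computation is that the time average of the source term $S\frac{d}{dt}(\eta(t) r(t))$ over each subinterval equals $\div(a_i \xi_i \otimes \xi_i) + \div(G_i)$ with $G_i$ a new small error, as sketched in Section \ref{sec:ta-errcanc}; this is what makes the new Reynolds stress small.

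The new Reynolds stress $R_{q+1}$ is then assembled via the anti-divergence operator $\mathcal R_0$ from the error terms: (a) the mollification/commutator error; (b) the transport/Nash error $\int_0^t \partial_t u_\ell + \dots$, i.e. the interaction of $v_{q+1}$ with $u_\ell$ and with itself at different subintervals; (c) the error $F$ coming from each building block (controlled in $L^\infty_t L^1_x$ by Proposition \ref{prop:build-block1}(i)); (d) the "oscillation error" $G_i$ left after the time-average cancellation, which requires exploiting the time periodicity $\tau_{q+1}$ together with the density $\lambda_{q+1}^{-1}$ of the trajectory on the torus (an integration-by-parts/stationary-phase-type gain of $\lambda_{q+1}^{-1}$); (e) errors from the auxiliary block and the time corrector. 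Each must be bounded by $\delta_{q+2} = \lambda_1^{2\beta}\lambda_{q+1}^{-\beta}$ in $L^\infty_t L^1_x$; this is where one reads off the system of inequalities among $\beta,\mu,\kappa,\sigma,\alpha$ that \eqref{eq:parameters2} is chosen to satisfy. The estimates in (i)--(ii) for $\|u_{q+1}\|_{L^\infty_t L^2_x}$, $\|D_{x,t} u_{q+1}\|_{L^\infty_t L^4_x}$, $\|u_{q+1}-u_q\|_{L^\infty_t L^2_x} \le M\delta_{q+1}^{1/2}$, and the $C^\alpha_t L^{\overline p}_x$ bound on $Du_{q+1}$ follow from Proposition \ref{prop:build-block1}(ii) and the parameter choices, using that the building block has $L^2_x$ norm $\approx \|\eta\|_{L^\infty}\|r^{1/1}\|\sim$ bounded independently of $r_{q+1}$ while its $W^{1,\overline p}$ norm gains a small power of $r_{q+1}$ precisely because $\overline p$ is close to $1$ (this is the whole point of the new scheme). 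For (iii), I would choose the time cutoffs so that the perturbation $v_{q+1}$ vanishes in a neighbourhood of $t=0$ and $t=1$ (no building block is switched on there), so $u_{q+1} = u_\ell$ near the endpoints, and then $\|u_{q+1}(\cdot,0) - u_q(\cdot,0)\|_{L^2} = \|u_\ell(\cdot,0) - u_q(\cdot,0)\|_{L^2} \le \lambda_q^{-1}$ by the mollification estimate (choosing $\ell$ appropriately).

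\textbf{Main obstacle.} The hardest part is estimating the oscillation error (item (d)) and, relatedly, making the auxiliary-building-block correction work: because the perturbation is no longer periodic in space, the usual "high–low" frequency cancellation is unavailable, and the gain must come entirely from the time average over windows of length $\tau_{q+1}$ combined with the $\lambda_{q+1}^{-1}$-density of the trajectory. Controlling the error from the discrepancy between $S(x,t)$ and a time-frozen profile $U(x-x(t))$ — which is what the auxiliary block is for — while keeping all errors in $L^\infty_t L^1_x$ (not $L^2_x$, since we are below the convex-integration threshold) is the delicate technical heart of the argument, and it is what forces the somewhat extreme parameter values in \eqref{eq:parameters2}.
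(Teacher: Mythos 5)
Your plan is correct and follows essentially the same route as the paper's own proof: mollification at scale $\ell$, the rank-one decomposition of Lemma \ref{lemma:error dec} with directions chosen so the trajectory is $\lambda_{q+1}^{-1}$-dense, the $\tau_{q+1}$-partition with four subintervals, the variable-scale principal blocks of Proposition \ref{prop:build-block1} with $r_i^k = r_{q+1} a_i^k$, the auxiliary block and time corrector, the same bookkeeping of $R_{q+1}$ into linear, corrector, time-freezing, source-replacement, $F_i^k$ and $G_i^k$ errors, and item (iii) from the vanishing of the perturbation at times in $\tau_{q+1}\mathbb{Z}$ combined with the mollification estimate. The only content not present is the quantitative verification (choice of $\eta_i^k$, the trajectory-average normalization, and the parameter inequalities of Section \ref{sec:parameters}), which is exactly where the paper's Sections \ref{sec:velocity perturbation}--\ref{sec:est-pert-rey} do the work, so your outline matches the argument rather than offering an alternative.
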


{
The iterative estimate (i) guarantees that the error $R_q$ converges to zero in $L^\infty_tL^1_x$ as $q\to \infty$, while maintaining some control over the space-time derivative $D_{x,t} u_q$ of the velocity field. However, the latter control weakens as $q$ tends to infinity and is ultimately lost. Nonetheless, it serves as a crucial technical component in proving Proposition \ref{prop:iteration}.
The estimates (ii) ensure that in each iteration, the new velocity field $u_{q+1}$ is close to the previous one $u_q$ in the relevant functional space $C_t(L^2_x\cap W^{1,\overline{p}}_x)$.
Finally, (iii) tracks the velocity field at the initial and final times $t=0$ and $t=1$. This is crucial in the proof of Theorem \ref{thm:main} to prescribe the initial and final conditions up to a small error. Its validity is a consequence of the time intermittency in our construction; see Remark \ref{rmk:control ending points}. 

}

{

\begin{remark}[Locality in Time]
\label{rmk:locality in time}
In our proof of Proposition \ref{prop:iteration}, the construction of $(u_{q+1},p_{q+1},R_{q+1})$ from $(u_{q},p_{q},R_{q})$ exhibits a certain locality in time. The precise statement is as follows:
\begin{itemize}
    \item[(iv)] Assume that $(u_{q},p_{q},R_{q})$ and $(u_{q}',p_{q}',R_{q}')$ are solutions to \eqref{eq:ER} satisfying \eqref{eq:iterate1}. If they coincide on $[0,t]$, for some $t\geq 1/9$, then we can construct $(u_{q+1},p_{q+1},R_{q+1})$ and $(u_{q+1}',p_{q+1}',R_{q+1}')$ satisfying (i), (ii), and coinciding on $[0,t -\lambda_q^{-1}]$.
\end{itemize}
    
\end{remark}

}

{

\subsection{Proof of Theorem \ref{thm:main} and Theorem \ref{thm:nonuniqueness}}\label{sec:proofs1}

In this section, we rely on Proposition \ref{prop:iteration} to complete the proof of Theorem \ref{thm:main} and Theorem \ref{thm:nonuniqueness}.

\bigskip

We begin with Theorem \ref{thm:main}.
We fix $\eps > 0$ and proceed to define, for $(x,t) \in \mathbb T^2 \times [0,1]$,
\begin{align}\label{eq:u0p0R0}
    &u_0(x,t) : = \chi(t) (u_{\rm start}\ast \rho_\ell)(x) + (1-\chi(t)) (u_{\rm end}\ast \rho_\ell)(x) \, ,
    \\& 
    p_0(x,t) := 0 \, ,
    \\&  
    R_0(x,t) := \mathcal{R}_0(\partial_t u_0 + \div(u_0 \otimes u_0))(x,t) \, ,
\end{align}
where $\rho_\ell$ is a smooth convolution kernel, $\ell>0 $ is small enough to ensure that
\begin{equation}\label{eq:elleps}
    \| u_{\rm start} - u_{\rm start}\ast \rho_\ell \|_{L^2} 
    +
    \| u_{\rm end} - u_{\rm end}\ast \rho_\ell \|_{L^2}
    \le \frac{\eps}{2} \, ,
\end{equation}
and $\chi(t)$ is a smooth time cut-off such that $\chi(t) = 1$ for $t\le 1/4$, and $\chi(t)=0$ for $t\ge 1/2$.
Notice that $R_0$ is a well-defined symmetric tensor since $u_{\rm star}$ and $u_{\rm end}$ are mean-free velocity fields.

We choose $\lambda_0 \in \N$ big enough so that
\begin{equation}
    \| R_0 \|_{L^\infty_t L^1_x} \le \delta_1  \, ,
    \quad
    \| D_{x,t} u_0 \|_{L^\infty_t L^4_x} \le \lambda_{0}^{16}\, ,
    \quad
    4\lambda_0^{-1} \le \eps \, .
\end{equation}
 and it satisfies the condition specified in Proposition \ref{prop:iteration}.
We can apply Proposition \ref{prop:iteration} to produce a sequence of smooth solutions $(u_q,p_q,R_q)$ to the Euler--Reynolds system \eqref{eq:ER} satisfying properties (i), (ii), and (iii). From (ii), it follows that the sequence $(u_q)_{q\in \N}$ satisfies the gradient bound
\begin{equation}
   \sup_{q \geq n} \| D u_q - D u_n \|_{L^\infty_t L^p_x}
    \le   \sum_{q'=n}^\infty \| D u_{q'+1} - D u_{q'} \|_{L^\infty_t L^p_x}
    \le
    \lambda_0 \sum_{q'=n}^\infty \delta_{q'}^{1/10}
    \, .
\end{equation}
This shows that $(u_q)_{q\in \N}$ is a Cauchy sequence and converges in $ C_t(L^2_x\cap W^{1,\overline p}_x)$ to
\begin{equation}\label{eq:u}
    u(x,t) := u_0(x,t) + \sum_{q=0}^\infty (u_{q+1}(x,t) - u_q(x,t)) \in C_t(L^2_x\cap W^{1,\ol{p}}_x) \, .
\end{equation}
In particular, $\omega := {\rm curl} u \in  C_t L^{\overline p}_x$.

\bigskip

As a consequence of (iii) and \eqref{eq:u}, it follows that
\begin{align}
    \|u(\cdot, 0) - u_{\rm start}\ast \rho_\ell\|_{L^2}
    &=
    \|u(\cdot, 0) - u_0(\cdot,0)\|_{L^2}
    \\&
    \le 
    \sum_{q=0}^\infty \| u_{q+1}(\cdot,0) - u_q(\cdot,0)\|_{L^2}
    \\&\le \sum_{q=0}^\infty \lambda_q^{-1}
    \le 2 \lambda_0^{-1}
    \le \eps/2 \, .
\end{align}
In view of \eqref{eq:elleps}, we conclude $\| u(\cdot,0) - u_{\rm start}\|_{L^2} \le \eps$. Similarly, we also obtain $\| u(\cdot,1) - u_{\rm end}\|_{L^2} \le \eps$.

\bigskip

Finally, it is standard to check that $u$ is a weak solution of the Euler equations \eqref{EU} by taking the limit $q\to \infty$ in the distributional formulation of \eqref{eq:ER}, since $\|u_q-u\|_{L^\infty_t L^2_x} \to 0$ and $\| R_q \|_{L^\infty_t L^1_x} \to 0$, as a consequence of (i) in Proposition \ref{prop:iteration}.

\bigskip

To prove Theorem \ref{thm:nonuniqueness}, it suffices to combine the previous construction together with Remark~\ref{rmk:locality in time}. We fix a divergence-free velocity field $u_{\rm start}\in L^2$ with zero mean. For every $u_{\rm end}\in L^2$, with zero divergence and mean, we build $(u_0,p_0,R_0)$ as in \eqref{eq:u0p0R0}. All of this solutions to \eqref{eq:ER} coincide in $[0,1/4]$ by construction. Hence, by Remark \ref{rmk:locality in time}, at each stage of the iteration the new solutions will coincide in a definite neighborhood of $t=0$. Therefore, in the limit we get infinitely many solutions with the same initial condition $v$ satisfying $\| v - u_{\rm start}\|_{L^2} \le \eps$.

}

\subsection{Solutions with Time-Wise Compact Support}

In this section, we rely on Proposition \ref{prop:iteration} to complete the proof of Theorem \ref{thm:main2}.

\bigskip

Let $\beta, \sigma$ as in \eqref{eq:parameters2}. Let $\lambda_0\in \N$ be big enough. We define
\begin{align}
    &u_0(x,t) : = \lambda^{\frac{3}{4}\beta \sigma}_0 \chi(t) \sin(\lambda_0 x_2) e_1
    \\& 
    p_0(x,t) := 0 \, ,
    \\&  
    R_0(x,t) := -\lambda_0^{-1 + \frac{3}{4}\beta \sigma} \chi'(t) \cos(\lambda_0 x_2) (e_1\otimes e_2 + e_2 \otimes e_1)\, ,
\end{align}
where \(\chi \in C^\infty(\mathbb{R})\) is a cut-off function such that \(\chi = 1\) on \((1/2, 3/4)\) and \(\chi = 0\) on \((-\infty, 1/4) \cup (7/8, +\infty)\). It turns out that \((u_0, p_0, R_0)\) solves the Euler--Reynolds system \eqref{eq:ER} with the following estimates:
\begin{equation}\label{z30}
    \| R_0 \|_{L^\infty_t L^1_x} 
    \le 20 \lambda_0^{-1 + \frac{3}{4}\beta \sigma} \, ,
    \quad
    \| u_0 \|_{L^\infty_t L^2_x} = C \lambda_0^{\frac{3}{4}\beta \sigma} \, ,
    \quad
    \| D_{x,t} u_0 \|_{L^\infty_t L^4_x} \le 20 \lambda_0^{1+\frac{3}{4}\beta \sigma} \, ,
\end{equation}
which allows us to start the iteration, provided \(\lambda_0\) is sufficiently large.

We obtain a sequence $(u_q,p_q,R_q)$ of solutions to \eqref{eq:ER} satisfying the inductive estimates (i), (ii) in Proposition \ref{prop:iteration}. Taking into account Remark \ref{rmk:locality in time}, we can assume that $u_q(x,t) = 0$ when $t\le 1/8$ and $t\ge 1$, for every $x\in \mathbb{T}^2$. Arguing as in Section \ref{sec:proofs1} we deduce that $u_q \to u$ in $C_t(L^2_x \cap W_x^{1,\overline{p}})$ while $R_q \to 0$ in $L^\infty_t L^1_x$. Moreover, $u(x,t)$ solves \eqref{EU} for a suitable pressure and is compactly supported in time. Moreover,
\begin{align}
    \| u - u_0 \|_{L^\infty_t L^2_x} 
    &\le \| u_1 - u_0 \|_{L^\infty_t L^2_x}
    + \sum_{q\ge 1} \| u_{q+1} - u_q\|_{L^\infty_t L^2_x}
    \\&
    \le M\left( \delta_1^{1/2} + \sum_{q\ge 1} \delta_{q+1}^{1/2} \right)
    \\&
    \le M \lambda_1^{\beta/2} + C(M) \lambda_0^{\beta(1-\sigma/2)}\, .
\end{align}
Hence, if \(\lambda_0\ge \lambda_0(M)\) is sufficiently large, then 
\begin{equation}
    \| u - u_0 \|_{L^\infty_t L^2_x} 
    \le 2M \lambda_1^{\beta/2}
    <
    C\lambda_0^{\frac{3}{4}\beta \sigma} =\| u_0 \|_{L^\infty_t L^2_x} \, ,
\end{equation}
thus $u$ is nontrivial.

\section{The Perturbation
}

In this section, we gather all the necessary ingredients to define the new velocity field \( u_{q+1} \) as an additive perturbation of \( u_q \). As explained in the introduction, our perturbation is designed to have, up to lower order corrections, some qualitative features, which we recall here.
At any given time, the principal part of our perturbation consists of only one building block whose vorticity is compactly supported and, at first approximation, translating in a fixed direction; in different time intervals, such direction switches between four fixed directions.  
The speed of translation and the spatial scale of the building block vary and are determined by the previous error. 


\bigskip

We give a more detailed overview of the steps of the construction. Firstly, in Section~\ref{section: mollification} given a solution $u_q$ of the Euler--Reynolds system with error $R_q$, we perform a standard procedure in convex integration to avoid the ``loss of derivative'' problem. We consider a mollified version $u_{\ell}$ of $u_q$, where the convolution parameter is chosen small enough to control the error coming from the convolution of the nonlinearity of the equation. In this way, we have quantitative controls on all the derivatives of the convolved vector field  $u_{\ell}$ and on the associated Reynolds stress $R_\ell$.
 
 Next in Section~\ref{sec:err-dec} we consider a decomposition of the error $R_q$ in rank one directions as
\begin{align}\label{eqn:errr-dec}
    - \div(R_q) = \div\left(\sum_{i=1}^4 a_i(x,t) \xi_i \otimes \xi_i\right) + \nabla P^{\, d},
\end{align}
such the coefficients $a_i$, $i \in \{1, 2, 3, 4\}$, are bounded below by a positive constant and have the same size as $R_q$ in $L^\infty_t L^1_x$. The peculiarity of our family $\xi_i$ is related to the following: the lines $\mathbb R \xi_i$, which corresponds to the trajectory of our building block in direction $\xi_i$, need to reconstruct a periodic set on the torus, whose period is, up to a constant, is  $ \lambda_{q+1}^{-1}$.

In Section~\ref{sec:time}, we split $\mathbb R$ into time intervals of length $\tau_{q+1}$, namely  $\mathcal{T}^k=[k\tau_{q+1}, (k+1)\tau_{q+1}]$.
The intervals $\mathcal{T}^k$ is further divided into four subintervals $\mathcal{T}^k_i$, $i \in \{1, 2, 3, 4\}$ of length $\tau_{q+1}/4$, on each of which the principal part of the perturbation will have direction $\xi_i$ and will run around the associated trajectory in a time, called period, much smaller than $\tau_{q+1}/4$.
Different directions are then patched together with a system of cutoffs $\zeta^k_i$ whose support is in $\mathcal{T}^k_i$.


In Section~\ref{sec:spa-scale} we introduce the varying size $r^k_i(x)$ of our building block, which is proportional to the (time averaged) coefficients $a_i$ in \eqref{eqn:errr-dec}, and in Section~\ref{sub:traj cent} we specify the ODE solved by the center $x^k_i(t)$ of the main part of the perturbation, which is essentially forced by scaling once one fixes the space size and expects the building block to solve Euler up to a small error.

Next, as first mentioned in Section~\ref{sec:introbb}, we define $V_i^k(x, t)$ in Section~\ref{sec:princbb} to be the velocity field from Proposition \ref{prop:build-block1} applied to the spatial scale $r^k_i(x)$ and the time cutoff $\eta^k_i \zeta^k_i(t)$, which solves
\begin{equation}
\begin{cases}
\partial_t V_i^k + \div(V_i^k\otimes V_i^k) + \nabla P_i^k = 
 S_i^k {\frac{d}{dt} \left( \eta_i^k \zeta_i^k(t) r_i^k(x^k_i(t)) \right)} +  \div(F_i^k)\, ,
 \quad t\in \mathcal{T}_i^k \, ,
 \\
 \div V_i^k =0 \,
 \end{cases}
 \label{iteration: BB Vki eqn}
\end{equation}
where the source/sink term $S_i^k {\frac{d}{dt} \left( \eta_i^k \zeta_i^k(t) r_i^k(x^k_i(t)) \right)}$ was designed in the previous section to be supported around $x^k_i(t)$ at scale $r^k_i(x^k_i(t))$ and will be responsible for the error cancellation. Physically, this term represents the gain or shedding of momentum due to the size and speed variation of the building block.

The final two essential components of our construction are the auxiliary building blocks and the time corrector  introduced in Section~\ref{sec:auxiliary} and~\ref{sec:timecorr},
 respectively. To understand the necessity of these objects, we closely inspect the error cancellation procedure, which is broadly described in  Section~\ref{sec:ta-errcanc}.
To cancel the error $R_q$, at first, we hope to use the time average of $S_i^k {\frac{d}{dt} \left( \eta_i^k \zeta_i^k(t) r_i^k(x^k_i(t)) \right)}$ on the interval $\mathcal{T}^k$, denoted as 
$P_{\tau_{q+1}} \left(S_i^k \frac{d}{dt} \left\{ \eta_i^k \zeta_i^k(t) r_i^k(x^k_i(t)) \right\} \right)$. Hence we hope that this term approximates $\div R_q$ up to an error whose anti-divergence  is sufficiently small to be included in the smaller error $R_{q+1}$. Based on this, we define a corrector $Q_{q+1}$, first described in Section~\ref{sec:ta-errcanc}, which we add in the perturbation of $u_q$, such that $\partial_t Q_{q+1}$ cancels the difference $S_i^k \frac{d}{dt} \left\{ \eta_i^k \zeta_i^k(t) r_i^k(x^k_i(t)) \right\} - P_{\tau_{q+1}} \left(S_i^k \frac{d}{dt} \left\{ \eta_i^k \zeta_i^k(t) r_i^k(x^k_i(t)) \right\} \right)$. However, it turns out that for the corrector defined in this manner, the $L^p$ norm of the vorticity in $ Q_{q+1}$ becomes uncontrollable, since the support of $\partial_t Q_{q+1}$ lies in a thin strip of the order of the size of $r_i^k$ (which is small in our construction) around the trajectory of the building block. Therefore, we pay $(r_i^k)^{-1}$ (typically quite large) for the spatial derivative.


To remedy the situation, we introduce in Section~\ref{sec:auxiliary} the idea of an auxiliary building block $U_{q+1}$ and define the corrector $Q_{q+1}$ in Section~\ref{sec:timecorr}
 based on $U_{q+1}$ instead. We choose $U_{q+1}$ such that at any time $t$ in some $ \mathcal{T}^k_i$, 
$U_{q+1}(x, t)$ has the same space average of $S^k_i$ and the support of $U_{q+1}(\cdot, t)$ lies in a fixed ball centered around $S^k_i$ but with bigger radius, of size $\sim \lambda_{q+1}^{-1}$. Roughly, speaking the term $U_{q+1}(\cdot, t)$ runs parallel to the term $S^k_i(\cdot, t)$ as such we can absorb their difference in the new error $\div R_{q+1}$. The error cancellation happens then thanks to the term $P_{\tau_{q+1}} \left(S_i^k \frac{d}{dt} \left\{ \eta_i^k \zeta_i^k(t) r_i^k(x^k_i(t)) \right\} \right)$, which cancels $\div R_{q}$. Correspondingly we introduce the time corrector $Q_{q+1}$ based on $U_{q+1}$ instead of  $S_i^k$.

We collect the definition of our perturbation (principal part and correction), the associated Reynolds stresses and the pressure in Section~\ref{sec:velocity perturbation}. Finally, in section~\ref{sec:est-pert-rey}, we present their estimates and give choice of parameters that allow us to close the proof of Proposition~\ref{prop:iteration}.

\subsection{Mollification Step}\label{section: mollification}
Let $\ell >0$ be a scale parameter that will be chosen later. We define 
\begin{equation}\label{defn:uRell}
    u_\ell = u_q \ast \rho_\ell \, ,
    \quad
    p_\ell = p_q \ast \rho_\ell\, ,
    \quad
    R_\ell = R_q \ast \rho_\ell + u_\ell \otimes u_\ell - (u_q\otimes u_q)\ast \rho_\ell \, ,
\end{equation}
where $\rho_\ell$ is a smooth mollifier in space and time, at length scale $\ell$.
It is immediate to check that $(u_\ell, p_\ell, R_\ell)$ solves the Euler--Reynolds equation
\begin{equation}
    \partial_t u_\ell + \div(u_\ell \otimes u_\ell) + \nabla p_\ell = \div(R_\ell) \, .
\label{mollified eqn}
\end{equation}
We have
\begin{align}
\| R_\ell \|_{L^\infty_t L^1_x} 
&\le \| R_q \ast \rho_\ell \|_{L^\infty_t L^1_x} + \|u_\ell \otimes u_\ell - (u_q\otimes u_q)\ast \rho_\ell\|_{L^\infty_t L^1_x} \nonumber \\
&\le \| R_q \|_{L^\infty_t L^1_x} + \|u_\ell \otimes u_\ell - u_\ell\otimes u_q\|_{L^\infty_t L^1_x} + \|  u_\ell\otimes u_q- (u_q\otimes u_q)\ast \rho_\ell\|_{L^\infty_t L^1_x} \nonumber \\
&\le  \delta_{q+1} + C_0 \ell \|u_q\|_{L^\infty_t L^2_x} \| D_{x,t} u_q \|_{L^\infty_t L^2_x} \, .
\label{iteration: Rl estimate}
\end{align}
We choose  
\begin{align}\label{eq:ell}
\ell := 
 \delta_0^{-1/2} \lambda_0^{-\beta} \delta_{q+1}\lambda_q^{-n} \ ,
\end{align}
so that
\begin{equation}
    \| R_\ell \|_{L^\infty_t L^1_x} \le 2 \delta_{q+1}\, ,
\end{equation}
provided $\lambda_0 \ge \lambda_0(C_0)$ is big enough.
The same choice of $\ell$ also provides
\begin{align}
&\norm{R_\ell}_{C_{x, t}^0} 
\leq C \ell^{-2} \norm{R_q}_{L^\infty_t L^1_x} 
+ C \ell^{-2} \| u_q\|_{L^\infty_t L^2_x}^2
\leq C \delta_0^2 \lambda_0^{2\beta} \delta_{q+1}^{-2} \lambda^{2n}_q = C   \lambda^{2n+2\beta\sigma}_q\, , 
\\
& \norm{R_\ell}_{C_{x, t}^1} \le \ell^{-1} \norm{R_\ell}_{C_{x, t}^0}
\le C  \lambda^{3n+3 \beta\sigma}_q \, ,
\end{align}
and
\begin{equation}\label{uell-u}
\| u_\ell -u_q\|_{L^\infty_tL^2_x} \leq C \ell \| D_{x,t} u_q \|_{L^\infty_{t}L^2_x} \leq \delta_0^{-1/2} \lambda_0^{-\beta} \delta_{q+1} = \lambda_1^{\beta}\lambda_0^{-\beta/2} \lambda_{q+1}^{-\beta} =  (\lambda_0^{-\beta/2} \lambda_{q+1}^{-\beta/2}) \delta_{q+1}^{1/2} \, .
\end{equation}
Notice that the presence of $\lambda_0^{-\beta}$ in the definition of $\ell$ is useful to make $\| u_\ell -u_q\|_{L^\infty_tL^2_x}$ small for every $q$, including in particular $q=0$; this will be important in Remark~\ref{rmk:control ending points} below.

\subsection{Error Decomposition}
\label{sec:err-dec}
%
%
%
%

\begin{lemma}[{Error Decomposition}]\label{lemma:error dec}
Given $\lambda_{q+1} \geq 8, \delta_{q+1} >0 $, there exist unitary vectors $\xi_1, \xi_2, \xi_3$,$\xi_4$ in $\mathbb{R}^2$ with rationally dependent components such that the following holds. For every $R_\ell \in C^\infty(\mathbb T^2 \times [0,1];  {\rm Sym}_2)$
\begin{enumerate}[label = (\roman*)]
    \item The time period of the curve $s \to s \xi_i$ in $\mathbb{T}^2$ is $c_i \lambda_{q+1}$ for $c_i \in [1, 3]$,
    \item  The following decomposition holds:
    \begin{align}\label{eq:error dec}
     - \div(R_\ell) = \div\left(\sum_{i=1}^4 a_i(x,t) \xi_i \otimes \xi_i\right) + \nabla P^{\, d} \, ,
     \end{align}
     \item The functions $a_i(x, t)$ are smooth and satisfy
\begin{align}
a_i(x,t) \ge \delta_{q+1} \, ,
\quad 
\| a_i \|_{L^\infty_t L^1_x} \le 192
 \delta_{q+1} \, ,
\quad 
\norm{a_i}_{C^{0}_{x, t}} \leq C(\delta_{q+1}+ \norm{R_\ell}_{C^{0}_{x, t}}), \quad \norm{a_i}_{C^{1}_{x, t}} \leq C \norm{R_\ell}_{C^{1}_{x, t}}.
\label{eq:est_a}
\end{align}
\end{enumerate}
\end{lemma}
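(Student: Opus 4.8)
# Proof Plan for Lemma 3.3 (Error Decomposition)

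The plan is to produce the four directions $\xi_i$ first — purely as a number-theoretic/geometric construction independent of $R_\ell$ — and then write the standard pointwise matrix decomposition of a symmetric $2\times 2$ tensor in terms of the four fixed rank-one tensors $\xi_i\otimes\xi_i$, absorbing the non-matching part into a gradient.

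\textbf{Step 1: Choice of the directions.} For the trajectory $s\mapsto s\xi_i$ on $\mathbb{T}^2=[-\pi,\pi]^2$ to close up with period comparable to $\lambda_{q+1}$, I would take $\xi_i$ proportional to an integer vector $(m_i,n_i)\in\mathbb{Z}^2$ with $\gcd(m_i,n_i)=1$ and $|(m_i,n_i)|\sim \lambda_{q+1}$; then $s\mapsto s\xi_i$ is $2\pi|(m_i,n_i)|$-periodic, so after normalizing $\xi_i=(m_i,n_i)/|(m_i,n_i)|$ the period is $|(m_i,n_i)|$, which can be arranged to lie in $[\lambda_{q+1},3\lambda_{q+1}]$ (hence $c_i\in[1,3]$) by choosing $m_i,n_i$ near $\lambda_{q+1}/\sqrt2$ — e.g. four slightly perturbed choices such as $(N,N+1)$, $(N,N-1)$, $(N+1,N)$, $(N-1,N)$ for suitable $N\sim\lambda_{q+1}/\sqrt 2$, adjusting parity to keep the gcd equal to $1$. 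Crucially I need the four rank-one tensors $\{\xi_i\otimes\xi_i\}_{i=1}^4$ to positively span the cone of symmetric positive-definite matrices; since $\mathrm{Sym}_2\cong\mathbb{R}^3$, four generic directions suffice, and directions near the two coordinate axes plus near the two diagonals (which is what the above choices give, asymptotically as $\lambda_{q+1}\to\infty$) do span. I would record a quantitative bound: there is a universal $c_0>0$ such that for these four directions, $\mathrm{Id} = \sum_i \gamma_i \xi_i\otimes\xi_i$ with $\gamma_i\in[c_0, c_0^{-1}]$, and more generally every $M\in\mathrm{Sym}_2$ with $|M|\le 1$ can be written $M=\sum_i \mu_i\,\xi_i\otimes\xi_i$ with $|\mu_i|\le C$. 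This is where a little care is needed because the $\xi_i$ depend on $\lambda_{q+1}$; I would fix $\lambda_{q+1}$ large enough (which is fine, the statement only claims existence for $\lambda_{q+1}\ge 8$, and one checks $8$ already works or one simply enlarges) so the spanning constants are uniform.

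\textbf{Step 2: The decomposition.} Apply the pointwise linear-algebra identity to $M(x,t):=-R_\ell(x,t)$: write
\begin{equation}
-R_\ell(x,t) \;=\; \sum_{i=1}^4 \widetilde a_i(x,t)\,\xi_i\otimes\xi_i \;-\; \rho(x,t)\,\mathrm{Id},
\end{equation}
where I choose the scalar $\rho(x,t)$ large enough — e.g. $\rho := C(\delta_{q+1}+\|R_\ell(\cdot,t)\|_{C^0_x})$ with the first decomposition coefficients, then re-expand $\rho\,\mathrm{Id}=\sum_i \rho\gamma_i\,\xi_i\otimes\xi_i$ — so that the combined coefficients
\begin{equation}
a_i(x,t) \;:=\; \widetilde a_i(x,t) + \rho(x,t)\gamma_i
\end{equation}
are all $\ge \delta_{q+1}$; this uses the positive lower bound $\gamma_i\ge c_0$. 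Taking the divergence, $-\div(R_\ell)=\div\big(\sum_i a_i\,\xi_i\otimes\xi_i\big) - \nabla\rho$, so \eqref{eq:error dec} holds with $P^{\,d}=-\rho$. Note $\rho$ depends only on $t$ only if we make it so — actually I would let $\rho=\rho(t):=C(\delta_{q+1}+\|R_\ell(\cdot,t)\|_{C^0_x})$ depend on time only, which still gives a gradient (in $x$) when we need $\nabla_x P^{\,d}$; if the scheme needs a genuine function of $(x,t)$ that is also fine since $\nabla$ here is the spatial gradient. Either way the point is that $\rho$ dominates $|R_\ell|$ pointwise with room to spare.

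\textbf{Step 3: Estimates on $a_i$.} Smoothness of $a_i$ is inherited from smoothness of $R_\ell$ and of the (constant) linear-algebra coefficients, plus smoothness of $t\mapsto\|R_\ell(\cdot,t)\|_{C^0_x}$ — here I would instead use the slightly smoother choice $\rho(t)^2 = C(\delta_{q+1}^2 + \int|R_\ell|^2)$ or simply $\rho$ a fixed smooth function dominating $\|R_\ell\|_{C^0_{x,t}}$, to avoid any regularity issue with the sup; taking $\rho$ a constant $\ge C(\delta_{q+1}+\|R_\ell\|_{C^0_{x,t}})$ is cleanest and still yields the stated $C^0$ and $C^1$ bounds. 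Then $\|a_i\|_{C^0_{x,t}} \le C\|R_\ell\|_{C^0_{x,t}} + C\rho \le C(\delta_{q+1}+\|R_\ell\|_{C^0_{x,t}})$, and since a constant has zero derivative, $\|a_i\|_{C^1_{x,t}}\le C\|R_\ell\|_{C^1_{x,t}}$. For the $L^1$ bound: $\|a_i\|_{L^\infty_t L^1_x}\le C\|R_\ell\|_{L^\infty_t L^1_x} + \rho|\mathbb{T}^2|$; with $\|R_\ell\|_{L^\infty_t L^1_x}\le 2\delta_{q+1}$ from the mollification step and $\rho\sim\delta_{q+1}$ (on a bounded domain, choosing the constant appropriately), this gives $\|a_i\|_{L^\infty_t L^1_x}\le 192\,\delta_{q+1}$ after fixing constants — I would just track the numerology to land on $192$, or note that the exact constant is irrelevant and $192$ is a safe choice once $\rho$ is fixed proportional to $\delta_{q+1}$ with the right factor.

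\textbf{Main obstacle.} The only genuinely delicate point is Step 1: ensuring simultaneously (a) rationally dependent $\xi_i$ with trajectory period in the narrow window $[\lambda_{q+1},3\lambda_{q+1}]$, and (b) that the four rank-one tensors span $\mathrm{Sym}_2$ \emph{with constants uniform in $\lambda_{q+1}$}. The tension is that forcing the period to be exactly $\sim\lambda_{q+1}$ pins down the $\xi_i$ up to $O(1/\lambda_{q+1})$ precision, so one cannot choose them freely; one must exhibit an explicit family (like the $(N,N\pm1),(N\pm1,N)$ family above) and check the spanning is non-degenerate with a limit argument as $\lambda_{q+1}\to\infty$, then invoke largeness of $\lambda_{q+1}$ (or a direct check at $\lambda_{q+1}=8$) to conclude. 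Everything after that is routine linear algebra on $2\times2$ symmetric matrices plus the already-established bounds $\|R_\ell\|_{L^\infty_t L^1_x}\le 2\delta_{q+1}$ and $\|R_\ell\|_{C^1_{x,t}}\lesssim\lambda_q^{3n+3\beta\sigma}$.
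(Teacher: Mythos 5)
Your overall strategy coincides with the paper's (four directions close to the two coordinate axes and the two diagonals, tilted to rational slope so the trajectory has period $\sim\lambda_{q+1}$; a pointwise decomposition of $-R_\ell+\rho\,\mathrm{Id}$ with positive coefficients, the multiple of the identity being absorbed into $\nabla P^{\,d}$), but the concrete family of directions you exhibit in Step 1 does not work. The unit vectors generated by $(N,N+1)$, $(N,N-1)$, $(N+1,N)$, $(N-1,N)$ with $N\sim\lambda_{q+1}/\sqrt2$ all converge to $(1,1)/\sqrt2$; they are \emph{not} ``near the two coordinate axes plus near the two diagonals''. Four nearly parallel rank-one tensors cannot positively span $\mathrm{Sym}_2$ with uniform constants: the trace-free parts of the $\xi_i\otimes\xi_i$ are all close to one and the same nonzero matrix, so no nonnegative bounded combination $\sum_i\gamma_i\,\xi_i\otimes\xi_i$ can equal the identity once the directions cluster. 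Hence the quantitative spanning bound you ``record'' is false for this family, and with it the lower bound $a_i\ge\delta_{q+1}$ collapses. The paper avoids exactly this trap by rotating the fixed, well-separated frame $e_1,\,e_2,\,(e_1\pm e_2)/\sqrt2$ by the \emph{single} small angle $-\arctan(\lambda_{q+1}^{-1})$, which produces rational directions such as $\xi_1\propto(\lambda_{q+1},1)$ and $\xi_3\propto(\lambda_{q+1}-1,\lambda_{q+1}+1)$ with period $c_i\lambda_{q+1}$, $c_i\in[1,3]$, while the explicit coefficients $\overline\Gamma_i$ stay in $[1/4,2]$ uniformly for $\lambda_{q+1}\ge 8$. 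Your construction can be repaired in the same spirit (four integer vectors near the four standard directions, each of length $\sim\lambda_{q+1}$), but as written the key step fails.

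There is a second gap, in your choice of $\rho$ in Steps 2--3, which is internally inconsistent. For the pointwise positivity $a_i\ge\delta_{q+1}$ you need $\rho$ to dominate $|R_\ell(x,t)|$ pointwise, and you propose $\rho\gtrsim\delta_{q+1}+\|R_\ell\|_{C^0_{x,t}}$ (constant, or depending on $t$ only); but for the bound $\|a_i\|_{L^\infty_tL^1_x}\le192\,\delta_{q+1}$ you then assert $\rho\sim\delta_{q+1}$. These are incompatible in the scheme: only $\|R_\ell\|_{L^\infty_tL^1_x}\le 2\delta_{q+1}$ is small, while $\|R_\ell\|_{C^0_{x,t}}\sim\lambda_q^{2n+2\beta\sigma}\gg\delta_{q+1}$, so a constant (or $t$-only) $\rho$ large enough for positivity destroys the $L^1$ estimate by the factor $\|R_\ell\|_{C^0}/\delta_{q+1}$; conversely your smoother alternative $\rho(t)^2=C(\delta_{q+1}^2+\int|R_\ell|^2)$ does not dominate $|R_\ell|$ pointwise and loses positivity. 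The consistent choice is genuinely $x$-dependent and pointwise, exactly the paper's $\varsigma(x,t)=16\bigl(|R_\ell(x,t)|^2+\delta_{q+1}^2\bigr)^{1/2}$: it is smooth (no issue with sup norms), dominates $|R_\ell|$ pointwise, satisfies $\|\varsigma(\cdot,t)\|_{L^1_x}\le C(\|R_\ell\|_{L^1_x}+\delta_{q+1})\lesssim\delta_{q+1}$ and $|\nabla\varsigma|\le 16|\nabla R_\ell|$, so all three estimates in item (iii) follow at once, and $\div(\varsigma\,\mathrm{Id})=\nabla\varsigma$ gives $P^{\,d}=\varsigma$. With these two corrections the rest of your argument (taking the divergence, inheriting smoothness, and using $\|R_\ell\|_{L^\infty_tL^1_x}\le2\delta_{q+1}$ for the $L^1$ bound) matches the paper's proof.
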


%
%

\begin{proof}
Following \cite{BrueColombo23}, we define four unitary vectors as follows:
\begin{align}
e_1 \coloneqq (1, 0)^T, \quad e_2 \coloneqq (0, 1)^T, \quad e_3 \coloneqq \left(\frac{1}{\sqrt{2}}, \frac{1}{\sqrt{2}}\right)^T, \quad   e_4 \coloneqq \left(\frac{1}{\sqrt{2}}, -\frac{1}{\sqrt{2}}\right)^T.
\end{align}
Next, let $\ol{R}$ be a symmetric $2$-by-$2$ matrix. We can decompose $\ol{R}$ as follows:
\begin{align}\label{dec-r-bar}
\ol{R} = \sum_{i=1}^4 \ol{\Gamma}_i(\ol{R}) \, e_i \otimes e_i,
\end{align}
where $\ol{\Gamma}_i$ are smooth functions given by
\begin{align}
\ol{\Gamma}_1(\ol{R}) \coloneqq R_{1, 1} - R_{1, 2} - \frac{1}{2}, \quad 
\ol{\Gamma}_2(\ol{R}) \coloneqq R_{2, 2} - R_{1, 2} - \frac{1}{2}, \quad 
\ol{\Gamma}_3(\ol{R}) \coloneqq 2 R_{1, 2} + \frac{1}{2}, \quad 
\ol{\Gamma}_4(\ol{R}) \coloneqq \frac{1}{2}.
\end{align}
We notice that when $\norm{\ol{R} - I_{2 \times 2}}_{\infty} < 1/8$ then $1/4 \leq \ol{{\Gamma}}_i  \leq 2$ and that
$\max_{i, j, k} \left|\frac{\partial \ol{\Gamma}_i}{\partial \ol{R}_{j, k}}\right| \leq 2$.

Next, we let $K_{\theta_0}$ denote the rotation matrix that rotates a vector in $\mathbb{R}^2$ by an angle $\theta_0$ in the counterclockwise direction, where
\begin{align}
\theta_0 \coloneqq - \arctan (\lambda^{-1}_{q+1}).
\end{align}
We define $\xi_1, \xi_2, \xi_3, \xi_4$ to be unitary vectors of $\R^2$ with rationally dependent components as follows:
\begin{align}
\xi_i \coloneqq K_{\theta_0} \, e_i \qquad \forall \; i \in \{1, 2, 3, 4\}.
\end{align}
After writing down the explicit expression of $\xi_i$, for instance $\xi_1=(1+\lambda^{-2}_{q+1})^{-1/2} (1, \lambda^{-1}_{q+1})^T$ and $\xi_3=2^{-1/2}(1+\lambda^{-2}_{q+1})^{1/2} (1- \lambda^{-1}_{q+1}, 1+\lambda^{-1}_{q+1} )^T$ we see that the item (i) in the lemma holds. Moreover, applying \eqref{dec-r-bar} to $\bar R = K^TRK$ and then left and right multiplying both sides by $K$ and  $K^T$ respectively, for a given  $2$-by-$2$ symmetric matrix $R$, we can write
\begin{align}
R = \sum_{i=1}^4 \Gamma_i(R) \, \xi_i \otimes \xi_i,
\qquad \mbox{where }
\Gamma_i(R) \coloneqq \ol{\Gamma}_i(K_{\theta_0}^T \, R \, K_{\theta_0}).
\end{align}
We note that if $\norm{R - I_{2 \times 2}}_{\infty} < 1/16$ and $\lambda_{q+1} \geq 8$ then $\norm{K_{\theta_0}^T \, R \, K_{\theta_0} - I_{2 \times 2}}_{\infty} < 1/8$, which then implies $1/4 \leq {\Gamma}_i  \leq 2$. Moreover, 
\begin{align}
\max_{i, j, k} \left|\frac{\partial \Gamma_i}{\partial R_{j, k}}\right| \leq 4.
\label{error: a silly est}
\end{align}

Next, we define
\begin{align}
a_i(x, t) \coloneqq  \varsigma(x, t) \, \Gamma_i \left(I_{2 \times 2} - \frac{1}{\varsigma(x, t)} R_{\ell}(x, t)\right), \qquad \text{where} \quad \varsigma(x, t) \coloneqq 16 \left(|R_\ell(x, t)|^2 + \delta_{q+1}^2\right)^{\frac{1}{2}}.
\end{align}
From, here we see that
\begin{align}
\sum a_i(x, t) \xi_i \otimes \xi_i = - R_{\ell}(x, t) + \varsigma(x, t) I_{2 \times 2}.
\end{align}
Therefore, the item (ii) holds with pressure defined as $P^{\, d}(x, t) \coloneqq \varsigma(x, t) I_{2 \times 2}$. 
From the lower bounds on the coefficient $\Gamma_i$ and definition of $\varsigma$, we derive the required lower bound on $a_i$. From the upper bound on $\Gamma_i$ and a simple integration in the definition of $a_i$ gives the required estimate on $\norm{a_i}_{L^\infty_t L^1_x}$. From the upper bound on $\Gamma_i$ and on its derivatives in  (\ref{error: a silly est}), we also have
\begin{align}
\norm{a_i}_{C^{0}_{x, t}} \leq C(\delta_{q+1}+ \norm{R_\ell}_{C^{0}_{x, t}}), \qquad \norm{a_i}_{C^{1}_{x, t}} \leq C \norm{R_\ell}_{C^{1}_{x, t}}.
\end{align}


\end{proof}



\subsection{Time Series and Time Cutoffs}
\label{sec:time}

We partition $[0,+\infty)$ into time intervals of length $\tau_{q+1}$. We define,
\begin{align}
\mathcal{T}^k \coloneqq [k\tau_{q+1}, (k+1)\tau_{q+1}),
\end{align}
Each of the $\mathcal{T}^k$ intervals are further divided into four intervals of equal length as
\begin{equation}
    \mathcal{T}^k_i \coloneqq \left[\tau_{q+1} \left(k +\frac{i-1}{4}\right),\; \tau_{q+1} \left(k+ \frac{i}{4}\right)\right) \, ,
    \quad i=1, 2, 3, 4\, , \quad  k\in \N \, .
\end{equation}
It is clear that $\mathcal{T}^k= \bigcup_{i=1}^4 \mathcal{T}^k_i$. 
For future convenience, we also define a slightly shorter version of the time interval $\mathcal{T}^k$ as
\begin{align}
   \ol{\mathcal{T}}^k_i \coloneqq \left[\tau_{q+1} \left(k +\frac{i-1}{4} + \frac{1}{\lambda_{q+1}}\right),\; \tau_{q+1} \left(k+ \frac{i}{4} - \frac{1}{\lambda_{q+1}}\right)\right) \, ,
    \quad i=1, 2, 3, 4\, , \quad  k\in \N \, .
\label{iteration: short tauik}
\end{align}

\begin{definition}[Time-Average Operator]
    Given a time-dependent function $g: \R_+ \to \R$, we introduce the time-average operator 
\begin{equation}\label{eq:Ptau}
    P_{\tau}g(t) := \dashint_{\mathcal{T}^k} g(s)\, ds \, , \quad \quad t\in \mathcal{T}^k \, .
\end{equation}
\end{definition}


Given $R_\ell$ as in \eqref{defn:uRell}, we define $a_i(x,t)$ from Lemma \ref{lemma:error dec}. Next using the definition of $P_\tau$ above, we introduce a shorthand notation
\begin{equation}
    a_i^k(x):= P_{\tau_{q+1}} a_i(x,t)
    =
    \dashint_{\mathcal{T}^k} a_i(x,t)\, dt \, ,
    \quad \text{for some $t\in \mathcal{T}^k$}\, .
\end{equation}
From (\ref{eq:est_a}), we see that
\begin{align}
a_i^k(x) \ge \delta_{q+1} \, ,
\quad 
\| a_i^k \|_{L^1_x} \le 192 \delta_{q+1} \, ,
\quad 
\| a_i^k \|_{C^0_{x}} \leq C  \lambda^{2n+2 \beta \sigma}_q
\, ,
\quad 
\| a_i^k \|_{C^1_{x}}
\le C  \lambda^{3n+3 \beta \sigma}_q
\, .
\label{eq:est_aik}
\end{align}
Also, note that
\begin{equation}\label{eq:a-aik}
    \| P_{\tau_{q+1}} a_i - a_i\|_{L^\infty_t L^1_x} 
    \le \tau_{q+1} 
    \| a_i \|_{C^1_{x,t}}
    \le \tau_{q+1}  \lambda^{3n+3 \beta \sigma}_q
    \, ,
    \quad \text{ for every $t\in \mathcal{T}_i^k$} \, ,
\end{equation}
is going to be small provided $\tau_{q+1}$ is sufficiently small.

\bigskip

Given $k\in \N$ and $i\in \{1,2,3,4\}$, we define a smooth, sharp time cut-off function $\zeta^k_i : \mathbb{R} \to [0, 1]$ associated to the intervals $\mathcal{T}^k_i$ satisfying
 $\supp \zeta^k_i \subset \mathcal{T}^k_i,$
$\zeta_i^k \equiv 1$ in $\ol{\mathcal{T}}_i^k,$
and
    \begin{equation}\label{eqn:zeta-est}
        \norm{\frac{d}{dt}\zeta_i^k}_{L^\infty_t} \le 10 \frac{\lambda_{q+1}}{\tau_{q+1}}.
    \end{equation}

\subsection{Space Dependent Spatial Scale}
\label{sec:spa-scale}
To adapt the building block from Proposition \ref{prop:build-block1}, we need a spatial scale that varies with space. Subsequently, we make the following choice that will eventually allow us to cancel out the error:
\begin{align}
    r^k_i(x) \coloneqq r_{q+1} a^k_i(x) \, ,
    \quad i=1, 2, 3, 4\, , \quad k\in \N \, .
\label{def: rki}
\end{align}


We choose $r_{q+1}$ small enough such that $2 (r^k_i)^{\frac{1}{5}} \leq \lambda_{q+1}^{-1}$. From choices (\ref{eq:parameters}), \eqref{eq:est_aik}
, we impose this by requiring:
{\begin{align}
C  
 \lambda^{2n+2 \beta \sigma}_q r_{q+1} \leq \lambda^{-5}_{q+1}\, . 
\label{iteration: rki small lambda}
\end{align}
}


\subsection{The Trajectory of the Center of the Core}\label{sub:traj cent}
The cutoff function we will use in Proposition \ref{prop:build-block1} is a constant multiplication of $\zeta^k_i$, i.e.,
\begin{align}
\eta(t) = \eta^k_i \zeta^k_i(t),
\end{align}
where $\eta^k_i$ is a constant 
chosen to cancel out the error exactly:
\begin{align}\label{eqn:choiceetamagic}
    (\eta_i^k )^2 =  4 \int_{\mathbb{T}^2}a_i^k(x) \, dx
    \in [4 \delta_{q+1}, 768 \delta_{q+1}]\,
\end{align}
The estimate on the size of $\eta_i^k$ above follow from (\ref{eq:est_aik}).
We selected this value of $\eta^k_i$ at the outset of the proof. However, alternatively, we could have kept the value of $\eta^k_i$ as a free parameter and determine its value in the error cancellation in (\ref{eqn:choiceetamagic}) later on.

Next, we define the trajectory of the center of the core as
\begin{equation}\label{eq:traj1}
        \frac{d}{dt} x^k_i(t) = \frac{ \eta_i^k \zeta_i^k(t)}{r_i^k(x^k_i(t))} \xi_i \, , \quad \quad
        \quad t \in \mathcal{T}_i^k \, .
\end{equation}
We denote $t_0 = \tau_{q+1}(k + \frac{i-1}{4} + \lambda_{q+1}^{-1})$. We fix $x_i^k(t_0)$  so that
\begin{equation}
    \dashint_0^{c_i\lambda_{q+1}} a_i^k(x_i^k(t_0) + s \xi_i) \, ds = \int_{\mathbb{T}^2}a_i^k(x)\, dx =   \frac{  (\eta_i^k )^2}{4}  \, .
\end{equation}
We solve the ODE (\ref{eq:traj1}) both forward and backward in time starting at $t_0$ with position $x^k_i(t_0)$ to obtain the trajectory on the entire interval $\mathcal{T}^k_i$.
We assume
\begin{align}
\lambda_{q+1}^2 r_{q+1} \delta_{q+1}^{1/2} \le \frac{\tau_{q+1}}{200}, \qquad \text{which from (\ref{eq:parameters}) requires} \quad \mu - \beta - 2 - \kappa > 0.
\label{traj: ineq for cutoff}
\end{align}


and we observe that by (\ref{eqn:choiceetamagic}), the trajectory $x^k_i(t)$, $ t \in \mathcal{T}_i^k$, is periodic with period
\begin{equation}\label{eq:imp}
    T_i^k 
    = \int_0^{c_i\lambda_{q+1}} \frac{r_{q+1}}{\eta_i^k} a_i^k(x_i^k(t_0) + s\xi_i)\, ds
    = \frac{c_i\lambda_{q+1} r_{q+1}\eta_i^k}{4} 
\le    8 {c_i\lambda_{q+1} r_{q+1}}\delta_{q+1}^{1/2} \le \frac{\tau_{q+1}}{8 \lambda_{q+1}} \, .
\end{equation}


In particular, we see from \eqref{eq:imp} that the meaning of the assumption \eqref{traj: ineq for cutoff} is to guarantee sufficiently many periods of  $x^k_i(t)$ lie inside $ \overline {\mathcal{T}}_i^k$.
Next we call $M_i^k\in \N$, $M_i^k \ge \lambda_{q+1}$ the number of such periods, namely a natural number such that 
\begin{itemize}
    \item[(i)] $x_i^k(t_0 + m T_i^k) = x_i^k(t_0)$ for every $m\in \N$, $0 \le m \le M_i^k$,

    \item[(ii)] $t_0 + M_i^k T_i^k \le \tau_{q+1}(k + \frac{i}{4} - \lambda_{q+1}^{-1})$ and $t_0 + (M_i^k + 1) T_i^k > \tau_{q+1}(k + \frac{i}{4} - \lambda_{q+1}^{-1})$,
\end{itemize}
 Notice that (ii) can be equivalently rewritten as
 \begin{equation}
 \label{eqn:MT-tau}
 0 \leq  \tau_{q+1}\left(\frac{1}{4} - 2\lambda_{q+1}^{-1}\right) -M_i^k T_i^k < T_i^k=\frac{c_i\lambda_{q+1} r_{q+1}\eta_i^k}{4} 
 \end{equation}
 and that 
\begin{equation}\label{eq:M}
     M_i^k = \left\lfloor \frac{|\overline {\mathcal{T}}_i^k| }{T_i^k} \right\rfloor 
     = \left\lfloor \frac{\tau_{q+1}(\frac{1}{4} - \frac{2}{\lambda_{q+1}})\eta_i^k}{c_i \lambda_{q+1}r_{q+1}\int_{\mathbb{T}^2}a_i^k(x)\, dx} \right\rfloor \ge \lambda_{q+1}\, .
\end{equation}

\subsection{Principal Building Blocks of our perturbation
}\label{sec:princbb}
For every $k \in \mathbb{N}$ and $i = 1,2,3,4$, we define $V_i^k(x, t)$ to be the velocity field $V$ from Proposition \ref{prop:build-block1} applied to the spatial scale $r^k_i$ and the time cutoff $\eta^k_i \zeta^k_i(t)$, introduced in the preceding sections. By Proposition \ref{prop:build-block1}, the velocity field $V^k_i$ solves the following equations
\begin{equation}
\begin{cases}
\partial_t V_i^k + \div(V_i^k\otimes V_i^k) + \nabla P_i^k = 
 S_i^k {\frac{d}{dt} \left( \eta_i^k \zeta_i^k(t) r_i^k(t) \right)} +  \div(F_i^k)\, ,
 \quad t\in \mathcal{T}_i^k \, ,
 \\
 \div V_i^k =0 \,
 \end{cases}
 \label{iteration: BB Vki eqn}
\end{equation}
and the associated pressure $P^k_i$, source/sink term $S^k_i$, and error $F^k_i$ satisfy the following properties. All such properties follow  from  Proposition \ref{prop:build-block1}, the estimates on $r^k_i$ and $\zeta^k_i$ (\ref{iteration: rki small lambda}) and \eqref{eqn:zeta-est}, and the definition of $\eta_i^k$ in \eqref{eqn:choiceetamagic}. 
\begin{enumerate}[label = (\roman*)]
\item {}  
Since 
$
\norm{\nabla \log r^k_i}_{L^\infty_x} = \norm{({a^k_i})^{-1}{\nabla a^k_i}}_{L^\infty_x} \leq  C \delta_{q+1}^{-1} \lambda_q^{3 n+3  \beta \sigma}\leq  C \lambda_q^{3 n+4 \beta \sigma}
$
by \eqref{eq:est_aik}, we get the estimate on the error $F^k_i$:
\begin{align}
\norm{F_i^k}_{L_t^\infty L_x^1} 
&\leq C ( \eta^k_i )^2 \| r^k_i \|_{L^\infty_x}^{\frac{1}{2}}(1 + \| \nabla \log(r^k_i)\|_{L^\infty_x}) \nonumber \\
& \leq C \delta_{q+1} \, r_{q+1}^{\frac{1}{2}} \, (\lambda_q^{2 n+2 \beta \sigma})^{\frac{1}{2}} \left(\lambda_q^{3 n+4 \beta \sigma} \right) \nonumber
\\& 
= C \delta_{q+1} \, r_{q+1}^{\frac{1}{2}} \, \lambda_q^{4 n+5 \beta \sigma} \, .
\label{error: Fki}
\end{align}

\item {The $L^p$ estimate on $V^k_i$ for $p = 3/2$ and $p = 2$ are} 
\begin{align}
& \norm{V^k_i}_{L^\infty_t L^{3/2}_x} \leq C \eta^k_i \norm{r^k_i}^{1/3}_{L^\infty_x} \leq  C \delta_{q+1}^{\frac{1}{2}} r_{q+1}^{\frac{1}{3}}\left(\lambda_q^{2n+2 \beta \sigma} \right)
^{\frac{1}{3}}
\label{iteration: L32 Vki}
\\
& \norm{V^k_i}_{L^\infty_t L^{2}_x} 
\leq C \eta^k_i \leq C \delta_{q+1}^{\frac{1}{2}} .
\label{iteration: L2 Vki}
\end{align}
For $p\ge 1$, the the $L^p$ norm of $DV$ is controlled as
\begin{align}
&\norm{D V^k_i}_{L^\infty_t L^{p}_x} \leq C \eta^k_i \norm{(r^k_i)^{\frac{2}{p} - 2}}_{L^\infty_x} \leq C \delta_{q+1}^{\frac{1}{2}} r_{q+1}^{\frac{2}{p} - 2} \delta_{q+1}^{\frac{2}{p} - 2} \, .
\label{iteration: Lp DVki}
\end{align}
Finally, the $L^{p}$ estimate on $\partial_t V$ and $\partial_t DV$ reads as
\begin{align}
\norm{\partial_t V^k_i}_{L^1_t L^p_x}  + &( \delta_{q+1} r_{q+1})\| \partial_tD V^k_i \|_{L^\infty_t L^p_x}  
\\& \leq C (\eta^k_i)^2 \norm{(r^k_i)^{-3}}_{L^\infty_x} \left( 1 + \norm{\nabla r^k_i}_{L^\infty_x} \right) + C  \eta^k_i \norm{(\zeta^k_i)^\prime}_{L^\infty_t} \norm{(r^k_i)^{-1}}_{L^\infty_x} \nonumber \\
& \leq C \delta_{q+1}^{-2} r_{q+1}^{-3} (1 + r_{q+1} \lambda_q^{3n+3 \beta \sigma}  
 ) + C \delta_{q+1}^{-\frac{1}{2}} (\lambda_{q+1} \tau_{q+1}^{-1}) r_{q+1}^{-1} \nonumber \\
& \leq C \delta_{q+1}^{-2} r_{q+1}^{-3}. 
\label{iteration: Lp part Vki}
\end{align}
The reasoning behind the last inequality is as follows. From \eqref{iteration: rki small lambda}, we see that $ C r_{q+1} \lambda_q^{3n+3 \beta \sigma} \leq \lambda_{q+1}^{-5}\lambda_q^{n + \beta \sigma}$. In rest of the paper, we impose 
\begin{align}
\lambda_{q+1}^{-5}\lambda_q^{n + \beta \sigma} \leq 1.
\label{extra cond. 5}
\end{align}
Finally, from \eqref{traj: ineq for cutoff}, we see that $ C \delta_{q+1}^{-\frac{1}{2}} (\lambda_{q+1} \tau_{q+1}^{-1}) r_{q+1}^{-1} \leq  \delta_{q+1}^{-1}  r_{q+1}^{-2} \lambda_{q+1}^{-1}$ which is then controlled by $\delta_{q+1}^{-2} r_{q+1}^{-3}$.

\item {The source term $S^k_i$} satisfies for every $p\in (1,\infty)$
\begin{align}\label{est:ki}
&\| S_i^k \|_{L^\infty_t L^p_x} \le C(p) \norm{(r^k_i)^{\frac{2}{p} - 2}}_{L^\infty_x} \leq C(p) \delta_{q+1}^{\frac{2}{p} - 2} r_{q+1}^{{\frac{2}{p} - 2}} \,  
,
\end{align}
\begin{align}\label{eq:S_i^k mean}
\supp S_i^k(\cdot, t) \subseteq B_{\lambda_{q+1}^{-1}}(x_i^k(t)), \qquad   \int_{\mathbb{T}^2} S_i^k(x,t)\, {\rm d} x =   \xi_i \, .
\end{align}
   In fact, in Proposition \ref{prop:build-block1}, the average of $S_i^k$ should be $2\pi \xi_i$. However, for the sake of simplicity we normalize $V_i^k$, rescaling accordingly the time variable, namely replacing it by  $(2\pi)^{-1}V_i^k(x,(2\pi)^{-1} t)$ with a small abuse of notation, so that \eqref{eq:S_i^k mean} holds and all other properties stated above continue to hold. 
\end{enumerate}

\subsection{Auxiliary Building Block}\label{sec:auxiliary}

As discussed in Section \ref{sec:ta-errcanc}, the time-average of the source term in (\ref{iteration: BB Vki eqn}) is responsible for cancelling the error in time average. However, to produce smaller errors in the error cancellation process, we first replace the source term
\begin{align}\label{err-canc-from-here}
S_i^k \frac{d}{dt} \left( \eta_i^k \zeta_i^k(t) r_i^k(t) \right),
\end{align}
with a different term of the form 
\begin{equation}\label{eq:U}
    U_{q+1}(x,t) := \frac{d}{dt} \left( \eta_i^k \zeta_i^k(t) r_i^k(t) \right) \wt{U}_i^k(x - x_i^k(t)) \xi_i,
    \quad t \in \mathcal{T}_i^k.
\end{equation}
This new term is designed such that the anti-divergence of the difference $ S^k_i(x,t) - \wt{U}_i^k(x - x_i^k(t)) \xi_i $ is small, which ensures that the error introduced by replacing \eqref{err-canc-from-here} with \( U_{q+1} \) is small. In addition, the term $U_{q+1}$ satisfies two more properties: 

\begin{enumerate}
    \item Constant average along trajectories (see (i) below),
    
    \item Mild concentration of support: The support is concentrated on a set of size approximately \( \lambda_{q+1}^{-1} \), which is significantly larger than the support of \( S^k_i \), which is of size \( r_{q+1} \).
\end{enumerate}
These properties contribute to reducing errors in the error cancellation process.

\bigskip

 We define $\wt{U}_i^k(x) \ge 0$ a scalar function supported on a ball of radius $10 \lambda_{q+1}^{-1}$ such that
\begin{itemize}
    \item[(i)]  Space-average is one:
    \begin{equation}\label{eqn:tildeu1}
        \dashint_{\mathbb{T}^2} \wt{U}_i^k(x)\, dx = 1 \, ,
    \end{equation}

    \item[(ii)] For every $x\in \mathbb{T}^2$, it holds
     \begin{equation}\label{eqn:tildeu2}
         \dashint_0^{c_i \lambda_{q+1}} \wt{U}_i^k(x - (x_i^k(t_0) + s \xi_i))\, ds
         = 1 \, .
     \end{equation}

     \item[(iii)] For every $p\in [1,\infty]$, it holds
     \begin{equation}\label{eq:est tildeU}
         \| \wt{U}_i^k \|_{L^p} \le C(p) \lambda_{q+1}^{2 - \frac{2}{p}}\,, \qquad \text{and} \qquad \| D \wt{U}_i^k \|_{L^p} \le C(p) \lambda_{q+1}^{3 - \frac{2}{p}}\, .
     \end{equation}
\end{itemize}
To build such a scalar function $\wt{U}_i^k$ 
 we argue as follows. First, we fix any nonnegative $\Omega_0 \in C^\infty_c(B_{10}(0))$, which is bigger than $1$ in $B_5(0)$. We rescale it by $\lambda_{q+1}$ as $\Omega = \lambda_{q+1}^2 \Omega_0(\lambda_{q+1} \cdot)$, so that it is supported on a ball of radius $10 \lambda_{q+1}^{-1}$ and bigger than $ \lambda_{q+1}^{-2}$ on half of such a ball. We periodize it as a function on the torus and we define
\begin{equation}
    \overline \Omega  (x) := \dashint_0^{c_i \lambda_{q+1}} \Omega (x - (x_i^k(t_0) + s \xi_i))\, ds,
\end{equation}
which is a function invariant on the set $\{ x\in \mathbb{T}^2: x= x_i^k(t_0) + s\xi_i\}$ and bounded below by a constant $c>0$ independent of $\lambda_{q+1}$. We then set $\wt{U}^k_i = \Omega / \overline \Omega$ and observe that it satisfies \eqref{eqn:tildeu2}, which in turn implies \eqref{eqn:tildeu1} by further integrating with respect to the variable $x$.

\begin{proposition}\label{prop:auxiliary}
    Assume \eqref{traj: ineq for cutoff} given by  $\lambda_{q+1}^2 r_{q+1} \delta_{q+1}^{1/2} \le \frac{\tau_{q+1}}{200}$, let 
    $U_{q+1}$ be as in 
    \eqref{eq:U}. Then, 
   
    \begin{equation}
    \label{eqn:estU}
    \norm{U_{q+1}}_{L^\infty_t L^p_x} + \lambda_{q+1}^{-1} \norm{D U_{q+1}}_{L^\infty_t L^p_x}
     \leq C(p)  \lambda_{q+1}^{2 - \frac{2}{p}}  \norm{a^k_i}_{ C^1} 
    \end{equation}
 {and there exists a smooth symmetric tensor $G_i^k$ such that}

\begin{equation}\label{eqn:ptauu}
    P_{\tau_{q+1}} U_{q+1}(x,t) = \dashint_{\mathcal{T}^k}U_{q+1}(x,s) \, ds
   = \sum_{i=1}^4 \div\left(a_i^k(x) \xi_i \otimes \xi_i +  G_i^k(x,t) \right) \, ,
    \quad t\in \mathcal{T}^k\, .
\end{equation}
    \begin{align}\label{eq:Gik}
        \| G_i^k \|_{L^\infty_t L^1_x} 
        \le
       C \frac{\| a_i^k \|_{C^1}}{\lambda_{q+1}} \, .
    \end{align}
\end{proposition}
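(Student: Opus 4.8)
The plan is to estimate the three quantities in the order: first the $L^p$ bounds on $U_{q+1}$ and $DU_{q+1}$, then the identity \eqref{eqn:ptauu} (which defines $G_i^k$), and finally the $L^1$ bound \eqref{eq:Gik}. For the first part, I would simply differentiate \eqref{eq:U}. Writing $g_i^k(t) := \frac{d}{dt}(\eta_i^k \zeta_i^k(t) r_i^k(t))$, we have $|g_i^k(t)| \le |\eta_i^k|\,|(\zeta_i^k)'|\,|r_i^k| + |\eta_i^k|\,|\zeta_i^k|\,|(r_i^k)'|$, which, using \eqref{eqn:zeta-est}, $r_i^k = r_{q+1} a_i^k$, the bound $|(r_i^k)'| = |\nabla r_i^k \cdot (x_i^k)'| = \eta_i^k|\zeta_i^k|\,|\nabla a_i^k| / a_i^k \le C\delta_{q+1}^{1/2}\|a_i^k\|_{C^1}/\delta_{q+1}$ (since $a_i^k \ge \delta_{q+1}$), and \eqref{traj: ineq for cutoff}, bounds $\sup_t |g_i^k(t)| \le C r_{q+1}\|a_i^k\|_{C^1}$ up to the harmless logarithmic factors — actually one just needs $\sup_t |g_i^k| \le C(\lambda_{q+1}/\tau_{q+1}) r_{q+1} \delta_{q+1}^{1/2}\|a_i^k\|_{C^0} + C\delta_{q+1}^{1/2}\|a_i^k\|_{C^1}$, and after multiplying by $\|\wt U_i^k\|_{L^p} \le C(p)\lambda_{q+1}^{2-2/p}$ from \eqref{eq:est tildeU} and simplifying via the parameter inequalities, the stated bound follows. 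For $DU_{q+1}$ one differentiates the profile $\wt U_i^k(x - x_i^k(t))$ in $x$, picking up $\|D\wt U_i^k\|_{L^p} \le C(p)\lambda_{q+1}^{3-2/p}$, i.e. exactly one extra factor of $\lambda_{q+1}$, giving \eqref{eqn:estU}.

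For the identity \eqref{eqn:ptauu}, the computation mirrors the sketch in Section~\ref{sec:ta-errcanc}. On each subinterval $\mathcal{T}_i^k$ where $\eta_i^k\zeta_i^k$ is supported, integrate by parts in $t$:
\begin{align*}
\int_{\mathcal{T}_i^k} g_i^k(t)\,\wt U_i^k(x - x_i^k(t))\,\xi_i\, dt
&= -\int_{\mathcal{T}_i^k} \eta_i^k\zeta_i^k(t) r_i^k(t)\, \frac{d}{dt}\big(\wt U_i^k(x - x_i^k(t))\big)\xi_i\, dt\\
&= \int_{\mathcal{T}_i^k} \eta_i^k\zeta_i^k(t) r_i^k(t)\, \big((x_i^k)'(t)\cdot \nabla\big)\wt U_i^k(x - x_i^k(t))\,\xi_i\, dt,
\end{align*}
the boundary terms vanishing since $\zeta_i^k$ is supported in the interior of $\mathcal{T}_i^k$. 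Now use the ODE \eqref{eq:traj1}, $(x_i^k)'(t) = \eta_i^k\zeta_i^k(t) r_i^k(t)^{-1}\xi_i$, so the product $\eta_i^k\zeta_i^k(t) r_i^k(t)\cdot(x_i^k)'(t) = (\eta_i^k\zeta_i^k(t))^2\,\xi_i$, and the integrand becomes $(\eta_i^k\zeta_i^k(t))^2 (\xi_i\cdot\nabla)\wt U_i^k(x - x_i^k(t))\,\xi_i = \div\big((\eta_i^k\zeta_i^k(t))^2 \wt U_i^k(x - x_i^k(t))\,\xi_i\otimes\xi_i\big)$. Dividing by $\tau_{q+1}$ and summing over $i$, we get $P_{\tau_{q+1}}U_{q+1}(x,t) = \sum_i \div\big(h_i^k(x)\,\xi_i\otimes\xi_i\big)$ with $h_i^k(x) := \tau_{q+1}^{-1}\int_{\mathcal{T}_i^k}(\eta_i^k\zeta_i^k(s))^2\,\wt U_i^k(x - x_i^k(s))\,ds$. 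It then remains to show $h_i^k(x) = a_i^k(x) + (\text{something of divergence form, absorbed into }G_i^k)$; I define $G_i^k$ precisely so that \eqref{eqn:ptauu} holds, i.e. $\div G_i^k := \div(h_i^k \xi_i\otimes\xi_i) - \div(a_i^k\xi_i\otimes\xi_i)$, realized via the operator $\mathcal{R}_0$ (so that $G_i^k$ is symmetric).

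The main obstacle — and the crux of the design — is the $L^1$ estimate \eqref{eq:Gik}: I must show $h_i^k(x) - a_i^k(x)$ is, in an appropriate sense, of size $\|a_i^k\|_{C^1}/\lambda_{q+1}$ after applying the anti-divergence. The idea is that the trajectory $x_i^k(s)$ runs around the line through $x_i^k(t_0)$ in direction $\xi_i$ a number $M_i^k \ge \lambda_{q+1}$ of times during $\overline{\mathcal{T}}_i^k$ (by \eqref{eq:M}), with $\zeta_i^k \equiv 1$ there; on each full period the time integral of $(\eta_i^k)^2\,\wt U_i^k(x - x_i^k(s))$ computes, via the change of variables $s\mapsto$ arclength and the ODE, to $(\eta_i^k)^2 r_{q+1}\eta_i^{k,-1}\cdot\dashint_0^{c_i\lambda_{q+1}}a_i^k(\cdot)\wt U_i^k(x - \cdot)\,ds$ — wait, more carefully: parametrizing by the spatial variable along the line, $ds = r_i^k(x_i^k(s))\eta_i^{k,-1}\,d(\text{arclength})$, so one period of the integral equals $\eta_i^k r_{q+1}\dashint_0^{c_i\lambda_{q+1}}a_i^k(x_i^k(t_0)+\sigma\xi_i)\,\wt U_i^k(x - x_i^k(t_0) - \sigma\xi_i)\,d\sigma \cdot c_i\lambda_{q+1}$, which by property \eqref{eqn:tildeu2} of $\wt U_i^k$ — designed to have trajectory-average one — together with the slow variation of $a_i^k$ (it varies on scale $\|a_i^k\|_{C^0}/\|a_i^k\|_{C^1}$, while $\wt U_i^k$ is concentrated at scale $\lambda_{q+1}^{-1}$) reproduces $a_i^k(x)$ up to an error controlled by $\|\nabla a_i^k\|_{L^\infty}\cdot\lambda_{q+1}^{-1}$. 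Summing over the $M_i^k$ periods and using $M_i^k T_i^k = \tau_{q+1}(1/4 - 2/\lambda_{q+1}) - O(T_i^k)$ reconstructs $\tau_{q+1}^{-1}\cdot\tau_{q+1}/4\cdot 4 = 1$ worth of $a_i^k$ up to relative errors of order $\lambda_{q+1}^{-1}$ (from the incomplete final period, from the region where $\zeta_i^k \ne 1$ whose measure is $O(\tau_{q+1}/\lambda_{q+1})$, and from the $\wt U_i^k$-smoothing). The choice of normalization \eqref{eqn:choiceetamagic} of $\eta_i^k$ is exactly what makes the leading constant come out to $1$. Collecting these discrepancies, $\|h_i^k - a_i^k\|$ in the relevant norm is $O(\|a_i^k\|_{C^1}/\lambda_{q+1})$, and since $\mathcal{R}_0$ is bounded $L^1\to L^1$ after one derivative is absorbed (or rather, $\mathcal{R}_0\div$ is bounded on $L^1$ in the relevant sense via Calderón--Zygmund on $L^{\bar p}$ and interpolation, as used elsewhere), we conclude \eqref{eq:Gik}. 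The delicate point is bookkeeping the several sources of error and confirming each is genuinely $O(\lambda_{q+1}^{-1})$ relative to $\|a_i^k\|_{C^1}$; the telescoping over periods and the two averaging identities for $\wt U_i^k$ are the structural facts that make it work.
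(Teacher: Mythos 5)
Your proposal follows the paper's proof essentially step for step: the same bound $\sup_t\big|\tfrac{d}{dt}(\eta_i^k\zeta_i^k(t) r_i^k(t))\big|\le C\|a_i^k\|_{C^1}$ (using \eqref{traj: ineq for cutoff} to absorb the $\zeta'$-term and $a_i^k\ge\delta_{q+1}$ for the $\nabla r_i^k$-term), multiplied by $\|\wt U_i^k\|_{L^p}$ and $\|D\wt U_i^k\|_{L^p}$ from \eqref{eq:est tildeU}, gives \eqref{eqn:estU}; the same integration by parts in time combined with the ODE \eqref{eq:traj1} yields $P_{\tau_{q+1}}U_{q+1}=\sum_i\div\big(h_i^k\,\xi_i\otimes\xi_i\big)$ with $h_i^k(x)=\tau_{q+1}^{-1}\int_{\mathcal T_i^k}(\eta_i^k\zeta_i^k)^2\,\wt U_i^k(x-x_i^k(s))\,ds$; and the same three error sources (the part of $\mathcal T_i^k$ where $\zeta_i^k\ne 1$, the freezing of $a_i^k$ at $x$ against the $\lambda_{q+1}^{-1}$-concentrated profile, and the mismatch between $M_i^kT_i^k$ and $\tau_{q+1}/4$), each of relative size $\lambda_{q+1}^{-1}$, with \eqref{eqn:tildeu2}, \eqref{eqn:choiceetamagic}, \eqref{eq:imp} and \eqref{eqn:MT-tau} producing the leading term $a_i^k(x)$. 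This is exactly the paper's error bookkeeping.

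The one step that does not hold up as written is your definition $G_i^k:=\mathcal R_0\big(\div(h_i^k\,\xi_i\otimes\xi_i)-\div(a_i^k\,\xi_i\otimes\xi_i)\big)$ together with the claim that $\mathcal R_0\div$ is ``bounded on $L^1$ in the relevant sense via Calder\'on--Zygmund on $L^{\bar p}$ and interpolation.'' $\mathcal R_0\div$ is a Calder\'on--Zygmund operator and is \emph{not} bounded on $L^1$ (there is no endpoint below $L^{\bar p}$ to interpolate with), and $h_i^k-a_i^k$ is not supported in a small ball, so Proposition \ref{prop:antidiv compact} cannot be used to recover the loss either; as written, \eqref{eq:Gik} does not follow from your definition. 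The fix is simply to drop the anti-divergence: since $\xi_i\otimes\xi_i$ is symmetric, set $G_i^k:=(h_i^k-a_i^k)\,\xi_i\otimes\xi_i$, which is a smooth symmetric tensor satisfying \eqref{eqn:ptauu} by your own computation, and then \eqref{eq:Gik} is exactly your $L^1$ estimate on $h_i^k-a_i^k$; this is what the paper does, taking $G_i^k=\tau_{q+1}^{-1}\big(II+II'+E\,a_i^k\big)\xi_i\otimes\xi_i$. With this modification your argument is complete and coincides with the paper's.
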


{
In other words, \eqref{eqn:ptauu} shows that with only a small error term $G_i^k$, $P_{\tau_{q+1}} U_{q+1}(x,t)$ exactly matches $a_i^k(x) \xi_i \otimes \xi_i$.
In light of \eqref{eq:Gik}, we make the following extra assumption on the parameters, which in particular implies that the errors $G^k_i$ is suitably  small
\begin{equation}
    \label{ineq-lambda}
    \lambda_{q+1}^{-\frac 9 {10}}\lambda_q^{3n+3\beta\sigma} \leq \delta_{q+2} \, .
\end{equation}
}

\begin{proof}[Proof of Proposition \ref{prop:auxiliary}]

From the definition \eqref{eq:U} of $U_{q+1}$ and from \eqref{eq:est tildeU}, we deduce that for every $t\in \mathcal T^k_i$
\begin{align}\label{est-u-proof}
     \norm{U_{q+1}(t)}_{L^p_x} + \lambda_{q+1}^{-1} \norm{D U_{q+1}(t)}_{ L^p_x}
 & \le \sup_{s\in \mathcal T^k_i} \Big| \frac{d}{ds}(\eta_i^k \zeta_i^k(s) r_i^k(s))\Big|
    \Big(\norm{\wt{U}_i^k}_{L^p_x} + \lambda_{q+1}^{-1} \norm{D \wt{U}_i^k}_{ L^p_x} \Big) \nonumber
\\&
 \le C(p) \lambda_{q+1}^{2 - \frac{2}{p}} \sup_{s\in \mathcal T^k_i} \Big| \frac{d}{ds}(\eta_i^k \zeta_i^k(s) r_i^k(s))\Big|
.
\end{align}
Using the definitions of $r^k_i$ and $\eta^k_i$ from (\ref{def: rki}) and (\ref{eqn:choiceetamagic}) respectively and from the estimate on the derivatives of the cutoff $\zeta_i^k$ in \eqref{eqn:zeta-est}, we upper bound the quantity inside the supremum in \eqref{est-u-proof} for every $s\in \mathcal T^k$ as 
\begin{align}
\Big| \frac{d}{ds}(\eta_i^k \zeta_i^k(s) r_i^k(s))\Big| 
&
\leq \Big| \eta_i^k r_i^k(s) \frac{d \zeta_i^k}{ds}\Big| + \Big| (\eta^k_i \zeta^k_i)^2 \frac{\nabla r^k_i}{r^k_i} \Big| 
\\&\leq 
C \delta_{q+1}^{\frac{1}{2}} r_{q+1} \norm{a^k_i}_{L^\infty_x} \, \frac{\lambda_{q+1}}{\tau_{q+1}}
+
C \delta_{q+1} \norm{\frac{\nabla a_i^k}{a_i^k}}_{L^\infty_x}
\\& \leq 
 C \norm{a^k_i}_{ C^1_x} \, .
\label{corrector: an int est}
\end{align}
{
To obtain the last inequality we used the fact that $a_i^k \ge \delta_{q+1}$ on the second term and we used the assumption \eqref{traj: ineq for cutoff} to control the first term.
}
This estimate together with \eqref{est-u-proof} yields \eqref{eqn:estU}.

To show \eqref{eqn:ptauu}, we actually prove that 
   \begin{align}
        \frac{1}{\tau_{q+1}}\int_{\mathcal{T}_i^k}U_{q+1}(x,t)\, dt
        = \div\left( a_i^k(x) \xi_i \otimes \xi_i
        + G_i^k(x)\right) \, ,
        \quad x\in \mathbb{T}^2 \, ,
    \end{align}
    and then sum over $i=1,2,3,4$.
We compute the time average of $U_{q+1}(x,t)$ using integration by parts as follows.
\begin{align}
\int_{\mathcal{T}_i^k}U_{q+1}(x,t)\, dt & = \int_{\mathcal{T}_i^k}\frac{d}{dt} \left( \eta_i^k \zeta_i^k(t) r_i^k(t) \right) \wt{U}_i^k(x - x_i^k(t)) \xi_i\, dt  \nonumber \\
& = - \int_{\mathcal{T}_i^k}  \eta_i^k \zeta_i^k(t) r_i^k(t)  \frac{d}{dt} \left( \wt{U}_i^k(x - x_i^k(t)) \right) \, \xi_i\, dt \nonumber \\
& = \int_{\mathcal{T}_i^k}  \left( \eta_i^k \zeta_i^k(t) \right)^2  \div \left( \wt{U}_i^k(x - x_i^k(t)) \xi_i \otimes \xi_i \right)\, dt \nonumber \\
& = \div \left( \xi_i \otimes \xi_i\int_{\mathcal{T}_i^k}  \left( \eta_i^k \zeta_i^k(t) \right)^2   \wt{U}_i^k(x - x_i^k(t)) \, dt \right)\, .
\label{aux bb: int U}
\end{align}

Recall the definitions of $t_0$, $T_i^k$ and $M_i^k$ from Section \ref{sub:traj cent}. We set $\wt{\mathcal{T}}_i^k = [t_0, t_0 + M_i^k T_i^k]$. From definition (\ref{iteration: short tauik}), we note that
$\wt{\mathcal{T}}_i^k \subseteq \ol{\mathcal{T}}_i^k$ and therefore 
$\zeta_i^k(t)=1$ for $t\in \wt{\mathcal{T}}_i^k$. Next, we write
\begin{align}
\int_{\mathcal{T}_i^k}&  \left( \eta_i^k \zeta_i^k(t) \right)^2   \wt{U}_i^k(x - x_i^k(t)) \, dt \nonumber \\
& =\big(\eta_i^k \big)^2\int_{\wt{\mathcal{T}}_i^k} \wt{U}_i^k(x - x_i^k(t))  \, dt
+ \big(\eta_i^k \big)^2
\int_{\mathcal{T}_i^k\setminus \wt{\mathcal{T}}_i^k}  \big( \eta_i^k \zeta_i^k(t) \big)^2  
\wt{U}_i^k(x - x_i^k(t)) \, dt \eqqcolon I + II \, .
\label{aux bb: I + II}
\end{align}
The term $II$, multiplied by $\xi_i \otimes \xi_i$, will be part of the error $G_i^k$. By \eqref{eq:est tildeU}, the $L^1$ estimate on $II$ is given by
\begin{equation}
    \| II \|_{L^1} 
    \le 
    (\eta_i^k)^2 \| \wt{U}_i^k \|_{L^1} \int_{\mathcal{T}_i^k\setminus \wt{\mathcal{T}}_i^k}
    (\zeta_i^k(t))^2\, dt
    \le 
    C \delta_{q+1} \tau_{q+1} \lambda_{q+1}^{-1} \, .
\label{aux bb: L1 est II}
\end{equation}
Next, we investigate the main term $I$.
We perform a change of variables $t \to t(s) \in \wt{\mathcal{T}}_i^k$ such that $x_i^k(t(s)) = x_i^k(t_0) + s \xi_i$, where $t(0)=t_0$. We note that
\begin{equation}
    \frac{d}{ds} t(s) = \frac{r_{q+1}}{\eta_i^k \zeta_i^k(t(s))} a_i^k(x_i^k(t_0) + s \xi_i)\, .
\end{equation}
Employing the change of variables, we compute the term $I$ as follows:
\begin{align}
I & = \eta_i^k r_{q+1} \int_0^{c_i M_i^k \lambda_{q+1}} (a_i^k \wt{U}_i^k)(x - (x_i^k(t_0) + s\xi_i)) \,  ds \nonumber \\
& =\eta_i^k r_{q+1} a_i^k(x) \int_0^{c_i M_i^k \lambda_{q+1}} \wt{U}_i^k(x - (x_i^k(t_0) + s\xi_i)) \,  ds \nonumber \\
& + \eta_i^k r_{q+1} \int_0^{c_i M_i^k \lambda_{q+1}} (a_i^k(x - (x_i^k(t_0) + s\xi_i) - a_i^k(x)) \wt{U}_i^k(x - (x_i^k(t_0) + s\xi_i)) \,  ds \nonumber \\
& \eqqcolon I' + II'
\label{aux bb: I + II prime}
\end{align}
where $I'$ is the main term and $II''$ will be part of the error $G^k_i$. By \eqref{eqn:tildeu2}, We rewrite the main term  
\begin{align}
I' & = \eta_i^k r_{q+1} a_i^k(x) \int_0^{c_i M_i^k \lambda_{q+1}} \wt{U}_i^k(x - (x_i^k(t_0) + s\xi_i)) \, ds, = \eta_i^k r_{q+1} c_i M_i^k \lambda_{q+1} a_i^k(x) = (\tau_{q+1} + E) a_i^k(x),
\label{aux bb: def I prime}
\end{align}
where using \eqref{eqn:MT-tau} and the formula and the estimate for the period in \eqref{eq:imp}, 
the error $E$ is estimated by
\begin{align}
&  |E| \leq 2\frac{\tau_{q+1}}{\lambda_{q+1}} + \frac{c_i\lambda_{q+1} r_{q+1}\eta_i^k}{4}  \leq C \frac{\tau_{q+1}}{\lambda_{q+1}}  + C \delta_{q+1}^{1/2} r_{q+1} \lambda_{q+1} \leq C \frac{\tau_{q+1}}{\lambda_{q+1}}.
\label{aux bb: est E}
\end{align}
Now, we estimate the term $II^\prime$ from (\ref{aux bb: I + II prime}) as follows
\begin{align}
    \| II' \|_{L^1} \le C \eta_i^k r_{q+1} c_i M_i^k \lambda_{q+1}  \frac{\| a_i^k \|_{C^1}}{\lambda_{q+1}} \leq C \left(\tau_{q+1} + E\right)\frac{\| a_i^k \|_{C^1}}{\lambda_{q+1}} \leq C \tau_{q+1}\frac{\| a_i^k \|_{C^1}}{\lambda_{q+1}}.
\label{aux bb: L1 est II prime}
\end{align}
Finally, combining (\ref{aux bb: int U}), (\ref{aux bb: I + II}), (\ref{aux bb: I + II prime}) and (\ref{aux bb: def I prime}), we obtain
\begin{align}
\frac{1}{\tau_{q+1}}\int_{\mathcal{T}_i^k}U_{q+1}(x,t)\, dt = \div\left( a_i^k(x) \xi_i \otimes \xi_i
        + G_i^k(x)\right), \quad \text{where} \quad G_i^k = \frac{1}{\tau_{q+1}} \left(II + II^\prime + E \, a_i^k(x)\right)  \xi_i \otimes \xi_i .
\end{align}
Combining the estimates (\ref{aux bb: L1 est II}), (\ref{aux bb: est E}), and (\ref{aux bb: L1 est II prime}), we obtain
\begin{align}
\norm{G^k_i}_{L^1} \leq C \frac{\delta_{q+1}}{\lambda_{q+1}} + C \frac{\| a_i^k \|_{C^1}}{\lambda_{q+1}} + C \frac{\| a_i^k \|_{L^1}}{\lambda_{q+1}} \leq C \frac{\| a_i^k \|_{C^1}}{\lambda_{q+1}}. 
\end{align}
\end{proof}

\subsection{The Time Corrector}\label{sec:timecorr}
As stated earlier, in our construction, it is the time average of $U_{q+1}$, which we called $P_{\tau_{q+1}} U_{q+1}$, that cancels the error. Therefore, to compensate for the difference $U_{q+1} - P_{\tau_{q+1}} U_{q+1}$, we define a time corrector, a method used in other contexts (see, for example, \cite{BV19, cheskidov2022sharp}). We define our time corrector $Q_{q+1}: \mathbb{T}^2 \times [0, 1] \to \mathbb{R}^2$ as 
\begin{align}\label{eq:Q}
    - Q_{q+1}(x,t) \coloneqq \mathbb{P}\left( \int_0^t (U_{q+1}(x,s) - P_{\tau_{q+1}} U_{q+1}(x,s))\, ds 
    \right) \, ,
\end{align}
where $P_{\tau_{q+1}}$ is as defined in (\ref{eq:Ptau}). Let $t \in \mathcal{T}^k$ for some $k \in \mathbb{N}$. The crucial observation is that the integral of $U_{q+1}(x,s) - P_{\tau_{q+1}} U_{q+1}(x,s)$ vanishes when computed on any time interval of the form $[k\tau_{q+1}, (k+1)\tau_{q+1}]$, $k\in \N$, by definition of $\tau_{q+1}$-average. Hence, length of interval contributing towards the integral in \eqref{eq:Q} is of length less than $\tau_{q+1}$. 
\begin{equation}\label{eq:Qid2}
    -Q_{q+1}(x,t) = \mathbb P \int_{k \tau_{q+1}}^t \left(U_{q+1}(x,s) - \dashint_{\mathcal{T}^k} U_{q+1}(x, s')\, ds' \right)\, ds. \,
\end{equation}

We easily estimate the norms of $Q_{q+1}$ in terms of the norms of $U_{q+1}$, which were computed in \eqref{eqn:estU}. By \eqref{eq:Qid2} and the Calderon--Zygmund estimates applied to $\mathbb P$, we get
\begin{align}\label{eqn:Qest}
    \norm{Q_{q+1}}_{L^\infty_t L^p_x} + \lambda_{q+1}^{-1} \norm{D Q_{q+1}}_{L^\infty_t L^p_x} &\le C\int_{k \tau_{q+1}}^t ( \norm{U_{q+1}}_{L^\infty_t L^p_x} +  \lambda_{q+1}^{-1} \norm{DU_{q+1}}_{L^\infty_t L^p_x}) \nonumber
\\&    \le C\tau_{q+1} (
    \norm{U_{q+1}}_{L^\infty_t L^p_x} +  \lambda_{q+1}^{-1} \norm{DU_{q+1}}_{L^\infty_t L^p_x})
    \end{align}
 for any $p \in (1, \infty)$ and any $t\in \mathcal{T}^k$. From (\ref{eq:Q}), we also see that
\begin{align}
\norm{\partial_t Q_{q+1}}_{L^\infty_t L^p_x}  =\norm{ - \mathbb{P}U_{q+1}(x,t) +  \dashint_{\mathcal{T}^k} \mathbb{P} U_{q+1}(x,s)\, ds \, }_{L^\infty_t L^p_x}     \leq 2 \norm{\mathbb{P}U_{q+1}}_{L^\infty_t L^p_x}  \leq 2 \norm{U_{q+1}}_{L^\infty_t L^p_x} 
,
\label{corrector: part Q Lp no space deriv}
\end{align}
and analogously
\begin{align}
\norm{\partial_t DQ_{q+1}}_{L^\infty_t L^p_x}   \leq 2 \norm{D\mathbb{P}U_{q+1}}_{L^\infty_t L^p_x}  \leq 2 \norm{DU_{q+1}}_{L^\infty_t L^p_x} 
.
\label{corrector: part Q Lp}
\end{align}

\subsection{The Perturbation and the New Reynolds Stress
}
\label{sec:velocity perturbation}
We define the velocity field $u_{q+1}$ as
\begin{equation}
    u_{q+1}(x,t) \coloneqq u_\ell(x,t) + v_{q+1}(x,t) + Q_{q+1}(x,t) \, ,
    \quad\quad 
    x\in \mathbb{T}^2\, , \, \, t\ge 0\, .
\label{def: uq+1}
\end{equation}
where the velocity field $v_{q+1}$ is the main perturbation and is given by
\begin{equation}
    v_{q+1}(x,t) = \sum_{k\in \N} \sum_{i=1}^4 V_i^k(x,t) \,.
\end{equation}
The velocity field $V^k_i$ and $Q_{q+1}$ are the adapted building block and time corrector from the previous section. 

{
\begin{remark}\label{rmk:control ending points}
    For every $k\in \mathbb{Z}$, it follows that $v_{q+1}(x, k \tau_{q+1}) = Q_{q+1}(x,k \tau_{q+1}) = 0$ due to the time cut-off in the definition of $V_i^k$ and \eqref{eq:Qid2}. This implies that, 
    \begin{equation}
        u_{q+1}(x,k\tau_{q+1}) = u_\ell(x,k\tau_{q+1}) \, ,
        \quad \text{for every $k\in \mathbb{Z}$ and $x\in \mathbb{T}^2$}\, .
    \end{equation}
    Since $\tau_{q+1}^{-1}$ is integer, by \eqref{uell-u} for every $k\in \mathbb{Z}$ we conclude that
    \begin{equation}
        \norm{u_{q+1}(\cdot, k) - u_q(\cdot, k)}_{L^2} 
        \le \| u_{q} - u_\ell \|_{L^\infty_t L^2_x}
        \le C  \lambda_1^{\beta}\lambda_0^{-\beta/2} \lambda_{q+1}^{-\beta}
    \end{equation}
\end{remark}
}

We define the new pressure field as
\begin{align}
p_{q+1} \coloneqq p_{\ell} + P^{\, d} + \sum_{k\in \N} \sum_{i=1}^4 P_i^k,
\end{align}
where $p_\ell$, $P^{\, d}$ and $P_i^k$ are from (\ref{mollified eqn}), (\ref{eq:error dec}) and (\ref{iteration: BB Vki eqn}), respectively. The velocity field $u_{q+1}$ and the pressure $p_{q+1}$ satisfies the Euler--Reynolds system with the error term given by 
\begin{equation}\label{eq:Rq+1 dec}
    R_{q+1} \coloneqq 
    R_{q+1}^{(l)} 
    +
    R_{q+1}^{(c)}
    +
    R_{q+1}^{(t)}
    +
    R_{q+1}^{(s)}
    +
    \sum_{k \in \mathbb{N}} \; \sum_{i= 1}^{4} (F_{i}^k + G_i^k)
\end{equation}
where the error $F_i^k$ is from (\ref{iteration: BB Vki eqn}) and $G_i^k$ is given by Proposition \ref{prop:auxiliary}. The error $R^{(l)}_{q+1}$ represents the terms that are linear in the perturbation and $R_{q+1}^{(c)}$ contains the terms involving the time corrector $Q_{q+1}$:
\begin{align}
R_{q+1}^{(l)} 
& \coloneqq v_{q+1} \otimes u_\ell + u_\ell \otimes v_{q+1} \, , \\
R_{q+1}^{(c)} & \coloneqq  Q_{q+1} \otimes (u_\ell + v_{q+1}) + (u_\ell + v_{q+1}) \otimes Q_{q+1}  
+ Q_{q+1} \otimes Q_{q+1} \, ,       
\end{align}
Finally, for every $t\in \mathcal{T}_i^k$ we define the error $R_{q+1}^{(t)}$ coming from freezing the coefficients in time and the error $R_{q+1}^{(s)} $ coming from replacing the source term with the auxiliary building block
\begin{align}
R_{q+1}^{(t)}(x,t) & \coloneqq a_i(x,t) -  a_i^k(x)
 \\
R_{q+1}^{(s)}(x,t) & \coloneqq \mathcal{R}_0\left( (S_i^k- \wt U_i^k) \frac{d}{dt}(\eta_i^k \zeta_i^k r_i^k ) \right)(x,t) .
\end{align}

 \section{Estimates on the Perturbation and on the Reynolds Stress}\label{sec:est-pert-rey}
The goal of this section is to provide a proof of Proposition \ref{prop:iteration}. Essentially, given a solution $(u_{q}, p_{q}, R_{q})$ of the Euler-Reynolds system at the $q$th stage, our aim is to construct a solution $(u_{q+1}, p_{q+1}, R_{q+1})$ at the $(q+1)$th stage that satisfies the conditions stated in the proposition. To that end, we express the estimates on the velocity and the error and express these estimates as powers of $\lambda_{q+1}$, where the exponents are determined by the constants $\beta$, $\mu$, $\kappa$, $\sigma$, and $n$ from Section \ref{sec: iteration}. Finally, we ascertain the values of these constants to establish a proof of the Proposition \ref{prop:iteration}.

\subsection{Estimate on the Velocity Field}
We begin with estimating the $L^2$ norm of $u_{q+1} - u_q$. From the definition of $u_{q+1}$ given in (\ref{def: uq+1}) and estimates \eqref{eq:est_aik},(\ref{iteration: L2 Vki}), \eqref{eqn:estU}, (\ref{eqn:Qest}),  we obtain
\begin{align}
\| u_{q+1} -u_q\|_{L^\infty_t L^2_x}
& \le \| u_\ell -u_q\|_{L^\infty_tL^2_x} + \sup_{i,k}\| V_i^k \|_{L^\infty_t L^2_x} + \| Q_{q+1}\|_{L^\infty_t L^2_x} \nonumber \\
& \le 
 C\delta_{q+1}^{1/2} + C \tau_{q+1} \lambda_{q+1}\lambda_q^{3n+3\beta\sigma}
\nonumber \\
& \le  C\delta_{q+1}^{1/2}, 
\label{eq:uq+1 estL2}
\end{align}
where we impose the more restrictive condition
\begin{equation}
    \label{eqn:condQ}
   \lambda_{q+1}^{-\kappa+1+\frac{4n}{\sigma}+3\beta+ \beta \sigma}\leq  1
    \quad\Rightarrow \quad \delta_0^{1/2}\tau_{q+1} \lambda_{q+1}\lambda_q^{4n+3\beta\sigma} \leq \delta_{q+2}.
\end{equation}


The second inductive assumption follows from the previous computation
\begin{equation*}
\| u_{q+1} \|_{L^\infty_t L^2_x} \leq \| u_q \|_{L^\infty_t L^2_x}+ \|u_{q+1} - u_q\|_{L^\infty_tL^2_x}  \leq  2 \delta_0^{1/2}- \delta_{q}^{1/2}+  M \delta_{q+1}^{1/2} \leq  2 \delta_0^{1/2}- \delta_{q+1}^{1/2}
\end{equation*}
provided $\lambda_0$ is sufficiently large in terms of $M$.

Now we obtain the $L^p$ estimate on $D u_{q+1}$. From \eqref{eq:est_aik}, (\ref{iteration: Lp DVki}), \eqref{eqn:estU} and (\ref{eqn:Qest}) for any $p\in (1,\infty)$, we get
\begin{align}
& \| D u_{q+1}\|_{L^\infty_t L^p_x}  
 \le \| D u_\ell \|_{L^\infty_t L^p_x} + \sup_{i,k} \| D V_i^k \|_{L^\infty_t L^p_x} + \| D Q_{q+1} \|_{L^\infty_t L^p_x}
  \nonumber \\
 & \le \| D u_q \|_{L^\infty_t L^p_x} + C(p) \delta_{q+1}^{\frac{1}{2}} \left(r_{q+1} \delta_{q+1}\right) ^{\frac{2}{p} - 2} 
 + C(p)  \tau_{q+1} \lambda_{q+1}^{3-\frac{2}{p} } 
\lambda_q^{3n+3\beta \sigma}
 \nonumber \\
 & \le \| D u_q \|_{L^\infty_t L^p_x} + C(p)\delta_{q+1}^{1/10} \lambda_1^{ \beta} \left(  \lambda_{q+1}^{-\beta \left(-\frac{8}{5} + \frac{2}{p}\right) + \mu  \left(2 - \frac{2}{p}\right) }
 +
 \lambda_{q+1}^{\frac{3n}{\sigma} + 5 \beta -\kappa + 3 - \frac{2}{p} }
  \right)\, .
\label{error: Duq+1 estLp}
\end{align}
We will impose \eqref{z10}
so that $ \| D u_{q+1}\|_{L^\infty_t L^{\overline{p}}_x}
 \le  \| D u_{q}\|_{L^\infty_t L^{\overline{p}}_x} + \lambda_1^{\beta} \delta_{q+1}^{1/10}$, which then satisfies item (ii) in Proposition \ref{prop:iteration} as we will ensure $\beta \sigma < 1$. 

By using a simple interpolation inequality, estimates from \eqref{error: Duq+1 estLp} together with 
\eqref{eq:est_aik}, 
\eqref{iteration: Lp part Vki}, \eqref{eqn:estU} and (\ref{corrector: part Q Lp}),  we obtain
\begin{align}
\|&D u_{q+1}\|_{C^\alpha_t L^{\overline{p}}_x} \nonumber 
\\&\leq \|  D u_q\|_{C^\alpha_t L^{\overline{p}}_x} +  \sup_{i,k} \|  D V_i^k\|_{C^\alpha_t L^{\overline{p}}_x} +\| DQ_{q+1}\|_{C^\alpha_t L^{\overline{p}}_x} \nonumber 
\\&
\leq  \|  D u_q\|_{C^\alpha_t L^{\overline{p}}_x} +  \sup_{i,k} \|  D V_i^k\|_{L^\infty_t L^{\overline{p}}_x}^{1-\alpha} \|  \partial_t D V_i^k\|_{L^\infty_t L^{\overline{p}}_x}^{\alpha} +\| DQ_{q+1}\|_{L^\infty_t L^{\overline{p}}_x}^{1-\alpha}  \| \partial_t DQ_{q+1}\|_{L^\infty_t L^{\overline{p}}_x}^{\alpha} \nonumber
\\&
\leq \|  D u_q\|_{C^\alpha_t L^{\overline{p}}_x} + 
(C\delta_{q+1}^{\frac{1}{2}} \left(r_{q+1} \delta_{q+1}\right) ^{\frac{2}{{\overline{p}}} - 2} )^{1-\alpha} (C\delta_{q+1}^{-3}r_{q+1}^{-4})^{\alpha} 
+
C
\tau_{q+1}^{1-\alpha}  (\lambda_{q+1}^{3-\frac{2}{{\overline{p}}} } 
\lambda_q^{3n+3\beta}) \nonumber
\\&
\leq \|  D u_q\|_{C^\alpha_t L^{\overline{p}}_x} + C \delta_{q+1}^{1/100}\delta_{q+1}^{9/100}\Big( (\delta_{q+1}^{\frac{1}{2}} \left(r_{q+1} \delta_{q+1}\right) ^{\frac{2}{{\overline{p}}} - 2} )^{-\alpha} (\delta_{q+1}^{-3} r_{q+1}^{-4})^\alpha +\tau_{q+1}^{-\alpha}
\Big) \nonumber \\
&
\leq \|  D u_q\|_{C^\alpha_t L^{\overline{p}}_x} + C \delta_{q+1}^{1/100} \left[\delta_{q+1}^{9/100}\Big(
 \delta_{q+1}^{-\frac{2}{\overline{p}} \alpha - \frac{3}{2} \alpha} r_{q+1}^{-5 \alpha}
+ \tau_{q+1}^{-\alpha} \Big) \right]
\label{holder est in time}
\end{align}
The factor multiplying $\delta_{q+1}^{1/100}$ in (\ref{holder est in time}) is bounded by $1$  provided $\alpha$ is chosen sufficiently close to $0$ in terms of the parameters.

Finally, we obtain $L^\infty_t L^p_x$ estimate on $\partial_t u_{q+1}$. We note from \eqref{eq:est_aik}, (\ref{iteration: Lp part Vki}),  \eqref{eqn:estU}, \eqref{corrector: part Q Lp no space deriv} that
\begin{align}
 \| \partial_t u_{q+1}\|_{L^\infty_t L^p_x}  
& \le \| \partial_t u_\ell \|_{L^\infty_t L^p_x} + \sup_{i,k} \| \partial_t V_i^k \|_{L^\infty_t L^p_x} + \| \partial_t Q_{q+1} \|_{L^\infty_t L^p_x} \nonumber \\
& \leq \lambda_{q+1}^{\frac{n}{\sigma}} + C \delta_{q+1}^{-2} r_{q+1}^{-3} +C \lambda_{q+1}^{2-\frac{2}{p} } 
\lambda_q^{3n+3\beta \sigma} \nonumber \\
& \leq \lambda_{q+1}^{\frac{n}{\sigma}} + C \lambda_{q+1}^{2\beta + 3 \mu}  + C \lambda_{q+1}^{\frac{3n}{\sigma} + 3 \beta + 2-\frac{2}{p} }. 
\label{error: part t uq+1 estLp}
\end{align}

\subsection{Estimate on the Error}
\label{subsec:estimate error}
From  \eqref{eq:iterate1} and \eqref{iteration: L32 Vki}, we deduce
\begin{align}
\| R_{q+1}^{(l)}(\cdot, t)\|_{L^\infty_t L_x^1} 
& \le C \| v_{q+1}\|_{L^\infty_t L_x^{3/2}} \| u_\ell\|_{L^\infty_t L_x^3}
\leq C (\sup_{i, k} \| V^k_i \|_{L^\infty_t L^{3/2}_x} +\| Q_{q+1}\|_{L^\infty_t L^2_x} ) \, \| u_\ell\|_{L^\infty_t L_x^4}
\\&
\leq C \delta_{q+1}^{\frac{1}{2}} r_{q+1}^{\frac{1}{3}}\left(\lambda_q^{2n+2 \beta \sigma}\right)
^{\frac{1}{3}}\lambda_q^n+C \tau_{q+1} \lambda_{q+1}\lambda_q^{4n+3\beta\sigma} \delta_0^{1/2}\leq \frac{1}{10} \delta_{q+2}
,
\label{error: linear error}
\end{align}
where in the last inequality we used the condition on the parameters \eqref{iteration: rki small lambda}, \eqref{eqn:condQ} and $\lambda_{q+1} \leq \lambda_q^n$ which follows from \eqref{ineq-lambda}. Here as well, one can consume the constants by choosing $\lambda_0$ large. From \eqref{eq:est_aik}, \eqref{eqn:estU}, (\ref{eqn:Qest}), \eqref{eqn:condQ}, it follows:
\begin{align}
\| R_{q+1}^{(c)} \|_{L^\infty_t L^1_x}
&\le C \| Q_{q+1} \|_{L^\infty_t L_x^2} (  \| u_\ell \|_{L^\infty_t L^2_x} +  \| v_{q+1} \|_{L^\infty_t L^2_x} + \| Q_{q+1} \|_{L^\infty_t L^2_x}) \nonumber  \\
& \le C \tau_{q+1} \lambda_{q+1}\lambda_q^{3n+3\beta\sigma} \delta_0^{1/2}\leq \frac{1}{10}  \delta_{q+2}.
\label{error: corrector error}
\end{align}

From \eqref{eq:a-aik}, we deduce that
\begin{equation}\label{eq:Rt}
    \| R_{q+1}^{(t)}\|_{L^\infty_t L^1_x} 
    \le \tau_{q+1} 
    \| a_i \|_{C^1_{x,t}}
    \le C \tau_{q+1}  \lambda^{3n+3 \beta \sigma}_q
    \le  \frac{1}{10}  \delta_{q+2}, 
\end{equation}
where in the last inequality we used the condition on the parameters \eqref{eqn:condQ}. 
Moreover, from Proposition~\ref{prop:antidiv compact}, \eqref{est:ki}, \eqref{eq:est tildeU} (notice that the estimate for the $L^\infty_t L^p_x$ of $S_i^k$ is worse than that of $\tilde U_i^k$, as expected since they are both normalized in $L^1$ but the former is more concentrated) and \eqref{corrector: an int est}, for every $p\in (1,\infty)$, we see that
\begin{align}
&\norm{R_{q+1}^{(s)}}_{L^\infty_t L^p_x} \leq C(p) \lambda_{q+1}^{-1}
     \left( \| S_i^k\|_{L^\infty_t L^p_x} + \|\tilde U_i^k\|_{L^\infty_tL^p_x}\right) 
     \sup_{i,k}\Big| \frac{d}{dt}(\eta_i^k \zeta_i^k r_i^k(x_i^k))\Big|
     \\&
     \leq C(p) \lambda_{q+1}^{-1}\delta_{q+1}^{\frac{2}{p} - 2} r_{q+1}^{{\frac{2}{p} - 2}} \lambda^{3n+3 \beta \sigma}_q \,   \leq  \frac{\delta_{q+2}}{10} \left(C(p)\lambda_1^{- 4 \beta + \frac{4 \beta}{p}}\lambda_{q+1}^{-\frac 1 {10}+ \big(2-\frac{2}{p}\big)(\beta+\mu)} \right)
\label{error: source error}
\end{align}
and the factor multiplying $\delta_{q+2}$ is bounded by $\frac{1}{10}$ provided $p$ is chosen sufficiently close to $1$, and $\lambda_0$ is big enough.

Finally by (\ref{error: Fki}), (\ref{eq:Gik}), (\ref{iteration: rki small lambda}) and \eqref{ineq-lambda} we get
\begin{align}
&\norm{F_{i}^k}_{L^\infty_t L^1_x} \leq
C \delta_{q+1} \, r_{q+1}^{\frac{1}{2}} \, \lambda_q^{4 n+5 \beta \sigma} 
\leq \frac{1}{10}  \delta_{q+2},
\\
& \norm{G_i^k}_{L^\infty_t L^1_x} \leq \lambda_{q+1}^{-1} \lambda^{3n+3 \beta \sigma}_q
\leq \frac{1}{10}  \delta_{q+2} \, .
\end{align}


\subsection{Constraints on the Parameters}
\label{sec:parameters}
Need to take care of extra conditions now.
The following constraints have been imposed on the parameters, and are in turn implied by the inequalities below:
\begin{enumerate}[label = (\roman*)]
\item We satisfy the extra conditions (\ref{iteration: rki small lambda}) and (\ref{traj: ineq for cutoff}), 
\eqref{ineq-lambda}, \eqref{eqn:condQ}, (\ref{extra cond. 5}):
\begin{align}
5+ \frac{2n}{\sigma}+2 \beta-\mu < 0\, ,
\qquad 
\mu -\beta - 2 - \kappa > 0 \, , \qquad \qquad \qquad \qquad \\ 
 -\frac{9}{10} + \frac{3n}{\sigma} + 3 \beta + \beta \sigma < 0\, , 
\qquad  {-\kappa+1+\frac{4n}{\sigma}+3\beta+ \beta \sigma} < 0\, , \qquad -5 + \frac{n}{\sigma} + \beta < 0.
\end{align}

\item The terms other than $\norm{D u_q}_{L^\infty_t L^{\bar p}_x}$ in (\ref{error: Duq+1 estLp}) are smaller than $\delta_{q+1}^{\frac{1}{10}}$:
\begin{align}\label{z10}
-\beta \left(-\frac{8}{5} + \frac{2}{p}\right) + \mu  \left(2 - \frac{2}{\bar p}\right) < 0,
\qquad
{\frac{3n}{\sigma} + \frac{9 \beta}{10} -\kappa + 3 - \frac{2}{\bar p} } < 0.
\end{align}

\item Finally, $\norm{D_{x, t} u_{q+1}}_{L^\infty_t L^4_x}$ is smaller than $\lambda_{q+1}^n$ using \eqref{error: Duq+1 estLp} and \eqref{error: part t uq+1 estLp} for $p=4$:
\begin{align}
\frac{21 \beta}{10} + \frac{3}{2} \mu  < n,
\qquad
{\frac{3n}{\sigma} + 6 \beta -\kappa + \frac{5}{2} } < n,
\qquad
2\beta + 3 \mu < n,
\qquad
\frac{3n}{\sigma} + 3 \beta +  \frac{3}{2} < n \, .
\end{align}
\end{enumerate}
These conditions are satisfied, for example, when
\begin{align}
\beta = \frac{1}{245}, \quad \mu = \frac{53}{10}, \quad \kappa = 3, \quad n = 16, \quad \sigma = 110, \quad \bar p = 1 + \frac{1}{6500}.
\end{align}
Once this choice is made, we see that $\alpha = \frac{1}{10^5}$ satisfies \eqref{error: source error}.
\begin{remark}[Improved exponent]
It is likely that the exponent in our scheme can be increased, for instance, to $\ol{p} = 1 + \frac{1}{2000}$. Two changes required to get this exponent are as follows. Firstly, one needs to improve the estimate on the support of the source/sink term in Proposition \ref{prop:build-block1} by choosing $\alpha = 1/3$ in the proof. Secondly, in equation (\ref{traj: ineq for cutoff}), one should instead impose $\lambda_{q+1}^{3/2} r_{q+1} \delta_{q+1}^{1/2} \le \frac{\tau_{q+1}}{200}$.
\end{remark}

\appendix

\begin{center}
{\bf APPENDIX}
\end{center}

\section{Anti-Divergence Operator}
\label{appendix:antidiv}

\subsection{Bogovskii operator}
Let us fix a cut-off function $\gamma\in C^\infty_c(\R^2)$ satisfying
\begin{itemize}
    \item[(i)] $\supp \gamma \subset B_1(0)$,

    \item[(ii)]  $\int_{\R^2} \gamma(x) \, dx =1$.
\end{itemize}
For every $f\in L^1(\R^2)$ with $\int_{\R^2} f(x)\, dx =0$, we define the Bogovskii operator
\begin{equation}
    \mathcal{B}(f)(x) := \int_{\R^2} f(y) (x-y) \left(\int_1^\infty \gamma(y + \rho(x-y)) \rho \, d \rho \right) \,dy
\end{equation}
The following Lemma is well-known (see \cite{galdi2011introduction}).

\begin{lemma}\label{lemma:BogovskiiI}
    Assume that $f\in C^\infty_c(\R^2)$ is supported on $B_1(0)$ and $\int_{\R^2} f(x)\, dx =0$. Then, $\mathcal{B}(f)\in C^\infty_c(\R^2; \R^2)$ is supported on $B_1(0)$ and satisfies
    \begin{itemize}
        \item[(a)] $\div(\mathcal{B}(f))=f$.

        \item[(b)] For every $p\in (1,\infty)$, it holds
        \begin{equation}
        \|\mathcal{B}(f)\|_{L^p}
        \le C(p) 
        \| f \|_{L^p} \, . 
        \end{equation}
    \end{itemize}
\end{lemma}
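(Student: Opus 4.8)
The result is classical, and I would reproduce the standard argument (see \cite{galdi2011introduction}), specialized to $\mathbb R^2$ and $B_1(0)$.

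\emph{A kernel representation, smoothness and support.} Substituting $z=x-y$ and then $t=\rho-1$ in the radial integral rewrites the operator as $\mathcal B(f)(x)=\int_{\mathbb R^2}N(x,x-y)\,f(y)\,dy$ with $N(x,z)=z\int_0^\infty\gamma(x+tz)(t+1)\,dt$, and one more change of variable $r=t|z|$ gives
\begin{equation*}
N(x,z)=\frac{z}{|z|^2}\int_0^\infty\gamma\Big(x+r\tfrac{z}{|z|}\Big)r\,dr+\frac{z}{|z|}\int_0^\infty\gamma\Big(x+r\tfrac{z}{|z|}\Big)dr\,.
\end{equation*}
Since $\gamma$ is smooth with compact support, both radial integrals are smooth bounded functions of $(x,z/|z|)$ that vanish unless the ray $\{x+r z/|z|:r\ge0\}$ meets $\supp\gamma$; hence $N$ is smooth off $\{z=0\}$, with $|D_x^kN(x,z)|\le C_k|z|^{-1}$ for every $k$ (the explicit $x$--variable does not affect the $z$--homogeneity) and $|D_zN(x,z)|\le C|z|^{-2}$. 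Writing $\mathcal B(f)(x)=\int N(x,z)f(x-z)\,dz$ and differentiating under the integral, letting $x$--derivatives fall only on the explicit slot of $N$ and on $f(x-z)$, shows $\mathcal B(f)\in C^\infty$. For the support, note that the integrand defining $\mathcal B(f)(x)$ is nonzero only if $y\in\supp f\subset B_1(0)$ and $y+\rho(x-y)\in\supp\gamma\subset B_1(0)$ for some $\rho\ge1$; since $x$ lies on the segment joining $y$ to $y+\rho(x-y)$, convexity of $B_1(0)$ forces $x\in B_1(0)$. Thus $\mathcal B(f)\in C_c^\infty(\mathbb R^2;\mathbb R^2)$ with $\supp\mathcal B(f)\subseteq\overline{B_1(0)}$.

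\emph{The divergence identity (a).} Differentiating the kernel representation yields $\div\mathcal B(f)(x)=\int(\div_xN)(x,x-y)f(y)\,dy+\int N(x,x-y)\cdot\nabla f(y)\,dy$, where $\div_x$ hits only the explicit slot. I would integrate the second term by parts in $y$ over $\{|y-x|>\varepsilon\}$ and send $\varepsilon\downarrow0$. Using $\div_y[N(x,x-y)]=-(\div_zN)(x,x-y)$, the boundary integral over $\{|y-x|=\varepsilon\}$ converges — by the explicit form of $N$ above, whose radial component scales like $|z|^{-1}$ — to $f(x)\int_{S^1}\big(\int_0^\infty\gamma(x+r\omega)r\,dr\big)d\omega=f(x)\int_{\mathbb R^2}\gamma=f(x)$. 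The remaining bulk term merges with the first integral into $\mathrm{p.v.}\int\big[(\div_x+\div_z)N\big](x,x-y)f(y)\,dy$, and a short computation (writing $z\cdot\nabla\gamma(x+tz)=\tfrac{d}{dt}\gamma(x+tz)$ and integrating $\tfrac{d}{dt}[\gamma(x+tz)(t+1)^2]$ by parts) shows that, in dimension two, $(\div_x+\div_z)N(x,z)=-\gamma(x)$, which is bounded and independent of $z$; hence this term equals $-\gamma(x)\int_{\mathbb R^2}f=0$ by the mean-zero hypothesis. Altogether $\div\mathcal B(f)=f$.

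\emph{The $L^p$ bound (b), and the main difficulty.} As $\mathcal B(f)$ vanishes outside $\overline{B_1(0)}$ and $y$ runs over $\supp f\subset B_1(0)$, only $|x-y|\le2$ matters, where $|N(x,x-y)|\le C|x-y|^{-1}$; therefore $|\mathcal B(f)(x)|\le C\,(K*|f|)(x)$ with $K(z)=|z|^{-1}\mathbf 1_{\{|z|\le2\}}\in L^1(\mathbb R^2)$, and Young's inequality gives $\norm{\mathcal B(f)}_{L^p}\le C\norm{K}_{L^1}\norm{f}_{L^p}=C(p)\norm{f}_{L^p}$ for every $p\in(1,\infty)$. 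The only delicate step is the divergence identity: one must carefully account for the constant produced by the boundary layer at the kernel singularity $y=x$ (this is precisely where $\int\gamma=1$ is used, and the reason one obtains $f$ rather than $0$), and justify the limit $\varepsilon\downarrow0$ using the bounds $|N|\lesssim|z|^{-1}$ and $|D_zN|\lesssim|z|^{-2}$ from the first step. The remaining steps are routine.
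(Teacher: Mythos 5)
Your argument is correct, and since the paper itself offers no proof of this lemma (it simply cites Galdi), what you wrote is exactly the classical Bogovskii argument the paper appeals to: the support claim via convexity of $B_1(0)$, the kernel identity $(\div_x+\div_z)N(x,z)=-\gamma(x)$ — which indeed holds precisely in dimension two, consistent with the weight $\rho$ (rather than $\rho^{d-1}$) in the paper's definition — the boundary-layer term at the singularity producing $f(x)\int_{\R^2}\gamma=f(x)$, and the mean-zero hypothesis annihilating the bulk term. Your treatment of (b) is also a correct (and welcome) simplification: because only the zeroth-order estimate is claimed, no Calder\'on--Zygmund theory is needed, and the weakly singular bound $|N(x,x-y)|\le C|x-y|^{-1}$ on the relevant bounded region together with Young's inequality yields $\|\mathcal{B}(f)\|_{L^p}\le C\|f\|_{L^p}$ with a constant uniform in $p$, in fact for all $p\in[1,\infty]$.
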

Given a function $f\in C^\infty_c(\R^2)$ with zero mean supported on $B_r(x_0)$, we define
\begin{equation}
    v(x) = r \mathcal{B}(f( x_0+r \cdot))\left(\frac{x-x_0}{r}\right) \, .
\end{equation}
By Lemma \ref{lemma:BogovskiiI} applied to $f( x_0+r \cdot) \in C^\infty_c(B_1)$, we know that $\supp v \subset B_r(x_0)$, $\div (v) = f$, and
\begin{equation}
    \| v \|_{L^p} \le C(p) r \| f \|_{L^p} \, ,
    \quad \text{for every $p\in (1,\infty)$}\, .
\end{equation}
By applying the previous argument to velocity fields we deduce the following.

\begin{proposition}[Compactly Supported Anti-divergence]\label{prop:Bogovskii2}
    Assume that $v\in C^\infty(\R^2;\R^2)$ is supported in $B_r(x_0)$ and $\int_{\R^2} v(x) \, dx =0$. Then there exists $A\in C^\infty(\R^2; \R^{2\times 2})$ such that
    \begin{itemize}
        \item[(a)] $\supp A \subset B_r(x_0)$,

        \item[(b)] 
        $\div (A) = v$,
        
        \item[(c)] for every $p\in (1,\infty)$ it holds
        \begin{equation}
            \| A \|_{L^p} \le C(p) r \| v \|_{L^p} \, .
        \end{equation}
    \end{itemize}
\end{proposition}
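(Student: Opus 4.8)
The plan is to reduce the vector-valued statement to the scalar Bogovskii theory recalled above, applied componentwise. Write $v=(v_1,v_2)$; since $\int_{\R^2} v\,dx=0$, each scalar component $v_j\in C^\infty_c(\R^2)$ has zero mean and is supported in $B_r(x_0)$. First I would invoke the rescaled Bogovskii construction stated just above (Lemma~\ref{lemma:BogovskiiI} together with the dilation that takes the unit-ball operator $\mathcal{B}$ to a ball of radius $r$ centered at $x_0$) to produce, for each $j\in\{1,2\}$, a vector field $A^j\in C^\infty(\R^2;\R^2)$ with $\supp A^j\subset B_r(x_0)$, $\div A^j=v_j$, and $\|A^j\|_{L^p}\le C(p)\,r\,\|v_j\|_{L^p}$ for every $p\in(1,\infty)$.

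Next I would assemble the tensor $A$ by taking $A^j$ as its $j$-th row, i.e.\ $A_{jk}:=(A^j)_k$. With the convention $(\div A)_j=\sum_k \partial_k A_{jk}$ this gives $(\div A)_j=\div A^j=v_j$, hence $\div A=v$, which is (b). Property (a) is immediate from $\supp A\subseteq\bigcup_{j}\supp A^j\subset B_r(x_0)$, and $A$ inherits smoothness from the $A^j$. Finally, summing the componentwise $L^p$ bounds,
\begin{equation}
\|A\|_{L^p}\le \sum_{j=1}^2\|A^j\|_{L^p}\le C(p)\,r\sum_{j=1}^2\|v_j\|_{L^p}\le C(p)\,r\,\|v\|_{L^p},
\end{equation}
where the last inequality uses the equivalence of the $\ell^1$ and $\ell^2$ norms on $\R^2$; this establishes (c).

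I do not expect a genuine obstacle here, since the statement merely repackages the classical Bogovskii estimate into a tensor-valued form. The only points that require a little care are the bookkeeping of the divergence convention for matrix fields (whether one builds $A$ from its rows or its columns, so that $\div A=v$ rather than its transpose) and checking that passing from the unit ball to $B_r(x_0)$ scales the $L^p$ norm by exactly one power of $r$ — which is transparent from the explicit change of variables in the rescaled construction, as $\mathcal{B}$ is set up on $B_1(0)$ and the dilation contributes a single factor of $r$. Note that no symmetry of $A$ is claimed, so no further correction is needed.
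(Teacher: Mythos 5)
Your proposal is correct and follows essentially the same route as the paper: the paper also rescales the unit-ball Bogovskii operator of Lemma~\ref{lemma:BogovskiiI} to $B_r(x_0)$ (picking up exactly one factor of $r$ in the $L^p$ bound) and then applies this scalar construction componentwise "to velocity fields" to assemble the tensor $A$. Your extra care about the row/column convention for $\div A$ and the absence of any symmetry claim matches the paper's remark following the proposition.
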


\begin{remark}[A necessary condition for the symmetry of $A$]
    The tensor $A$ built in Proposition \ref{prop:Bogovskii2} is not necessarily symmetric. A necessary condition for symmetry is 
    \begin{align}
    \label{Appendix A: extra cond. for symmetry}
        \int_{\R^2} (x_1 v_2(x) - x_2 v_1(x)) \, dx = 0 \, .
    \end{align}
\end{remark}

\subsection{Symmetric Anti-divergence}

On the torus $\mathbb{T}^2$, we consider the operator
\begin{equation}\label{eq:R0}
    \mathcal{R}_0(v) = (D \Delta^{-1} + (D \Delta^{-1})^t - I \cdot \div \Delta^{-1})(v)
\end{equation}
for every $v\in C^\infty(\mathbb{T}^2;\R^2)$ such that $\int_{\mathbb{T}^2} v(x)\, dx = 0$.
It turns out that 
\begin{equation}
   \mathcal{R}_0 : C^\infty(\mathbb{T}^2; \R^2) \to C^\infty (\mathbb{T}^2; {\rm Sym}_2) \, ,  
\end{equation}
where ${\rm Sym}_2$ is the space of symmetric tensors in $\R^2$. It is immediate to check that $\div(\mathcal{R}_0(v)) = v$ and that $D \mathcal{R}_0$ and $\mathcal{R}_0 \div$ are Calderon-Zygmund operators. In particular, the following estimates hold for every $p\in (1,\infty)$, $v\in C^\infty(\mathbb{T}^2; \R^{2})$, $A\in C^\infty(\mathbb{T}^2; \R^{2\times 2})$
\begin{align}
    \| \mathcal{R}_0(v) \|_{L^p} \le C(p) \| D \mathcal{R}_0(v) \|_{L^p} \le C(p) \| v \|_{L^p} \, , \quad 
    \| \mathcal{R}_0 \div (A) \|_{L^p} \le C(p) \| A \|_{L^p} .\label{eq:R0est2}
\end{align}
As a consequence of \eqref{eq:R0est2}, we have the following.

\begin{proposition}[Symmetric Anti-divergence of compactly supported vector fields on the torus]
\label{prop:antidiv compact}
Let $0<r<1/4$. Assume that $v\in C^\infty_c(\mathbb{T}^2;\R^2)$ is supported on an ball or radius $r$ and $\int_{\mathbb{T}^2}v(x)\, dx =0$. Then,
\begin{equation}
    \|\mathcal{R}_0(v)\|_{L^p} \le C(p) r \| v \|_{L^p} \, ,
    \quad \text{for every $p\in (1,\infty)$}\, .
\end{equation}  
\end{proposition}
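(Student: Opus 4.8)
The plan is to reduce the torus statement to the Euclidean statement of Proposition~\ref{prop:Bogovskii2} by a localization argument, exploiting that the support of $v$ has radius $r<1/4$, so it fits inside a fundamental domain and can be viewed as a compactly supported vector field on $\mathbb{R}^2$. First I would fix the ball $B_r(x_0)\subset\mathbb{T}^2$ containing $\supp v$; since $r<1/4$, we may identify a neighborhood of $\ol{B}_r(x_0)$ with a subset of $\mathbb{R}^2$ and regard $v\in C^\infty_c(\mathbb{R}^2;\mathbb{R}^2)$, still with $\int_{\mathbb{R}^2}v\,dx=0$. Applying Proposition~\ref{prop:Bogovskii2} produces $A_{\mathbb{R}^2}\in C^\infty(\mathbb{R}^2;\mathbb{R}^{2\times 2})$ with $\supp A_{\mathbb{R}^2}\subset B_r(x_0)$, $\div A_{\mathbb{R}^2}=v$, and $\|A_{\mathbb{R}^2}\|_{L^p}\le C(p)\,r\,\|v\|_{L^p}$. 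Because $A_{\mathbb{R}^2}$ is compactly supported strictly inside a fundamental domain, its periodization $A_{\mathrm{per}}$ is a well-defined smooth tensor on $\mathbb{T}^2$ satisfying $\div A_{\mathrm{per}}=v$ on $\mathbb{T}^2$, with the same $L^p$ bound. However, $A_{\mathrm{per}}$ need not be symmetric, whereas $\mathcal{R}_0(v)$ is the specific symmetric anti-divergence we must estimate.

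To bridge this, note that $\mathcal{R}_0(v)-A_{\mathrm{per}}$ is a smooth tensor field on $\mathbb{T}^2$ whose divergence vanishes; I would instead argue directly at the level of $\mathcal{R}_0$. The key observation is the identity $\mathcal{R}_0(v)=\mathcal{R}_0(\div A_{\mathrm{per}})$, and $\mathcal{R}_0\circ\div$ is a Calderón--Zygmund operator by \eqref{eq:R0est2}, so
\begin{equation}
\|\mathcal{R}_0(v)\|_{L^p}=\|\mathcal{R}_0\div(A_{\mathrm{per}})\|_{L^p}\le C(p)\|A_{\mathrm{per}}\|_{L^p}\le C(p)\,r\,\|v\|_{L^p}\,,
\end{equation}
for every $p\in(1,\infty)$. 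This is exactly the claimed bound. The scaling-critical factor $r$ (rather than just a constant) is precisely what is inherited from the Euclidean Bogovskii estimate in Proposition~\ref{prop:Bogovskii2}, via the rescaling $v\mapsto r\,\mathcal{B}(v(x_0+r\,\cdot))((\cdot-x_0)/r)$ carried out there.

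The main obstacle is the passage from $\mathbb{R}^2$ to $\mathbb{T}^2$: one must be careful that the periodization of the compactly supported anti-divergence genuinely has vanishing divergence on the torus (no spurious boundary contributions), which uses that $\supp A_{\mathbb{R}^2}$ lies strictly inside a single period cell — this is where $r<1/4$ enters. A secondary subtlety is that $\mathcal{R}_0(v)$ is not literally $A_{\mathrm{per}}$ but differs by a divergence-free correction; the clean way around this, as above, is to never work with $A_{\mathrm{per}}$ as the final object but only to use it as an auxiliary $L^p$-controlled primitive and then apply the Calderón--Zygmund bound for $\mathcal{R}_0\circ\div$ from \eqref{eq:R0est2}. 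Everything else is routine.
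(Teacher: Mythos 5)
Your proposal is correct and follows essentially the same route as the paper: identify $v$ with a compactly supported mean-free field on $\mathbb{R}^2$ (possible since $r<1/4$), apply the Bogovskii estimate of Proposition~\ref{prop:Bogovskii2} to obtain a compactly supported anti-divergence $A$ with $\|A\|_{L^p}\le C(p)\,r\,\|v\|_{L^p}$, periodize, and conclude via the Calder\'on--Zygmund bound $\|\mathcal{R}_0\div(A)\|_{L^p}\le C(p)\|A\|_{L^p}$ from \eqref{eq:R0est2}. Your extra care about the divergence-free discrepancy between $\mathcal{R}_0(v)$ and the periodized $A$ is handled exactly as in the paper, by never comparing the two tensors directly but writing $\mathcal{R}_0(v)=\mathcal{R}_0\div(A_{\mathrm{per}})$.
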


\begin{proof}
    We first identify $v$ with a velocity field on $\R^2$ supported on a ball of radius $r$ contained in $[0,1]^2$. We invert the divergence by means on the Bogovskii operator as in Proposition \ref{prop:Bogovskii2}, obtaining a compactly supported tensor $A$. We then periodize $A$ and apply \eqref{eq:R0est2}.
\end{proof}







\bibliographystyle{halpha-abbrv}
\bibliography{references.bib}

\newcommand{\etalchar}[1]{$^{#1}$}
\begin{thebibliography}{LFMNL06}
\expandafter\ifx\csname url\endcsname\relax
  \def\url#1{\texttt{#1}}\fi
\expandafter\ifx\csname doi\endcsname\relax
  \def\doi#1{\burlalt{doi:#1}{http://dx.doi.org/#1}}\fi
\expandafter\ifx\csname urlprefix\endcsname\relax\def\urlprefix{URL }\fi
\expandafter\ifx\csname href\endcsname\relax
  \def\href#1#2{#2}\fi
\expandafter\ifx\csname burlalt\endcsname\relax
  \def\burlalt#1#2{\href{#2}{#1}}\fi

\bibitem[ABC{\etalchar{+}}24]{OurLectureNotes}
D.~Albritton, E.~Bru\'e, M.~Colombo, C.~De~Lellis, V.~Giri, M.~Janisch, and H.~Kwon.
\newblock {\em Instability and non-uniqueness for the 2{D} {E}uler equations, after {M}. {V}ishik}, volume 219 of {\em Annals of Mathematics Studies}.
\newblock Princeton University Press, Princeton, NJ, 2024.

\bibitem[Amb04]{Ambrosio04}
L.~Ambrosio.
\newblock Transport equation and {C}auchy problem for {$BV$} vector fields.
\newblock {\em Invent. Math.}, 158(2):227--260, 2004.
\newblock \doi{10.1007/s00222-004-0367-2}.

\bibitem[BC23]{BrueColombo23}
E.~Bru{\'e} and M.~Colombo.
\newblock Nonuniqueness of solutions to the {Euler} equations with vorticity in a {Lorentz} space.
\newblock {\em Commun. Math. Phys.}, 403(2):1171--1192, 2023.
\newblock \doi{10.1007/s00220-023-04816-4}.

\bibitem[BCDL21]{BrueColomboDeLellis21}
E.~Bru\'{e}, M.~Colombo, and C.~De~Lellis.
\newblock Positive solutions of transport equations and classical nonuniqueness of characteristic curves.
\newblock {\em Arch. Ration. Mech. Anal.}, 240(2):1055--1090, 2021.
\newblock \doi{10.1007/s00205-021-01628-5}.

\bibitem[BCK24]{BrueColomboKumar}
E.~Bru{\`e}, M.~Colombo, and A.~Kumar.
\newblock {Sharp Nonuniqueness in the Transport Equation with Sobolev Velocity Field}.
\newblock {\em arXiv preprint arXiv:2405.01670}, 2024.

\bibitem[BCV21]{BuckColomboVicol}
T.~Buckmaster, M.~Colombo, and V.~Vicol.
\newblock {Wild solutions of the Navier--Stokes equations whose singular sets in time have Hausdorff dimension strictly less than 1}.
\newblock {\em Journal of the European Mathematical Society}, 24(9):3333--3378, 2021.

\bibitem[BDLIS15]{BDLISZ15}
T.~Buckmaster, C.~De~Lellis, P.~Isett, and L.~Sz\'ekelyhidi, Jr.
\newblock Anomalous dissipation for {$1/5$}-{H}\"older {E}uler flows.
\newblock {\em Ann. of Math. (2)}, 182(1):127--172, 2015.
\newblock \doi{10.4007/annals.2015.182.1.3}.

\bibitem[BDLS16]{BDLSZ16}
T.~Buckmaster, C.~De~Lellis, and L.~Sz{{\'e}}kelyhidi, Jr.
\newblock Dissipative {E}uler flows with {O}nsager-critical spatial regularity.
\newblock {\em Comm. Pure Appl. Math.}, 69(9):1613--1670, 2016.
\newblock \doi{10.1002/cpa.21586}.

\bibitem[BDLSV19]{BDSV}
T.~Buckmaster, C.~De~Lellis, L.~j. Sz{\'e}kelyhidi, and V.~Vicol.
\newblock Onsager's conjecture for admissible weak solutions.
\newblock {\em Commun. Pure Appl. Math.}, 72(2):229--274, 2019.
\newblock \doi{10.1002/cpa.21781}.

\bibitem[BL15]{BL15}
J.~Bourgain and D.~Li.
\newblock Strong ill-posedness of the incompressible {Euler} equation in borderline {Sobolev} spaces.
\newblock {\em Invent. Math.}, 201(1):97--157, 2015.
\newblock \doi{10.1007/s00222-014-0548-6}.

\bibitem[BM24a]{BuckModena24II}
M.~Buck and S.~Modena.
\newblock Compactly supported anomalous weak solutions for 2d euler equations with vorticity in hardy spaces.
\newblock 2024, \burlalt{2405.19214}{http://arxiv.org/abs/2405.19214}.

\bibitem[BM24b]{BuckModena24}
M.~Buck and S.~Modena.
\newblock Non-uniqueness and energy dissipation for 2d euler equations with vorticity in hardy spaces.
\newblock {\em Journal of Mathematical Fluid Mechanics}, 26(26), 2024.
\newblock \doi{10.1007/s00021-024-00860-9}.

\bibitem[BMNV23]{BMNV23}
T.~Buckmaster, N.~Masmoudi, M.~Novack, and V.~Vicol.
\newblock {\em Intermittent convex integration for the 3{D} {E}uler equations}, volume 217 of {\em Annals of Mathematics Studies}.
\newblock Princeton University Press, Princeton, NJ, [2023] \copyright 2023.

\bibitem[BS21]{BrSh21}
A.~Bressan and W.~Shen.
\newblock A posteriori error estimates for self-similar solutions to the {E}uler equations.
\newblock {\em Discrete Contin. Dyn. Syst.}, 41(1):113--130, 2021.
\newblock \doi{10.3934/dcds.2020168}.

\bibitem[Buc15]{Buckmaster15}
T.~Buckmaster.
\newblock Onsager's conjecture almost everywhere in time.
\newblock {\em Comm. Math. Phys.}, 333(3):1175--1198, 2015.
\newblock \doi{10.1007/s00220-014-2262-z}.

\bibitem[BV19a]{BVsurvey19}
T.~Buckmaster and V.~Vicol.
\newblock Convex integration and phenomenologies in turbulence.
\newblock {\em EMS Surv. Math. Sci.}, 6(1-2):173--263, 2019.
\newblock \doi{10.4171/emss/34}.

\bibitem[BV19b]{BV19}
T.~Buckmaster and V.~Vicol.
\newblock Nonuniqueness of weak solutions to the {Navier}-{Stokes} equation.
\newblock {\em Ann. Math. (2)}, 189(1):101--144, 2019.
\newblock \doi{10.4007/annals.2019.189.1.3}.

\bibitem[BV21]{BVsurvey21}
T.~Buckmaster and V.~Vicol.
\newblock Convex integration constructions in hydrodynamics.
\newblock {\em Bull. Amer. Math. Soc. (N.S.)}, 58(1):1--44, 2021.
\newblock \doi{10.1090/bull/1713}.

\bibitem[CCFS08]{CCFS08}
A.~Cheskidov, P.~Constantin, S.~Friedlander, and R.~Shvydkoy.
\newblock Energy conservation and {Onsager}'s conjecture for the {Euler} equations.
\newblock {\em Nonlinearity}, 21(6):1233--1252, 2008.
\newblock \doi{10.1088/0951-7715/21/6/005}.

\bibitem[CCS21]{CiampaCrippaSpirito21}
G.~Ciampa, G.~Crippa, and S.~Spirito.
\newblock Strong convergence of the vorticity for the 2{D} {E}uler equations in the inviscid limit.
\newblock {\em Arch. Ration. Mech. Anal.}, 240(1):295--326, 2021.
\newblock \doi{10.1007/s00205-021-01612-z}.

\bibitem[CDE22]{ConstantinDrivasElgindi22}
P.~Constantin, T.~D. Drivas, and T.~M. Elgindi.
\newblock Inviscid limit of vorticity distributions in the {Y}udovich class.
\newblock {\em Comm. Pure Appl. Math.}, 75(1):60--82, 2022.
\newblock \doi{10.1002/cpa.21940}.

\bibitem[CFLS16]{CLL16}
A.~Cheskidov, M.~C.~L. Filho, H.~J.~N. Lopes, and R.~Shvydkoy.
\newblock Energy conservation in two-dimensional incompressible ideal fluids.
\newblock {\em Comm. Math. Phys.}, 348(1):129--143, 2016.
\newblock \doi{10.1007/s00220-016-2730-8}.

\bibitem[Cho13]{Choffrut13}
A.~Choffrut.
\newblock {{\(h\)}}-principles for the incompressible {Euler} equations.
\newblock {\em Arch. Ration. Mech. Anal.}, 210(1):133--163, 2013.
\newblock \doi{10.1007/s00205-013-0639-3}.

\bibitem[CL21]{CheskidovLuo3}
A.~Cheskidov and X.~Luo.
\newblock Nonuniqueness of weak solutions for the transport equation at critical space regularity.
\newblock {\em Ann. PDE}, 7(1):Paper No. 2, 45, 2021.
\newblock \doi{10.1007/s40818-020-00091-x}.

\bibitem[CL22]{cheskidov2022sharp}
A.~Cheskidov and X.~Luo.
\newblock Sharp nonuniqueness for the {N}avier-{S}tokes equations.
\newblock {\em Invent. Math.}, 229(3):987--1054, 2022.
\newblock \doi{10.1007/s00222-022-01116-x}.

\bibitem[CL23]{CheskidovLuo2}
A.~Cheskidov and X.~Luo.
\newblock {$L^2$}-critical nonuniqueness for the 2{D} {N}avier-{S}tokes equations.
\newblock {\em Ann. PDE}, 9(2):Paper No. 13, 56, 2023.
\newblock \doi{10.1007/s40818-023-00154-9}.

\bibitem[CLJ12]{CDS12}
A.~Choffrut, C.~D. Lellis, and L.~S. Jr.
\newblock Dissipative continuous euler flows in two and three dimensions, 2012, \burlalt{1205.1226}{http://arxiv.org/abs/1205.1226}.

\bibitem[CMZO24]{Cordoba2024}
D.~C{\'o}rdoba, L.~Mart{\'i}nez-Zoroa, and W.~O{\.z}a{\'n}ski.
\newblock Instantaneous gap loss of sobolev regularity for the 2d incompressible euler equations.
\newblock {\em Duke Math. J.}, 2024, \burlalt{arXiv:2210.17458}{http://arxiv.org/abs/arXiv:2210.17458}.
\newblock to appear.

\bibitem[CS]{CS21}
G.~Crippa and G.~Stefani.
\newblock An elementary proof of existence and uniqueness for the euler flow in localized yudovich space.
\newblock {\em To appear on Calc. Var. Partial Differential Equations}.

\bibitem[CS15]{CrippaSpirito15}
G.~Crippa and S.~Spirito.
\newblock Renormalized solutions of the 2{D} {E}uler equations.
\newblock {\em Comm. Math. Phys.}, 339(1):191--198, 2015.
\newblock \doi{10.1007/s00220-015-2411-z}.

\bibitem[Del91]{Delort91}
J.-M. Delort.
\newblock Existence de nappes de tourbillon en dimension deux.
\newblock {\em J. Amer. Math. Soc.}, 4(3):553--586, 1991.
\newblock \doi{10.2307/2939269}.

\bibitem[DL89]{DiPernaLions}
R.~J. DiPerna and P.-L. Lions.
\newblock Ordinary differential equations, transport theory and {S}obolev spaces.
\newblock {\em Invent. Math.}, 98(3):511--547, 1989.
\newblock \doi{10.1007/BF01393835}.

\bibitem[DLS09]{DeLellisSzekelyhidi09}
C.~De~Lellis and L.~Sz{\'e}kelyhidi, Jr.
\newblock The {E}uler equations as a differential inclusion.
\newblock {\em Ann. of Math. (2)}, 170(3):1417--1436, 2009.
\newblock \doi{10.4007/annals.2009.170.1417}.

\bibitem[DLS13]{DeLellisSzekelyhidi13}
C.~De~Lellis and L.~Sz{{\'e}}kelyhidi, Jr.
\newblock Dissipative continuous {E}uler flows.
\newblock {\em Invent. Math.}, 193(2):377--407, 2013.
\newblock \doi{10.1007/s00222-012-0429-9}.

\bibitem[DLS17]{DSsurvey17}
C.~De~Lellis and L.~Sz\'{e}kelyhidi, Jr.
\newblock High dimensionality and h-principle in {PDE}.
\newblock {\em Bull. Amer. Math. Soc. (N.S.)}, 54(2):247--282, 2017.
\newblock \doi{10.1090/bull/1549}.

\bibitem[DLS19]{DSsurvey19}
C.~De~Lellis and L.~Sz\'ekelyhidi, Jr.
\newblock On turbulence and geometry: from {N}ash to {O}nsager.
\newblock {\em Notices Amer. Math. Soc.}, 66(5):677--685, 2019.

\bibitem[DLS22]{DSsurvey22}
C.~De~Lellis and L.~Sz\'{e}kelyhidi, Jr.
\newblock Weak stability and closure in turbulence.
\newblock {\em Philos. Trans. Roy. Soc. A}, 380(2218):Paper No. 20210091, 16, 2022.
\newblock \doi{10.1098/rsta.2021.0091}.

\bibitem[DM87]{DPM87}
R.~J. DiPerna and A.~J. Majda.
\newblock Concentrations in regularizations for {$2$}-{D} incompressible flow.
\newblock {\em Comm. Pure Appl. Math.}, 40(3):301--345, 1987.
\newblock \doi{10.1002/cpa.3160400304}.

\bibitem[DS17]{DSZ17}
S.~Daneri and L.~Sz{{\'e}}kelyhidi, Jr.
\newblock Non-uniqueness and h-principle for {H}\"older-continuous weak solutions of the {E}uler equations.
\newblock {\em Arch. Rational Mech. Anal.}, 224(2):471--514, 2017.

\bibitem[Elg21]{ElgindiAnnals}
T.~Elgindi.
\newblock Finite-time singularity formation for {{\(C^{1,\alpha}\)}} solutions to the incompressible euler equations on {{\(\mathbb{R}^3\)}}.
\newblock {\em Ann. Math. (2)}, 194(3):647--727, 2021.
\newblock \doi{10.4007/annals.2021.194.3.2}.

\bibitem[EM94]{EvansMuller94}
L.~C. Evans and S.~M\"{u}ller.
\newblock Hardy spaces and the two-dimensional {E}uler equations with nonnegative vorticity.
\newblock {\em J. Amer. Math. Soc.}, 7(1):199--219, 1994.
\newblock \doi{10.2307/2152727}.

\bibitem[EM20]{EM20}
T.~M. Elgindi and N.~Masmoudi.
\newblock {{\(L^\infty\)}} ill-posedness for a class of equations arising in hydrodynamics.
\newblock {\em Arch. Ration. Mech. Anal.}, 235(3):1979--2025, 2020.
\newblock \doi{10.1007/s00205-019-01457-7}.

\bibitem[Eyi01]{Eyink01}
G.~L. Eyink.
\newblock Dissipation in turbulent solutions of 2d {Euler} equations.
\newblock {\em Nonlinearity}, 14(4):787--802, 2001.
\newblock \doi{10.1088/0951-7715/14/4/307}.

\bibitem[Gal11]{galdi2011introduction}
G.~Galdi.
\newblock {\em {An introduction to the mathematical theory of the Navier-Stokes equations: Steady-state problems}}.
\newblock Springer Science \& Business Media, 2011.

\bibitem[GP22]{GP22}
F.~Grotto and U.~Pappalettera.
\newblock Correction to: {B}urst of point vortices and non-uniqueness of 2{D} {E}uler equations.
\newblock {\em Arch. Ration. Mech. Anal.}, 246(1):139--140, 2022.
\newblock \doi{10.1007/s00205-022-01814-z}.

\bibitem[GR24]{GiriRadu23}
V.~Giri and R.-O. Radu.
\newblock The 2d {O}nsager conjecture: a {N}ewton--{N}ash iteration.
\newblock 2024, \burlalt{2405.19214}{http://arxiv.org/abs/2405.19214}.

\bibitem[H\"33]{Holder33}
E.~H\"{o}lder.
\newblock \"{U}ber die unbeschr\"{a}nkte {F}ortsetzbarkeit einer stetigen ebenen {B}ewegung in einer unbegrenzten inkompressiblen {F}l\"{u}ssigkeit.
\newblock {\em Math. Z.}, 37(1):727--738, 1933.
\newblock \doi{10.1007/BF01474611}.

\bibitem[Ise18]{Isett2018Annals}
P.~Isett.
\newblock A proof of {O}nsager's conjecture.
\newblock {\em Ann. of Math. (2)}, 188(3):871--963, 2018.
\newblock \doi{10.4007/annals.2018.188.3.4}.

\bibitem[Ise22]{Isett22}
P.~Isett.
\newblock Nonuniqueness and existence of continuous, globally dissipative {Euler} flows.
\newblock {\em Arch. Ration. Mech. Anal.}, 244(3):1223--1309, 2022.
\newblock \doi{10.1007/s00205-022-01780-6}.

\bibitem[KGN23]{Kwon2023}
H.~Kwon, V.~Giri, and M.~Novack.
\newblock A wavelet-inspired \(l^3\)-based convex integration framework for the euler equations.
\newblock arXiv:2305.18142, 2023.
\newblock \urlprefix\url{https://arxiv.org/abs/2305.18142}.

\bibitem[Lam24]{lamb1924hydrodynamics}
H.~Lamb.
\newblock {\em Hydrodynamics}.
\newblock University Press, 1924.

\bibitem[LFMNL06]{MMN06}
M.~C. Lopes~Filho, A.~L. Mazzucato, and H.~J. Nussenzveig~Lopes.
\newblock Weak solutions, renormalized solutions and enstrophy defects in 2{D} turbulence.
\newblock {\em Arch. Ration. Mech. Anal.}, 179(3):353--387, 2006.
\newblock \doi{10.1007/s00205-005-0390-5}.

\bibitem[LMPP21]{LMP21}
S.~Lanthaler, S.~Mishra, and C.~Par{\'e}s-Pulido.
\newblock On the conservation of energy in two-dimensional incompressible flows.
\newblock {\em Nonlinearity}, 34(2):1084--1135, 2021.
\newblock \doi{10.1088/1361-6544/abb452}.

\bibitem[Loe06]{Loeper06}
G.~Loeper.
\newblock Uniqueness of the solution to the {V}lasov-{P}oisson system with bounded density.
\newblock {\em J. Math. Pures Appl. (9)}, 86(1):68--79, 2006.
\newblock \doi{10.1016/j.matpur.2006.01.005}.

\bibitem[Maj93]{Majda93}
A.~J. Majda.
\newblock Remarks on weak solutions for vortex sheets with a distinguished sign.
\newblock {\em Indiana Univ. Math. J.}, 42(3):921--939, 1993.
\newblock \doi{10.1512/iumj.1993.42.42043}.

\bibitem[Mas07]{Masmudi07}
N.~Masmoudi.
\newblock Remarks about the inviscid limit of the {N}avier-{S}tokes system.
\newblock {\em Comm. Math. Phys.}, 270(3):777--788, 2007.
\newblock \doi{10.1007/s00220-006-0171-5}.

\bibitem[MB02]{MajdaBertozzi02}
A.~J. Majda and A.~L. Bertozzi.
\newblock {\em Vorticity and incompressible flow}, volume~27 of {\em Cambridge Texts in Applied Mathematics}.
\newblock Cambridge University Press, Cambridge, 2002.

\bibitem[Men23]{Mengual23}
F.~Mengual.
\newblock Non-uniqueness of admissible solutions for the 2{D} {E}uler equation with \(l^p\) vortex data.
\newblock {\em arXiv preprint}, 2023, \burlalt{2304.09578}{http://arxiv.org/abs/2304.09578}.
\newblock \urlprefix\url{https://doi.org/10.48550/arXiv.2304.09578}.

\bibitem[MP94]{MarchioroPulvirenti94}
C.~Marchioro and M.~Pulvirenti.
\newblock {\em Mathematical theory of incompressible nonviscous fluids}, volume~96 of {\em Applied Mathematical Sciences}.
\newblock Springer-Verlag, New York, 1994.
\newblock \doi{10.1007/978-1-4612-4284-0}.

\bibitem[MS18]{MS18}
S.~Modena and L.~Sz\'{e}kelyhidi, Jr.
\newblock Non-uniqueness for the transport equation with {S}obolev vector fields.
\newblock {\em Ann. PDE}, 4(2):Paper No. 18, 38, 2018.
\newblock \doi{10.1007/s40818-018-0056-x}.

\bibitem[Mv03]{MullerSverak03}
S.~M\"{u}ller and V.~\v{S}ver\'{a}k.
\newblock Convex integration for {L}ipschitz mappings and counterexamples to regularity.
\newblock {\em Ann. of Math. (2)}, 157(3):715--742, 2003.
\newblock \doi{10.4007/annals.2003.157.715}.

\bibitem[MVH94]{meleshko1994chaplygin}
V.~Meleshko and G.~Van~Heijst.
\newblock {On Chaplygin's investigations of two-dimensional vortex structures in an inviscid fluid}.
\newblock {\em Journal of Fluid Mechanics}, 272:157--182, 1994.

\bibitem[Nas54]{Nash}
J.~Nash.
\newblock {$C^1$} isometric imbeddings.
\newblock {\em Ann. of Math. (2)}, 60:383--396, 1954.
\newblock \doi{10.2307/1969840}.

\bibitem[NV23]{NovackVicol23}
M.~Novack and V.~Vicol.
\newblock An intermittent onsager theorem.
\newblock {\em Inventiones Mathematicae}, 233:223--323, 2023.
\newblock \doi{10.1007/s00222-023-01185-6}.

\bibitem[RP24]{derosaPark24}
L.~D. Rosa and J.~Park.
\newblock No anomalous dissipation in two-dimensional incompressible fluids, 2024, \burlalt{2403.04668}{http://arxiv.org/abs/2403.04668}.

\bibitem[Sch93]{Scheffer93}
V.~Scheffer.
\newblock An inviscid flow with compact support in space-time.
\newblock {\em J. Geom. Anal.}, 3(4):343--401, 1993.
\newblock \doi{10.1007/BF02921318}.

\bibitem[Shn00]{Shnirelman00}
A.~Shnirelman.
\newblock Weak solutions with decreasing energy of incompressible {E}uler equations.
\newblock {\em Comm. Math. Phys.}, 210(3):541--603, 2000.
\newblock \doi{10.1007/s002200050791}.

\bibitem[Visa]{Vis18a}
M.~Vishik.
\newblock Instability and non-uniqueness in the cauchy problem for the euler equations of an ideal incompressible fluid. part i.
\newblock {\em arxiv:1805.09426}, \burlalt{1805.09426}{http://arxiv.org/abs/1805.09426}.
\newblock \urlprefix\url{https://arxiv.org/pdf/1805.09426.pdf}.

\bibitem[Visb]{Vis18}
M.~Vishik.
\newblock Instability and non-uniqueness in the cauchy problem for the euler equations of an ideal incompressible fluid. part ii.
\newblock {\em arxiv:1805.09440}, \burlalt{1805.09440}{http://arxiv.org/abs/1805.09440}.
\newblock \urlprefix\url{https://arxiv.org/pdf/1805.09440.pdf}.

\bibitem[Vis99]{Vishik99}
M.~Vishik.
\newblock Incompressible flows of an ideal fluid with vorticity in borderline spaces of {B}esov type.
\newblock {\em Ann. Sci. \'{E}cole Norm. Sup. (4)}, 32(6):769--812, 1999.
\newblock \doi{10.1016/S0012-9593(00)87718-6}.

\bibitem[VW93]{VecchiWu}
I.~Vecchi and S.~J. Wu.
\newblock On {$L^1$}-vorticity for {$2$}-{D} incompressible flow.
\newblock {\em Manuscripta Math.}, 78(4):403--412, 1993.
\newblock \doi{10.1007/BF02599322}.

\bibitem[Wol33]{Wolibner33}
W.~Wolibner.
\newblock Un theor\`eme sur l'existence du mouvement plan d'un fluide parfait, homog\`ene, incompressible, pendant un temps infiniment long.
\newblock {\em Math. Z.}, 37(1):698--726, 1933.
\newblock \doi{10.1007/BF01474610}.

\bibitem[Yud62]{Yud62}
V.~I. Yudovich.
\newblock Some bounds for solutions of elliptic equations.
\newblock {\em Mat. Sb. (N.S.)}, 59 (101)(suppl.):229--244, 1962.
\newblock \urlprefix\url{https://mathscinet.ams.org/mathscinet-getitem?mr=0149062}.

\bibitem[Yud63]{Yud63}
V.~I. Yudovich.
\newblock Non-stationary flows of an ideal incompressible fluid.
\newblock {\em \v Z. Vy\v cisl. Mat i Mat. Fiz.}, 3:1032--1066, 1963.

\end{thebibliography}

\end{document}